\documentclass[10pt]{article}
\usepackage{latexsym,amsfonts,amsmath,amssymb,array,tabularx,theorem}
\usepackage{graphicx}
\usepackage[normalem]{ulem}
\usepackage[pdfpagelabels]{hyperref}
\usepackage{bbm}
\usepackage{authblk}
\usepackage{bm}
\usepackage{mathtools} %\bigtimes
\usepackage{dsfont} %\mathbbm{1}
\usepackage{algpseudocode}

\usepackage{comment}

\DeclareFontFamily{OT1}{pzc}{}
\DeclareFontShape{OT1}{pzc}{m}{it}{<-> s * [1.10] pzcmi7t}{}
\DeclareMathAlphabet{\mathpzc}{OT1}{pzc}{m}{it}

\usepackage{float}
\setlength{\bigskipamount}{5ex plus1.5ex minus 2ex}
\setlength{\textheight}{22cm} \setlength{\textwidth}{15.5cm}
\setlength{\hoffset}{-1.3cm} \setlength{\voffset}{-1.8cm}

\newtheorem{theorem}{Theorem}[section]
\newtheorem{lemma}[theorem]{Lemma}
\newtheorem{corollary}[theorem]{Corollary}
\newtheorem{proposition}[theorem]{Proposition}
\newtheorem{remark}[theorem]{Remark}
\newtheorem{assumption}[theorem]{Assumption}

\newenvironment{proof}{\begin{trivlist}
    \item[\hskip\labelsep{\bf Proof.}]}{$\hfill\Box$\end{trivlist}}

{\theoremstyle{plain} \theorembodyfont{\rmfamily}
}

 % Macro for unified spelling of Tschebyscheff
 % complex nr i
 % \dd x=dx

\newcommand{\bbR}{\mathbb{R}}

\newcommand{\bbN}{\mathbb{N}}

\newcommand{\bbE}{\mathbb{E}}
\newcommand{\bbF}{\mathbb{F}}
\newcommand{\bbP}{\mathbb{P}}

\newcommand{\calF}{\mathcal{F}}
\newcommand{\calO}{\mathcal{O}}

\newcommand{\calN}{\mathcal{N}}

\newcommand{\calU}{\mathcal{U}}

\newcommand{\mask}[1]{{}}
% color coding for modifications
\usepackage[usenames]{color}
\definecolor{dkgreen}{rgb}{0.0, 0.5, 0.0}

%%%%%%%%%%%%%%%%%%%%%%%%%%%%%%%%%%%%%%%%%%%%%%%%%%%%%%%%%%%%%%%5

%%%%%%%%%%%%%%%%%%%%%%%%%%%%%%%%%%%%%%%%%%%%%%%%%%%%%%%%%%%%%%

%
\newcommand{\be}{\begin{equation}}
\newcommand{\ee}{\end{equation}}
\newcommand{\bea}{\begin{eqnarray}}
\newcommand{\eea}{\end{eqnarray}}
\newcommand{\beas}{\begin{eqnarray*}}
\newcommand{\eeas}{\end{eqnarray*}}

\DeclareMathOperator*{\argmin}{argmin}
%
% networks --- we can change notation when desired

 % Parallelization

        % Parallelization of networks with different inputs

 % auxiliary networks
%\DeclareMathOperator{\first}{fi}
%\DeclareMathOperator{\last}{la}

 % network approximation partial sum Legendre pol
 % network emulation of a linear combination

\begin{document}

\bibliographystyle{abbrv}
\title {
Deep neural network approximation for
high-dimensional
parabolic
Hamilton--Jacobi--Bellman equations
}
\author[$\dagger$,$\ddagger$]{Philipp Grohs}

\author[$\ddagger$]{Lukas Herrmann} 

\affil[$\dagger$]{\footnotesize Faculty of Mathematics, University of Vienna, 
%\newline
Oskar-Morgenstern-Platz 1,
1090 Vienna, Austria. 
\newline
\texttt{philipp.grohs@univie.ac.at}}

\affil[$\ddagger$]{\footnotesize Johann Radon Institute for Computational and Applied Mathematics, 
Austrian Academy of Sciences, 
%\newline
Altenbergerstrasse 69,
 4040 Linz,
   Austria.  
   \newline
   \texttt{$\{$philipp.grohs,lukas.herrmann$\}$@ricam.oeaw.ac.at}}
\maketitle
\date{}
\begin{abstract}
The approximation of solutions to second order Hamilton--Jacobi--Bellman (HJB) equations 
by deep neural networks is investigated.
It is shown that for HJB equations that arise in the context of the optimal control of certain Markov processes
the solution can be approximated by deep neural networks 
without incurring the curse of dimension.
The dynamics is assumed to depend affinely on the controls 
and the cost depends quadratically on the controls.
The admissible controls take values in a bounded set.
\end{abstract}

\noindent
{\bf Key words:} High dimensional Approximation, Neural Network Approximation, Stochastic Optimal Control

\noindent
{\bf Subject Classification:} 65C99, 65M99, 60H30

\section{Introduction}

Deep neural networks (DNNs) and optimization techniques are increasingly used in the search for approximation methods for high-dimensional
problems in scientific computing.
In the case of a partial differential equation (PDE), the approximation of the solution is challenging, since grid based approaches are burdened by the so called curse of dimension
\cite{Bellman1957}.
By this we mean that in order to achieve an accuracy $\varepsilon>0$, the computational cost has asymptotically 
an exponential dependence with respect to the dimension of the spatial domain of the PDE.
As a result practical computations on dimensions larger than 4 or 6 become already very computationally intensive.

A particularly relevant example of high dimensional PDEs are Hamilton--Jacobi--Bellman (HJB) equations associated with stochastic optimal control problems which are relevant for example in engineering and 
financial modelling. 
Due to the curse of dimension, numerical methods to solve the HJB equation, 
which allows to obtain optimal feedback control using the value function and the gradient,
are usually restricted to small dimensions.
Also, the non-linearity in the HJB equation typically with respect to the gradient of the value function
constitutes a challenging obstruction.
Computational technique based on DNNs and deep learning receive increasing attention recently~\cite{HJE_2018,4267720,nakamurazimmerer2020adaptive,jiang2017using}. 
However, it is still an open question, whether the architecture of DNNs is rich enough to 
overcome the curse of dimension to solve high-dimensional (stochastic) optimal control problems.
If this is in fact the case, the mentioned computational approaches are meaningful.
In this work, we provide a proof that this is indeed possible in the cases where the dynamics depend affinely on the controls 
and the dependence of the controls in the cost function is quadratic, where admissible values of controls are bounded.

There has been vivid research in the approximation of solutions to high-dimensional PDEs with DNNs \cite{GHJv18_786,HJKN_2020,grohs2020deep}, where DNN approximation results without curse of dimension has been established. 
However, these works are either focused on linear PDEs or on non-linearities with respect to the solution but not with respect to the gradient of the solution. 
Since in the HJB equations that arise in the search for optimal feedback control for stochastic differential equations, 
the non-linearity appears naturally on the gradient of the solution, new challenges arise in the approximation by DNNs.
In order to accomodated these issues, our proof extends some ideas from~\cite{HJKN_2020} to these cases. 
Moreover the novel sampling techniques proposed in~\cite{HJK_2019} are also an important ingredient in our derivations 
to prove that there exist DNNs that can approximate the value function and the gradient related to certain parabolic HJB equations
without incurring the curse of dimension.

\subsection{Deep neural networks}

Deep neural networks are in this work considered as respresentations of mappings 
between real Euclidean spaces.
They are formed by affine mappings followed by a componentwise applied
non-linear map, which is referred to as the activation function.
We shall restrict the presentation to the case 
of DNNs with activation functions 
of so called rectified linear unit (ReLU) $\sigma_1:\bbR \to \bbR$, 
which is defined by $\sigma_1(x) := \max\{0,x\}$, $x\in \bbR$, and
so called rectified cubic unit (ReCU) $\sigma_3:\bbR \to \bbR$, 
which is defined by $\sigma_3(x) := \max\{0,x\}^3$, $x\in \bbR$.
The activation functions may change in different layers of the DNN.
Also, we restrict the presentation to fully connected DNNs of depth $L\in \bbN$ defined as follows.
Let $(N_i)_{i=0,\ldots,L}$ be a sequence of positive integers. 
Let $A^i \in \bbR^{N_{i} \times N_{i-1}}$ and $b^i\in \bbR^{N_i}$, 
$i=1,\ldots,L$.
We define the realization of the DNN $\phi^L: \bbR^{N_0} \to \bbR^{N_L}$ by 
\begin{equation}\label{eq:def_relu_nn}
\bbR^{N_0}\ni x \mapsto \phi^i(x)
:=
A^i
\sigma^{(i)}(\phi^{i-1}(x)) + b^i, 
\quad i=2,\ldots,L,
\quad 
\text{with}
\quad  
\bbR^{N_0}\ni x \mapsto \phi^1(x)
:= A^1 x + b^1, 
\end{equation}
where with slight abuse of notation we applied that  
$\sigma^{(i)} (x):= (\sigma^{(i)}(x_1),\ldots,\sigma^{(i)}(x_N)) $, $N\in\bbN$, $x\in\bbR^N$, 
with $\sigma^{(i)}\in \{\sigma_1,\sigma_3\}$.
The entries of $(A^i, b^i)_{i=1,\ldots,L}$ are referred to as the weights of the DNN $\phi^L$.
The number of 
non-zero weights is referred to as the size of the DNN $\phi^L$ and will be denoted by ${\rm size}(\phi^L)$. 
The width of the DNN $\phi^L$ is defined by ${\rm width}(\phi^L) = \max\{N_0,\ldots,N_L\}$
and $L$ is the depth of $\phi^L$.
It is sometimes argued in the literature~\cite{EDGB_2019} that one should distinguish between 
the architecture and the realization of a DNN. The reason may be that of course different architectures, 
sets of DNN weights, can result in the identical mapping that is realized in~\eqref{eq:def_relu_nn}.
Here, we do not make this distinction. The reason is that we are studying 
asymptotic upper bounds of the size that achieve a certain accuracy.
Moreover, 
in the following we omit the superscript index $L$ which is the depth of a DNN 
with the hope that this eases the notation.
In this work, we exclusively consider DNNs with ReLU and ReCU
activation function
and will mostly write ReLU DNN or ReCU DNN
in the case that the $\sigma^{(i)}=\sigma_1$
or $\sigma^{(i)}=\sigma_3$, respectively, 
for every $i=2,\ldots,L$.

Throughout this manuscript, we will construct DNNs mostly by composition and addition of already existing DNNs. 
It holds that the sum of a ReLU DNN and ReCU DNN is again in the class of DNNs
defined in~\eqref{eq:def_relu_nn}. 
We shall briefly illustrate how this may be seen in the simple example to find 
a DNN in the class of~\eqref{eq:def_relu_nn} that represents the function
$\bbR\ni x\mapsto \sigma_1(x) + \sigma_3(x)$. 
It holds that there exist ReLU DNNs and ReCU DNNs of constant size
such that the univariate identity function may be represented exactly.
For ReLU DNNs, it holds that $x = \sigma_1(x) - \sigma_1(-x)$ for
every $x\in\bbR$. 
In case of the ReCU DNNs, we exploit the fact that 
$x^3 = \sigma_3(x) - \sigma_3(-x)$ and the identity 
$24x = (x+2)^3 - 2x^3 + (x-2)^3$, $x\in\bbR$, 
which implies that for every $x\in\bbR$,
\begin{equation*}
    x = \frac{1}{24}
    \left( 
    \sigma_3(x+2 ) - \sigma_3(-x-2)
    -2(\sigma_3(x) - \sigma_3(-x))
    +\sigma_3(x-2) - \sigma_3(-x+2)
    \right).
\end{equation*}
The theoretical formalism of \emph{parallelization} of DNNs 
allows to form a DNN that executes a finite number, here two, DNNs in parallel, cf.~\cite[Lemma~II.5]{EDGB_2019}. 
This may be written in column vector notation. 
Thus, the following schematic mappings are all DNNs according to the definition in~\eqref{eq:def_relu_nn},
\begin{equation*}
    \bbR\ni x
    \mapsto 
    \begin{pmatrix}
    x \\ x
    \end{pmatrix}
    \mapsto
    \begin{pmatrix}
    \sigma_1(x)\\ x
    \end{pmatrix}
    \mapsto 
    \begin{pmatrix}
    \sigma_1(x)\\ \sigma_3(x)
    \end{pmatrix}
    \mapsto 
    \sigma_1(x) + \sigma_3(x)
    .
\end{equation*}
The resulting DNN is a composition of 
four DNNs, the second one is a ReLU DNN, and the third one a ReCU DNN.

The asserted upper bounds in later parts of the manuscript on the size of certain DNNs then result
for example by~\cite[Lemmas~II.5 and~II.6]{EDGB_2019}.
Similar statements hold for ReCU DNNs.

\subsection{Parabolic Hamilton--Jacobi--Bellman equations and description of the main result} 
We can now describe our main result.
Let $(\Omega, \bbF,\calF,\bbP)$
be a filtered probability space
and denote the expectation with respect to
$\bbP$ by $\bbE(\cdot)$.
Let $d,\bar{d}\in\bbN$.
We consider the $\bbR^d$-valued Markov process $x(t)$
defined by the stochastic differential equation
\begin{equation}\label{eq:SDE}
 {\rm d} x(t) =
 f(t,x(t),u(t))
 {\rm d}t
 +
 {\rm d} W_t,
 \quad t \in [0,t_f],
 \quad x(0) = 0,
\end{equation}
where $W_t$ is a standard $d$-dimensional Brownian motion, 
$u(t)$ is a progressively measurable process that takes values in the closed set $U\subset \bbR^{\bar{d}}$,
and $f$ is globally Lipschitz continuous.
The process $u(t)$ is referred to as control. 
Let us denote the progessively measurable processes that take values in $U$ by $\calU$.
The resulting optimal control problem is to determine, for each $x\in \bbR^d$ 
a control $u^{\ast}(\cdot , x)\in \calU$ with 
$$
    u^\ast(t,x) \in \argmin_{{\{u(t)\}_{t\in [0,t_f]}\in \calU}}\bbE\left(
    \int_{0}^{t_f}
 L(s,x(s),u(s))
 {\rm d}s
 +\Psi(x(t_f))
 \Big \vert x(0) =x\right)
$$
for suitable cost functions $L$ and $\Psi$ and terminal time $t_f$.

A popular method to determine $u^\ast$ is via the associated Hamilton--Jacobi--Bellman equation. To this end, let 
\begin{equation*}
 J(t,x;u) :=
 \bbE\left( 
 \int_{t}^{t_f}
 L(s,x(s),u(s))
 {\rm d}s
 +\Psi(x(t_f))
 \Big \vert x(t) =x
 \right)
\end{equation*}
and define the \emph{value function}
\begin{equation}\label{eq:value_fctn}
 V(t,x) 
 :=
 \inf_{\{u(t)\}_{t\in [0,t_f]}\in \calU}
 J(t,x;u)
 .
\end{equation}
The value function satisfies the following Hamilton--Jacobi--Bellman equation
with terminal condition
\begin{equation}\label{eq:HJB_PDE}
 \partial_t V(t,x)
 + \frac{1}{2}\Delta V(t,x) + H(t,x,\nabla_x V(t,x))=0
  \text{ on } [0,t_f)\times \bbR^d, 
  \quad V(t_f,x) = \Psi(x) \text{ on } \bbR^d
  ,
\end{equation}
where the \emph{Hamiltonian} $H$ is given by
\begin{equation}\label{eq:Hamiltonian}
 H(t,x,p) 
 :=
 \inf_{v\in U}
 [p^\top f(t,x,v) + L(t,x,v) ]
 .
\end{equation} 
Given a solution $V(t,x)$ of (\ref{eq:HJB_PDE}) an \emph{optimal policy function} can be constructed by choosing an element
\begin{equation}\label{eq:alpha}
    \alpha(t,x) \in \argmin_{v\in U} [\nabla_x V(t,x)^\top f(t,x,v) + L(t,x,v) ]
\end{equation}
and computing
\begin{equation*}
 {\rm d} x^\ast(t) =
 f(t,x^\ast(t),\alpha(t,x^\ast(t)))
 {\rm d}t
 +
 {\rm d} W(t),
 \quad t \in [0,t_f],\quad x^\ast(0) = x.
\end{equation*}
An optimal control $u^\ast$ is then given by
\begin{equation}
    u^\ast(t,x)=\alpha(t,x^\ast(t)).   
\end{equation}
The key to the success of this approach is to have efficient representations of the value function $V$, the gradient $\nabla_x V$, as well as the function $\alpha$ in (\ref{eq:alpha}). The main difficulty lies in the typically high dimension $d$ of the configuration space $\bbR^d$ and the fact that most classical numerical representations, such as finite elements or spectral elements, suffer from the curse of dimension. In the present paper we will show that this impediment is not true for a DNN representation. In particular, we will show that, given suitable assumptions, for each $t\in [0,t_f]$ the value function $V(t,\cdot)$, the gradient $\nabla_x V(t,\cdot)$, as well as the function $\alpha(t,\cdot)$ in (\ref{eq:alpha}) can be represented by DNNs without incurring the curse of dimension, provided that similar approximation results hold for the functions $f,L,\Psi$. 
For more details on stochastic optimal control problems, 
the reader is referred to~\cite{FlemingSoner}.
In the following, we will focus on particular dynamics in~\eqref{eq:SDE} as laid out in the following assumption.
\begin{assumption}\label{ass:lineardynamics}
Assume that the dynamics $f$ depends affinely on the controls and that in the cost function $L$
the dependence with respect to the controls is quadratic.
Specifically, 
let
\begin{equation}\label{eq:affine_controls}
 f(t,x,v) =    
    f_1(t,x) + f_2(t,x)v
    \quad 
    \forall (t,x,v) \in [0,t_f]\times \bbR^d \times U
    ,
\end{equation}
where $f_1:[0,t_f]\times \bbR^d \to \bbR^d$ 
and $f_2:[0,t_f]\times \bbR^d \to \bbR^{d\times \bar{d}}$
are continuous. 
Moreover suppose that 
\begin{equation}\label{eq:contols_in_box}
    U= 
    [a_1,b_1] \times \cdots \times [a_{\bar{d}}, b_{\bar{d}}]
\end{equation}
for real numbers $a_i < b_i$, $i=1,\ldots,\bar{d}$.
The cost function is assumed to have the form for $\gamma>0$
\begin{equation}\label{eq:quadratic_cost}
    L(t,x,v)
    =
    \bar{L}(t,x) + \gamma\|v\|_2^2
    \quad 
    \forall (t,x,v) \in [0,t_f]\times \bbR^d \times U,
\end{equation}
and $L $ is continuous.
Let $\Psi\in C^2(\bbR^d)$.
There exist constants $C,\kappa_1,\kappa_2>0$ such that
for every $d,\bar{d}\in\bbN$,
\begin{enumerate}
    \item $
    \sup_{s\in [0,t_f]}\sup_{x\in\bbR^d}
    \|f_1(s,x)\|_1 
    + 
    \|f_2(s,x)\|_1 
     \leq 
     C
    ,$
    \item 
    $\sup_{s\in [0,t_f]}\|f_1(s,x)
    -f_1(s,x')\|_2
    \leq C\|x-x'\|_2$,
    \item 
    $\sup_{s\in [0,t_f]}\|f_2(s,x)
    -f_2(s,x')\|_2
    \leq C d^{\kappa_1} \bar{d}^{\kappa_2}\|x-x'\|_2$,
    \item 
    $\sup_{x\in\bbR^d}|\Psi(x)|
    +
    \sup_{s\in [0,t_f]}\sup_{x\in\bbR^d}
    (\bar{L}(s,x)
    +
    \|\nabla_x \bar{L}(s,x)\|_2)
    \leq 
    C d^{\kappa_1} \bar{d}^{\kappa_2} $,
    \item $\sup_{x\in\bbR^d} \|\nabla_x \Psi\|_1\leq 
    C d^{\kappa_1} \bar{d}^{\kappa_2}
    $
    .
\end{enumerate}
\end{assumption}
\begin{remark}\label{rem:lineardynamics} 
Given the specific dynamics in Assumption \ref{ass:lineardynamics}, the Hamiltonian possesses a simple explicit expression than can be easily evaluated by a small DNN, see Lemma \ref{lem:formula_H}. This is the main reason for us to consider this restricted setting. We expect most of our results to hold true in a more general setting, specifically  under certain growth condition of $f$ in $x$ and $v$ together with the assumption that the function $p^\top f(t,x,v) + L(t,x,v)$ is uniformly strongly convex in $v$. We leave the detailed proof to future work.
\end{remark}

We are now ready to state the main theorem of this paper stating that, given our assumptions, the value function $V$, the gradient $\nabla_x V$, as well as the optimal policy function $\alpha$ can be approximated by DNNs without curse of dimension.
\begin{theorem}
\label{thm:main_resutl}
Let Assumption \ref{ass:lineardynamics} be satisfied.
Let $q\in [1,\infty)$.
Suppose that for every $\delta_\Psi \in (0,1)$ there exist
ReCU DNNs $\phi_{\Psi,\delta_\Psi}$ 
with corresponding Lipschitz constant $C_{\phi_\Psi,x}$
that
satisfy
that for every $x\in\bbR^d$
\begin{equation*}
    |\Psi(x) - \phi_{\Psi,\delta_\Psi}(x)|
    \leq \delta_\Psi(1 + \|x\|_2^q).
\end{equation*}
Suppose for every $\delta_1,\delta_2,\delta_{\bar{L}}\in (0,1)$, there exist DNNs $\phi_{f_1,\delta_1}$,
 $\phi_{f_2,\delta_2}$, $\phi_{\bar{L},\delta_{\bar{L}}}$ with corresponding
 Lipschitz constants 
 $C_{1,x}$, $C_{2,x}$, $C_{\bar{L},x}$
 such that
 for every $t\in [0,t_f]$
 and every $x\in \bbR^d$
 \begin{equation*}
     \|f_{1}(t,x) - \phi_{f_1,\delta_1}(t,x)\|_2,
     \leq \delta_1(1 + \|x\|_2^q),
 \end{equation*}
 \begin{equation*}
     \|f_{2}(t,x) - \phi_{f_2,\delta_2}(t,x)\|_2 \leq \delta_2(1 + \|x\|_2^q),
 \end{equation*}
 and 
 \begin{equation*}
     |\bar{L}(t,x) - \phi_{\bar{L},\delta_{\bar{L}}}(t,x)| \leq \delta_{\bar{L}}(1 + \|x\|_2^q).
 \end{equation*}
 Suppose there exists a constant $\kappa_0>0$ 
 that does not depend on $d,\bar{d}$ such that 
 \begin{equation*}
 \sup_{\delta_{\Psi},\delta_1,\delta_2,\delta_{\bar{L}}\in (0,1)}
     \sup_{t\in[0,t_f]}
     \sup_{x\in\bbR^d}
     \{
     \|\phi_{f_1,\delta_1}(t,x)\|_\infty
     +
     \|\phi_{f_2,\delta_2}(t,x)\|_\infty
     +\|\phi_{\bar{L},\delta_{\bar{L}}}(t,x)\|_\infty
     \}
     = \calO(d^{\kappa_0} \bar{d}^{\kappa_0}).
 \end{equation*}
  Furthermore, we suppose that the Lipschitz constants satisfy
 \begin{equation*}
 \sup_{\delta_{\Psi},\delta_1,\delta_2,\delta_{\bar{L}}\in (0,1)}
 \max\{C_{\phi_\Psi,x},C_{1,x}, C_{2,x}, C_{\bar{L},x}\} = \calO(d^{\kappa_0} \bar{d}^{\kappa_0}).
 \end{equation*}
 Moreover, we assume that 
 \begin{equation*}
 \max\{  {\rm  size}(\phi_{\phi_\Psi,\delta_\Psi}),
 {\rm  size}(\phi_{f_1,\delta_1}),
 {\rm  size}(\phi_{f_2,\delta_2}), 
 {\rm  size}(\phi_{\bar{L},\delta_{\bar{L}}})
 \}
 = \calO( d^{\kappa_0}\bar{d}^{\kappa_0} \min\{\delta_\Psi,\delta_1,\delta_2,\delta_{\bar{L}}\}^{-\kappa_0}).
 \end{equation*}
 %
%%%%
Then, for every $\varepsilon>0$ and every bounded domain $Q\subset\bbR^d$ there exist DNNs $\phi_\varepsilon$, $\phi_{\varepsilon,\nabla}$, $\phi_{\varepsilon,\alpha}$
such that
\begin{equation*}
 \| V(0,\cdot) - \phi_\varepsilon\|_{L^2(Q)} 
 +
 \left\|\|\nabla_x V(0,\cdot) - \phi_{\varepsilon,\nabla}\|_2\right\|_{L^2(Q)}
 +
 \left\|\|\alpha(0,\cdot) - \phi_{\varepsilon,\alpha}\|_2\right\|_{L^2(Q)}
 \leq \varepsilon
\end{equation*}
and 
\begin{equation*}
\max\left\{{\rm size}(\phi_\varepsilon) ,
{\rm size}(\phi_{\varepsilon,\nabla}) ,
{\rm size}(\phi_{\varepsilon,\alpha})\right\}=\calO\left(\sup_{x\in Q}\{\|x\|_2^\kappa\} |Q|^\kappa d^\kappa \bar{d}^\kappa \varepsilon^{-\kappa}\right)
\end{equation*}
for some $\kappa>0$ that does not depend on $d,\bar{d}$.
\end{theorem}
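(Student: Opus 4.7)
The plan is to combine a Feynman--Kac/multilevel Picard representation of $V$ (adapting the framework of~\cite{HJK_2019,HJKN_2020}) with DNN emulation of each ingredient, so that sample-path approximants become DNNs of polynomially controlled size.

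\textbf{Step 1: Reduce the Hamiltonian to an explicit smooth function.} Under Assumption~\ref{ass:lineardynamics} the minimization in~\eqref{eq:Hamiltonian} is separable and box-constrained quadratic. For each component,
\[
\min_{v_i\in[a_i,b_i]}\bigl[(f_2(t,x)^\top p)_i v_i + \gamma v_i^2\bigr]
\]
is attained at $v_i^\ast=\Pi_{[a_i,b_i]}(-(f_2(t,x)^\top p)_i/(2\gamma))$, where $\Pi_{[a_i,b_i]}$ is the scalar projection onto $[a_i,b_i]$. Hence $H(t,x,p)=\bar L(t,x)+p^\top f_1(t,x)+\sum_{i=1}^{\bar d} h_i((f_2(t,x)^\top p)_i)$ with a known piecewise-quadratic $h_i$, and $\alpha(t,x)$ is the componentwise projection applied to $-(f_2(t,x)^\top\nabla_x V(t,x))/(2\gamma)$. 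Both $\Pi_{[a_i,b_i]}$ and $h_i$ can be emulated exactly by ReLU$+$ReCU DNNs of constant size (the projection is $\Pi_{[a,b]}(z)=b-\sigma_1(b-a-\sigma_1(z-a))$ and $h_i$ is a piecewise quadratic which is a small linear combination of ReCU and ReLU blocks). This shifts the burden from the minimization in~\eqref{eq:Hamiltonian} to emulating the smooth map $(x,p)\mapsto H(t,x,p)$, which is now globally Lipschitz in $(x,p)$ by Assumption~\ref{ass:lineardynamics}.

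\textbf{Step 2: Fixed point / MLP representation of $V$.} Using Girsanov or a direct semilinear Feynman--Kac formula, the HJB~\eqref{eq:HJB_PDE} is equivalent to the fixed-point equation
\[
V(t,x)=\mathbb E\!\left[\Psi(x+W_{t_f}-W_t)+\int_t^{t_f} H\bigl(s,x+W_s-W_t,\nabla_x V(s,x+W_s-W_t)\bigr)\,\rd s\right],
\]
interpreted through the Nisio semigroup framework of~\cite{FlemingSoner}. I would then introduce the multilevel Picard (MLP) scheme of~\cite{HJK_2019}: iterates $U_n$ and $(\nabla U)_n$ defined recursively via nested Monte Carlo averages of $\Psi$, $\bar L$, $f_1,f_2$, evaluated along independent Brownian increments. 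The central estimate from~\cite{HJK_2019} gives that for $n,M$ chosen so that $M^n\sim \varepsilon^{-c}$, the $L^2(Q)$ error between $V$ and $U_{n,M}$ (and likewise for the gradient, via a tailored MLP for $\nabla V$ using a Bismut--Elworthy--Li type representation or pathwise differentiation of the smoothed iterates) is at most $\varepsilon$, with overall sample complexity polynomial in $d$, $\bar d$, $\varepsilon^{-1}$.

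\textbf{Step 3: DNN emulation of the MLP iterates.} I would then show inductively in $n$ that $U_{n,M}$ and its companion gradient approximant can be realized by DNNs. The inductive step exploits: (a) parallelization and composition of DNNs, as sketched for $\sigma_1(x)+\sigma_3(x)$ in the preliminaries; (b) replacement of exact evaluations of $\Psi,f_1,f_2,\bar L$ by the assumed DNN approximants $\phi_{\Psi,\delta_\Psi},\phi_{f_1,\delta_1},\phi_{f_2,\delta_2},\phi_{\bar L,\delta_{\bar L}}$ with tolerances $\delta_{\bullet}$ chosen as negative powers of $\varepsilon/(d^{\kappa_0}\bar d^{\kappa_0})$; and (c) exact DNN emulation of $H$ and of the box projection as in Step~1. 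Each node in the MLP recursion tree contributes only an $\calO(d^{\kappa_0}\bar d^{\kappa_0}\delta^{-\kappa_0})$ block, and the number of nodes is $\calO(M^n)$, giving the stated polynomial size bound. Lipschitz continuity of all ingredients propagates to Lipschitz continuity of each DNN iterate, as needed to invoke the next MLP step with controlled error.

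\textbf{Step 4: Error aggregation.} Finally I would split the total error into (i) the MLP truncation error (controlled by~\cite{HJK_2019}), (ii) the DNN emulation error at each node (controlled by the Lipschitz hypothesis and the triangle inequality), and (iii) the tail error outside a ball of radius $R\sim\sup_{x\in Q}\|x\|_2$, which enters through the weight $(1+\|x\|_2^q)$ and is handled because $Q$ is bounded and Brownian increments have exponential moments. The same scheme applied to $\nabla_x V$ and to $\alpha(t,x)=\Pi_U(-f_2(t,x)^\top\nabla_x V(t,x)/(2\gamma))$ gives the joint bound.

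\textbf{Main obstacle.} The delicate point is the simultaneous approximation of $\nabla_x V$ without incurring the curse of dimension: naively differentiating an MLP scheme amplifies the variance, and a direct Bismut--Elworthy--Li representation uses Malliavin weights that, while standard for Lipschitz data, must be carefully discretized so that their DNN realization remains of polynomial size. Managing the interplay between the Lipschitz constant blow-up in $\nabla V$ and the $\delta$-dependence of the size of the data-DNNs — and ensuring this propagates through the MLP recursion — is the technically demanding step. Once this is in place, the analogous bound for $\alpha$ follows from the DNN-Lipschitz composition rule applied to the explicit formula from Step~1.
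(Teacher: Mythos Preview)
Your overall architecture---MLP scheme of~\cite{HJK_2019} plus DNN emulation of the data---is the right one and matches the paper. But there is a concrete gap in Step~1, and a structural difference from the paper's argument that bypasses the ``main obstacle'' you identify.

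\textbf{The gap: truncation of the Hamiltonian.} You assert that after the explicit minimization $H(t,x,p)$ is ``globally Lipschitz in $(x,p)$ by Assumption~\ref{ass:lineardynamics}.'' It is not. The term $p^\top f_1(t,x)$ alone has $x$-Lipschitz constant proportional to $\|p\|_2$, and likewise for $p^\top f_2(t,x)\bar u$; Lemma~\ref{lem:Lipschitz_H} makes this explicit: the $x$-Lipschitz constant $\bar C$ grows with $\max\{\|p\|_2,\|p'\|_2\}$. The MLP theory of~\cite{HJK_2019} requires global Lipschitz continuity in both the state and gradient arguments, so the scheme in your Step~2 is not known to converge. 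The paper repairs this by first bounding $\|\nabla_x V\|_\infty$ a~priori (Corollary~\ref{cor:bound_grad_V}), then replacing $H$ by the truncated $H_R(t,x,p):=H(t,x,\chi_R(p))$ with $R=\sup\|\nabla_x V\|_\infty$; this $H_R$ \emph{is} globally Lipschitz (Corollary~\ref{cor:Lipschitz_H_R}) and still produces the same $V$. Your proposal needs this step before the MLP machinery applies.

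\textbf{Structural difference.} Even with truncation in place, your plan diverges from the paper in where the DNN substitution occurs. You propose to run MLP on $V$ and then swap in $\phi_{\Psi,\delta_\Psi},\phi_{f_1,\delta_1},\ldots$ at each node, propagating the errors inductively through the recursion tree (Steps~3--4). The paper instead replaces $H_R$ and $\Psi$ by DNNs $\phi_{H,\delta_H,R}$, $\phi_{\Psi,\delta_\Psi}$ \emph{at the PDE level}, obtains a new classical solution $\widetilde V$ (this is why $\phi_\Psi$ must be a $C^2$ ReCU DNN), and proves a continuous-dependence bound $\|(V,\nabla_x V)-(\widetilde V,\nabla_x\widetilde V)\|_\infty\lesssim \varepsilon^{1-\delta}\cdot\mathrm{poly}(d,\bar d)$ via a Gronwall argument combined with Gaussian $\ell^\infty$-concentration (Lemma~\ref{lem:perturbation_value_function}). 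The MLP scheme is then applied only to $\widetilde V$, whose nonlinearity is already a globally Lipschitz DNN, so the iterates are automatically DNNs and no error-tracking through the recursion is needed. This cleanly sidesteps the obstacle you flag: the delicate analysis moves into one perturbation lemma, independent of MLP depth. Your node-by-node approach could in principle be made to work, but would require care that the accumulated DNN error does not blow up with the recursion level; the paper's route avoids that bookkeeping entirely.
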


Theorem \ref{thm:main_resutl} follows directly from  
Theorem~\ref{thm:DNN_approx_value_fct} and Corollary~\ref{cor:DNN_approx_control} in Section~\ref{sec:DNN_approx_Value_fct}.
\begin{remark}
This estimate in Theorem~\ref{thm:main_resutl} is pointwise with respect to the temporal argument. 
It can be turned into a space-time estimate with a DNN that takes the temporal and spatial variable as input.
Similar techniques as in~\cite{grohs2019spacetime}
may be applied. 
However, adaptations would still be needed which is why we leave this extension to future work.
\end{remark}
\subsection{Architecture of the Proof}

The approach of the proof of the main result Theorem~\ref{thm:main_resutl} will be carried out in various lemmas, propositions, and finally be concluded 
in Section~\ref{sec:DNN_approx_Value_fct}.
The HJB equation in~\eqref{eq:HJB_PDE}
is governed by the Hamiltonian $H$, 
which Lipschitz constant is generally linearly growing with respect to the third argument. 
The first step will be to utilize that in our setup under Assumption~\ref{ass:lineardynamics}, 
the gradient of the value function is bounded. 
This allows us to truncate the Hamiltonian in the third variable, 
and thus obtain a truncated Hamiltonian, which is globally Lipschitz continuous in the second
and third argument 
and induces the same value function. 
The second step comprises the construction of a DNN approximation of the Hamiltonian based on DNNs that approximate the dynamics 
from Assumption~\ref{ass:lineardynamics}.
The established global Lipschitz continuity of the Hamiltonian enables us to use the sampling technique \emph{multilevel Picard} (MLP) method
that was introduced in~\cite{HJK_2019,HJKN_2020}.
We will control the error incurred by this 
perturbation of the Hamiltonian by a DNN approximation
to the value function and 
then show existence of DNN weights of the resulting DNN that approximate the value function and the gradient
without the curse of dimension.
The MLP method will as the last step be a vehicle in the proof to show the existence of the DNN weights.

\subsection{Outline and notation}

In Section~\ref{sec:trunc_H}, 
we show that a truncated Hamiltonian may be considered and yield in our cases 
the same value function. The truncated Hamiltonian is globally Lipschitz continuous.
In Section~\ref{sec:perturb_H}, we prove that the value function 
that results by perturbation of the Hamiltonian and the terminal condition
approximates the original value function.
In Section~\ref{sec:DNN_approx_H}, we show that the Hamiltonian may be approximated by DNNs
under the assumption that the dynamics of the Markov process and cost function 
can be approximated by DNNs.
In Section~\ref{sec:DNN_approx_Value_fct}, we prove the main result that there exist DNNs 
that approximate the value function and the gradient without incurring the curse of dimension.

We shall denote the set of real-valued functions on $[0,t_f]\times\bbR^d$
which are once continuously differentiable in the first argument 
and twice continuously differentiable in the second argument 
at every $(t,x)\in [0,t_f]\times \bbR^d$ by 
$C^{1,2}([0,t_f]\times \bbR^d)$.
The twice continuously differentiable functions on $\bbR^d$ are denoted by $C^2(\bbR^d)$.
For any bounded Lipschitz domain $\mathcal{D}\subset \bbR^n$, $n\in\bbN$,
and $\gamma\in [0,\infty)$, 
we denote the H\"older spaces by $C^\gamma(\overline{\mathcal{D}})$.
For every $q\in [1,\infty]$, we denote 
the $q$-norm
by $\|\cdot\|_q$, i.e.,
for every $A\in\bbR^{n\times m}$
$\|A\|_q := (\sum_{i=1}^n\sum_{j=1}^m
|A_{ij}|^q)^{1/q}$, $q\in [1,\infty)$,
and $\|A\|_{\infty} = \max_{i=1,\ldots,n}\max_{j=1,\ldots,m}\{|A_{ij}|\}$,
$m,n\in\bbN$.
For any bounded domain $Q\subset\bbR^n$, $n\in \bbN$,
the Lebesgue measure of $Q$ is denoted by
$|Q|$.
The function space of square integrable functions on $Q$
is denoted by $L^2(Q)$.
The normal distribution with mean $\mu$ and variance $\sigma^2>0$
is denoted by $\calN(\mu, \sigma^2)$. 
Finally the gradient with respect to $x$ or $p$
is denoted by $\nabla_x$ and $\nabla_p$, 
respectively.

\section{Truncating the Hamiltonian}
\label{sec:trunc_H}

A major obstruction in the analysis of solutions to HJB equations is the Hamiltonian, 
which is in general not globally Lipschitz continuous. 
In our setup under Assumption~\ref{ass:lineardynamics}, we may circumvent this issue by a suitable truncation 
of the Hamiltonian.

The value function in~\eqref{eq:value_fctn} is the unique solution to~\eqref{eq:HJB_PDE} under Assumption~\ref{ass:lineardynamics}. 
Specifically, by~\cite[Theorem~3.1]{HJB_classical_solutions}
existence and uniqueness of the value
function follows in $C^{1,2}([0,t_f]\times \bbR^d)$. 
Moreover, 
the gradient of the value function is uniformly bounded over the domain $[0,t_f]\times \bbR^d$.
Specifically, under Assumption~\ref{ass:lineardynamics}, Corollary~\ref{cor:bound_grad_V} implies that
there exist $C,\bar{\kappa},\kappa_1,\kappa_2$ such that 
for every $d,\bar{d}$
\begin{equation}\label{eq:bound_grad_V}
 \sup_{t\in [0,t_f]} \sup_{x\in \bbR^d} \| \nabla_x V(t,x)\|_2
 \leq 
 C e^{\bar{\kappa} t_f} 
 d^{\kappa_1} \bar{d}^{\kappa_2}
\end{equation}
We seek to utilize that the gradient of the solution $V$ is bounded and shall modify $H$ in the third variable
where it does not influence the solution $V$ as a vehicle in the theoretical arguments in the following.
For any $R>0$, let us define the function 
\begin{equation*}
 \chi_R(y) = 
 \begin{cases}
 \min\{y,R\} & \text{ if } y\geq 0,\\
 \max\{y,-R\} & \text{ if } y <0 .
 \end{cases}
 \end{equation*}
For simplicity, we use the same notation if $\chi_R$ is applied to vectors coordinatewise
and write $\chi_R(p) = (\chi_R(p_1),\ldots, \chi_R(p_d))^\top  $
for any $p\in\bbR^d$, $R>0$.
Let us define a globally Lipschitz continuous version $H_R$ of $H$ 
by
\begin{equation*}
 (t,x,p)
 \mapsto 
 H_R(t,x,p)
 :=
 \inf_{v\in U}
 \{
 \chi_R(p)^\top f(t,x,v) + L(t,x,v)\}
 \quad \forall (t,x,p) \in [0,t_f]\times\bbR^d\times\bbR^d 
 .
\end{equation*}
By construction, it holds that $H(t,x,p) = H_R(t,x,p)$ 
for every $(t,x,p) \in [0,t_f]\times\bbR^d \times [-R,R]^d$.
Since $V\in C^{1,2}([0,t_f]\times \bbR^d)$ 
is a classical solution of~\eqref{eq:HJB_PDE} with bounded first order spatial derivatives, for the choice 
\begin{equation}\label{eq:def_R}
R:= \sup_{t\in [0,t_f]}\sup_{x\in\bbR^d} \|\nabla_x V(t,x)\|_\infty <\infty,
\end{equation}
$V$ is also a classical solution to
\begin{equation*}
 \partial_t V(t,x)
 + \frac{1}{2}\Delta V(t,x) + H_R(t,x,\nabla_x V(t,x))=0
  \text{ on } [0,t_f)\times \bbR^d, 
  \quad V(t_f,x) = \Psi(x) \text{ on } \bbR^d
  .
\end{equation*}
Moreover, by~\eqref{eq:bound_grad_V} it holds that
\begin{equation}\label{eq:bound_R}
    R \leq 
    C e^{\bar{\kappa} t_f} 
 d^{\kappa_1} \bar{d}^{\kappa_2}
 ,
\end{equation}
where the constants $C,\bar{\kappa},\kappa_1,\kappa_2 >0$
do not depend on $d,\bar{d}$.

As we shall see ahead in Corollary~\ref{cor:Lipschitz_H_R}, the truncated Hamiltonian $H_R$ is globally Lipschitz continuous with respect to the second 
and third variable. We shall below (see ahead Lemma~\ref{lem:Lipschitz_H})
also quantify the Lipschitz constants and make the dependence on the dimension $d$
 and the dimension $\bar{d}$ of the set of admissible controls explicit.
By \cite[Lemma~4.2 (ii)]{HJK_2019} the solution 
to~\eqref{eq:HJB_PDE} and its gradient may be represented as
\begin{equation}\label{eq:stoch_repr_HJB}
\begin{aligned}
(V(s,x), \nabla_x V(s,x))^\top
&=
\bbE\left(\Psi(x + W_{t_f-s}) (1,\tfrac{W_{t_f-s}}{t_f-s})^\top \right)
\\
&\quad + 
\bbE\left( 
\int_{s}^{t_f}
H_R(t,x + W_{t-s}, \nabla_x V(t,x + W_{t-s}))(1,\tfrac{W_{t-s}}{t-s})^\top
{\rm d}t  
\right)
.
\end{aligned}
\end{equation}
Equation~\eqref{eq:stoch_repr_HJB} also holds when $H_R$
is replaced by $H$, $d,\bar{d}\in\bbN$.

We state in the following lemma 
an expression for the Hamiltonian, which holds under Assumption~\ref{ass:lineardynamics}.

\begin{lemma}\label{lem:formula_H}
Let Assumption \ref{ass:lineardynamics} be satisfied.
Then for every $(t,x,p)\in [0,t_f]\times \bbR^d \times \bbR^d$ it holds that
\begin{equation*}
    H(t,x,p)
    =
    p^\top f(t,x,\bar{u}(t,x,p)) + L(t,x,\bar{u}(t,x,p))
\end{equation*}
for 
\begin{equation*}
    \bar{u}(t,x,p)_i
    =
    \min\{ \max\{- (f_2(t,x)^\top p )_i /(2\gamma)  , a_i\}, b_i\} 
    \quad 
    i=1,\ldots, \bar{d}
    .
\end{equation*}
\end{lemma}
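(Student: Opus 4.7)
The plan is to substitute the specific forms of $f$ and $L$ from Assumption~\ref{ass:lineardynamics} into the definition~\eqref{eq:Hamiltonian} of the Hamiltonian and then solve the resulting box-constrained quadratic minimization in $v$ in closed form. Specifically, under~\eqref{eq:affine_controls} and~\eqref{eq:quadratic_cost} I would first write
\begin{equation*}
  p^\top f(t,x,v) + L(t,x,v)
  = p^\top f_1(t,x) + \bar{L}(t,x) + p^\top f_2(t,x) v + \gamma \|v\|_2^2,
\end{equation*}
and note that the first two terms are independent of $v$, so they factor out of the infimum.

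Next, the key observation I would exploit is that the remaining objective
\begin{equation*}
  v \mapsto p^\top f_2(t,x) v + \gamma \|v\|_2^2
  = \sum_{i=1}^{\bar{d}} \bigl[(f_2(t,x)^\top p)_i\, v_i + \gamma v_i^2 \bigr]
\end{equation*}
is separable across the coordinates of $v$, and the admissible set $U$ in~\eqref{eq:contols_in_box} is a Cartesian product of intervals. Hence the infimum over $U$ reduces to the $\bar{d}$ independent one-dimensional problems
\begin{equation*}
  \inf_{v_i \in [a_i,b_i]} \bigl[ (f_2(t,x)^\top p)_i\, v_i + \gamma v_i^2\bigr],
  \qquad i=1,\ldots,\bar{d}.
\end{equation*}

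Each such one-dimensional problem is the minimization of a strictly convex quadratic (since $\gamma>0$) over a closed interval. The unconstrained critical point is $v_i^\ast = -(f_2(t,x)^\top p)_i/(2\gamma)$, and by strict convexity the constrained minimizer is the Euclidean projection of $v_i^\ast$ onto $[a_i,b_i]$, which is precisely $\min\{\max\{v_i^\ast, a_i\},b_i\} = \bar{u}(t,x,p)_i$. Plugging $\bar{u}(t,x,p)$ back into the objective yields the asserted expression for $H(t,x,p)$. There is no hard step here; the only thing to verify carefully is the separability, which follows immediately because both the linear term and $\|v\|_2^2$ decompose into sums over coordinates and $U$ is a product set.
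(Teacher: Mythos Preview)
Your proposal is correct and follows essentially the same approach as the paper's own proof: both exploit the separability of the objective and the product structure of $U$ to reduce to $\bar{d}$ independent one-dimensional box-constrained quadratic problems, each solved by projecting the unconstrained minimizer $-(f_2(t,x)^\top p)_i/(2\gamma)$ onto $[a_i,b_i]$. If anything, your write-up is slightly more explicit (you spell out the strict convexity from $\gamma>0$ and the decomposition of the $v$-independent terms), but the underlying argument is identical.
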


\begin{proof}
 The specific dependence on the controls allows to separate the dependencies within the coordinates of the controls.
 Specifically, it suffices to minimize
 \begin{equation*}
    \argmin_{v_i \in [a_i,b_i]}\{ f_2(t,x)^\top p v_i + \gamma v_i^2\}
     \quad i = 1 ,\ldots, \bar{d}
     .
 \end{equation*}
 This is solved by $\bar{v}_i = -(f_2(t,x)^\top p )_i /(2\gamma)$ if 
 $-(f_2(t,x)^\top p )_i /(2\gamma) \in [a_i,b_i]$, $i=1,\ldots,\bar{d}$. 
 In the other cases this value needs to be projected to the interval $[a_i,b_i]$,
 $i=1,\ldots,\bar{d}$. The assertion is thus proved.
\end{proof}

\begin{lemma}\label{lem:Lipschitz_H}
Let Assumption~\ref{ass:lineardynamics}(i) be satisfied.
There exists a constant $\widetilde{C}>0$ such that
for every $x,x'\in \bbR^d$
and every $p,p'\in \bbR^d$, $d,\bar{d}\in\bbN$,
\begin{equation*}
    |H(t,x,p) - H(t,x',p')| 
    \leq 
    \bar{C} \|x-x'\|_1
    +
    \widetilde{C}\| p- p'\|_\infty,
\end{equation*}
where 
\begin{equation*}
\begin{aligned}
    \bar{C}
    &=
    \max\{\|p\|_2,\|p'\|_2\}
    \\
    &\times
    \max_{i=1,\ldots,d}
    \left(\|\partial_{x_j} f_1(t,x)\|_2 
    + \|\partial_{x_j} f_2(t,x)\|_2\left(\max\{\|a\|_2, \|b\|_2\}
    + \frac{\|f_2(t,x)\|_2\|p\|_2}{2\gamma}\right) 
    \right)
     \\
     &\quad 
    +
    \|\nabla_x \bar{L}(t,x)\|_\infty
    + 
    \max_{i=1,\ldots,d}\frac{\|\partial_{x_j}f_2(t,x)\|_2 \max\{\|p\|_2,\|p'\|_2\}}{2\gamma} \max\{\|a\|_2, \|b\|_2\}
    .
    \end{aligned}
\end{equation*}
\end{lemma}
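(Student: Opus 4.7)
The plan is to combine the explicit minimizer expression from Lemma~\ref{lem:formula_H} with the standard envelope-type inequality for parametric infima and the fundamental theorem of calculus.

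First, by Lemma~\ref{lem:formula_H} the infimum defining $H$ is attained, so I would pick $v^\dagger := \bar u(t,x',p')$ as a (generally suboptimal) feasible control at the point $(t,x,p)$. Then
\[ H(t,x,p) - H(t,x',p') \leq \bigl[p^\top f(t,x,v^\dagger) + L(t,x,v^\dagger)\bigr] - \bigl[p'^\top f(t,x',v^\dagger) + L(t,x',v^\dagger)\bigr]. \]
Adding and subtracting $p^\top f(t,x',v^\dagger)$ splits the right-hand side into a piece proportional to $(p-p')$ and a piece proportional to $f(t,x,v^\dagger) - f(t,x',v^\dagger)$. The first is bounded by $\|p-p'\|_\infty \|f(t,x',v^\dagger)\|_1$, which is uniformly bounded by Assumption~\ref{ass:lineardynamics}(i) and the compactness of $U$, producing the finite $\widetilde C \|p-p'\|_\infty$ term. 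The matching lower bound follows by the symmetric choice $v^\dagger := \bar u(t,x,p)$.

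For the $x$-dependent piece I would use the affine structure $f(t,x,v) = f_1(t,x)+f_2(t,x)v$ together with the fundamental theorem of calculus along the segment $x_s := x'+s(x-x')$,
\[ f_i(t,x) - f_i(t,x') = \int_0^1 \sum_{j=1}^d \partial_{x_j} f_i(t,x_s)(x_j - x'_j)\, \dd s, \qquad i=1,2, \]
and analogously for $\bar L$. Cauchy--Schwarz, the operator-norm bound $\|[\partial_{x_j} f_2(t,x_s)]v^\dagger\|_2 \leq \|\partial_{x_j}f_2(t,x_s)\|_2 \|v^\dagger\|_2$, and the elementary pointwise estimate
\[ \|\bar u(t,y,q)\|_2 \leq \max\{\|a\|_2,\|b\|_2\} + \frac{\|f_2(t,y)\|_2 \|q\|_2}{2\gamma}, \]
which one reads off directly from the coordinatewise projection formula of Lemma~\ref{lem:formula_H}, then yield
\[ \bigl|p^\top[f(t,x,v^\dagger)-f(t,x',v^\dagger)]\bigr| \leq \|p\|_2 \max_j \bigl(\|\partial_{x_j}f_1\|_2 + \|\partial_{x_j}f_2\|_2 \|v^\dagger\|_2\bigr) \|x-x'\|_1. \]
The cost difference contributes at most $\|\nabla_x \bar L(t,\cdot)\|_\infty \|x-x'\|_1$ by the same argument applied to $\bar L$. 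Replacing $\|p\|_2, \|p'\|_2$ by $\max\{\|p\|_2,\|p'\|_2\}$ where needed and separating the two summands in the estimate of $\|v^\dagger\|_2$ reproduces the three additive components of $\bar C$ as stated.

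The main obstacle is purely organizational: the correct bookkeeping for how $\max\{\|p\|_2,\|p'\|_2\}$, the projection bound on $\|\bar u\|_2$, and $\max\{\|a\|_2,\|b\|_2\}$ interleave, so that the resulting upper bound takes the exact algebraic form displayed for $\bar C$. No new analytic tool beyond Lemma~\ref{lem:formula_H}, the envelope argument, and the fundamental theorem of calculus (which implicitly uses $C^1$ regularity of $f_1, f_2, \bar L$ in $x$, a regularity reflected in the finiteness of the partial-derivative norms appearing in $\bar C$) is needed.
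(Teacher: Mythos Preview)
Your envelope/competitor argument is correct and is a genuinely different route from the paper's proof. The paper does \emph{not} freeze the minimizer; instead it differentiates $H$ directly through the explicit formula for $\bar u$ from Lemma~\ref{lem:formula_H}, computing the weak Jacobians $D_p\bar u$ and $D_x\bar u$ of the coordinatewise projection (which vanish where a box constraint is active and equal $-\partial f_2^\top p/(2\gamma)$ otherwise), then bounds $\|\nabla_p H\|_1$ and $\|\nabla_x H\|_\infty$ and invokes the fundamental theorem of calculus. In particular, the third additive summand in the displayed $\bar C$ arises in the paper from the chain-rule term $2\gamma (D_x\bar u)^\top \bar u$, i.e.\ from differentiating $\gamma\|\bar u\|_2^2$ in $x$.

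Your approach avoids differentiating through the projection altogether, which is cleaner and in fact yields a slightly \emph{tighter} $x$-Lipschitz constant: since $v^\dagger\in U$ you could even use $\|v^\dagger\|_2\le \max\{\|a\|_2,\|b\|_2\}$ directly, and the $\gamma\|v^\dagger\|_2^2$ term drops out of the $x$-difference entirely. The price is exactly the bookkeeping issue you flag: the precise algebraic form of $\bar C$ as stated (with its three summands and the placement of $\|f_2\|_2\|p\|_2/(2\gamma)$) is the shape produced by the paper's chain-rule computation, not by the competitor argument, so matching it term-by-term from your estimate requires deliberately weakening your bound. Either constant suffices for the downstream Corollary~\ref{cor:Lipschitz_H_R}.
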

\begin{proof}
 The gradient of $H$ with respect to $p$ is given by
 \begin{equation}\label{eq:grad_p_H}
     \nabla_p H(t,x,p)
     =
     f_1(t,x)
     +
     f_2(t,x) \bar{u}(t,x,p)
     + p^\top f_2(t,x) D_p \bar{u}(t,x,p)
     +\gamma \bar{u}(t,x,p) D_p\bar{u}(t,x,p)
     ,
 \end{equation}
where $\bar{u}(t,x,p)$
 was specified in Lemma~\ref{lem:formula_H}
 and $D_p$ denotes the Jacobian with respect to $p$.
Moreover, the weak derivative of $\bar{u}$
satisfies
\begin{equation*}
    \partial_{p_j} \bar{u}(t,x,p)_i
    =
    \begin{cases}
    - f_2(t,x)_{ij}/(2\gamma) & \text{if } (-f_2(t,x)^\top p)_i/(2\gamma) \in [a_i,b_i],
    \\
    0 &\text{else}.
    \end{cases}
\end{equation*}
We shall bound $\|\nabla_p H(t,x,p)\|_1$. 
In particular, by the assumption on $f_2$, 
the second term in~\eqref{eq:grad_p_H} may be estimated
$\|f_2(t,x)\bar{u}(t,x,p)\|_1\leq C \max\{\|a\|_\infty,\|b\|_\infty\}$.
To estimate the third term, we use the properties of the support of $D_p \bar{u}$
and obtain 
\begin{equation*}
    \| p^\top f_2(t,x) D_p \bar{u}(t,x,p) \|_1
    \leq 
    \max\{\|a\|_\infty,\|b\|_\infty\}
    \sum_{i=1}^{d}\sum_{j=1}^{\bar{d}}
    \frac{1}{2\gamma}|f_2(t,x)_{ij}|
    \leq 
    \frac{\max\{\|a\|_\infty,\|b\|_\infty\}}{2\gamma}
    C.
\end{equation*}
Similarly, we obtain that 
$\|\bar{u}(t,x,p) D_p\bar{u}(t,x,p)\|_1\leq C\max\{\|a\|_\infty,\|b\|_\infty\}$.
This establishes the Lipschitz bound with respect to $p$ and the $\infty$-norm.

In the second step, we show Lipschitz continuity with respect to the second variable of $H$. 
Note that 
\begin{equation*}
    \partial_{x_j}
    \bar{u}(t,x,p)_i
    =
    \begin{cases}
    -(\partial_{x_j}(f_2)^\top p)_i/(2\gamma)
    &
    \text{if }
    -((f_2)^\top p)_i/(2\gamma) \in [a_i,b_i],\\
    0 & \text{else.}
    \end{cases}
\end{equation*}
It holds that
\begin{equation*}
    \nabla_x H(t,x,p)
    =
    (D_x f_1(t,x)
    + D_x(f_2(t,x) \bar{u}(t,x,p))^\top p
    + \nabla_x \bar{L}(t,x)
    +
    2\gamma 
    D_x(\bar{u}(t,x,p))^\top \bar{u}(t,x,p),
\end{equation*}
which implies 
\begin{equation*}
\begin{aligned}
    &\| \nabla_x H(t,x,p)\|_\infty
    \\
    &\leq 
    \|p\|_2
    \max_{i=1,\ldots,d}
    \left\{\|\partial_{x_j} f_1(t,x)\|_2 
    + \|\partial_{x_j} f_2(t,x)\|_2\left(\max\{\|a\|_2, \|b\|_2\}
    + \frac{\|f_2(t,x)\|_2\|p\|_2}{2\gamma}\right) 
    \right\}
    \\
    &\quad +
    \|\nabla_x \bar{L}(t,x)\|_\infty
    + 
    \max_{i=1,\ldots,d}\frac{\|\partial_{x_j}f_2(t,x)\|_2\|p\|_2}{2\gamma} \max\{\|a\|_2, \|b\|_2\}
    ,
\end{aligned}
\end{equation*}
where the Jacobian with respect to $x$ is denoted by $D_x$. 
The assertion follows by the fundamental theorem of calculus.
\end{proof}

Let us state the following consequence for $H_R$
as a corollary to the previous lemma.

\begin{corollary}\label{cor:Lipschitz_H_R}
Let Assumption~\ref{ass:lineardynamics}(i)--(iv) be satisfied.
Then, there exist $C',\kappa_3,\kappa_4>0$
such that for every $t\in [0,t_f]$, 
and every $d,\bar{d}\in\bbN$, $x,x',p,p'\in\bbR^d$
\begin{equation*}
    |H_R(t,x,p) - H_R(t,x',p')| 
    \leq 
    C' R d^{\kappa_3} \bar{d}^{\kappa_4} \|x-x'\|_1
    +
    C'\| p- p'\|_\infty
    .
\end{equation*}
\end{corollary}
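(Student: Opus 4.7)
The plan is to obtain the corollary as a direct consequence of Lemma~\ref{lem:Lipschitz_H} after observing that by definition
\begin{equation*}
    H_R(t,x,p) = H(t,x,\chi_R(p))
    \qquad \forall (t,x,p)\in [0,t_f]\times\bbR^d\times\bbR^d,
\end{equation*}
so all of the work reduces to propagating the truncation through the bounds already proved for $H$.

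First I would use two elementary properties of the coordinatewise truncation $\chi_R$: it is $1$-Lipschitz with respect to the $\infty$-norm (in fact, with respect to every $p$-norm, since $\chi_R$ is a Euclidean projection onto the cube $[-R,R]^d$ applied coordinatewise), and it satisfies $\|\chi_R(q)\|_\infty\le R$ for every $q\in\bbR^d$, hence $\|\chi_R(q)\|_2\le \sqrt{d}\,R$. Applying Lemma~\ref{lem:Lipschitz_H} with the third arguments $\chi_R(p)$ and $\chi_R(p')$ in place of $p$ and $p'$ then yields
\begin{equation*}
    |H_R(t,x,p)-H_R(t,x',p')|
    \le \bar{C}\,\|x-x'\|_1 + \widetilde{C}\,\|\chi_R(p)-\chi_R(p')\|_\infty
    \le \bar{C}\,\|x-x'\|_1 + \widetilde{C}\,\|p-p'\|_\infty,
\end{equation*}
where $\bar{C}$ is the expression from Lemma~\ref{lem:Lipschitz_H}, but now with $\max\{\|p\|_2,\|p'\|_2\}$ replaced by $\max\{\|\chi_R(p)\|_2,\|\chi_R(p')\|_2\}\le \sqrt{d}\,R$.

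Next I would bound the individual ingredients of $\bar{C}$ and $\widetilde{C}$ using Assumption~\ref{ass:lineardynamics}(i)--(iv). By (i), $\|f_2(t,x)\|_2\le \|f_2(t,x)\|_1\le C$; by (ii) and (iii), the Jacobian entries $\|\partial_{x_j}f_1(t,x)\|_2$ and $\|\partial_{x_j}f_2(t,x)\|_2$ are dominated by $C$ and $Cd^{\kappa_1}\bar d^{\kappa_2}$, respectively (invoking the fundamental theorem of calculus as in the proof of Lemma~\ref{lem:Lipschitz_H}); by (iv), $\|\nabla_x\bar L(t,x)\|_\infty\le Cd^{\kappa_1}\bar d^{\kappa_2}$. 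The cube bounds $\max\{\|a\|_2,\|b\|_2\}$ are controlled by $\sqrt{\bar d}\max_i\max\{|a_i|,|b_i|\}$ and are thus polynomial in $\bar d$. Substituting all of these into the expression for $\bar{C}$ and collecting the worst-case powers of $d$ and $\bar d$ (together with the factor $\sqrt{d}\,R$ coming from the truncation) produces a bound of the form $C' R\, d^{\kappa_3}\bar d^{\kappa_4}$ for constants $C',\kappa_3,\kappa_4>0$ independent of $d,\bar d$. The $p$-Lipschitz constant $\widetilde{C}$ was already shown to be uniformly bounded (independent of $R,d,\bar d$) in the proof of Lemma~\ref{lem:Lipschitz_H}, so it may be absorbed into $C'$ as well.

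I do not expect any genuine obstacle here: the only thing to be careful about is that the estimate of $\bar{C}$ in Lemma~\ref{lem:Lipschitz_H} depends on $p,p'$ only through $\max\{\|p\|_2,\|p'\|_2\}$, which is precisely the quantity that the truncation improves from $\|p\|_2$ to $\sqrt{d}\,R$. The rest is bookkeeping of the dimensional factors supplied by Assumption~\ref{ass:lineardynamics}.
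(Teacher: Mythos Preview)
Your proposal is correct and follows exactly the approach the paper intends: the paper presents this result as an immediate corollary to Lemma~\ref{lem:Lipschitz_H} without giving a separate proof, and your reduction via $H_R(t,x,p)=H(t,x,\chi_R(p))$ together with the $1$-Lipschitz property of $\chi_R$ and the bound $\|\chi_R(p)\|_2\le\sqrt{d}\,R$ is precisely the intended argument. One small point worth noting in the bookkeeping: the expression for $\bar C$ in Lemma~\ref{lem:Lipschitz_H} contains a term $\|f_2(t,x)\|_2\|p\|_2/(2\gamma)$ multiplied by $\max\{\|p\|_2,\|p'\|_2\}$, so after truncation you pick up a factor $R^2$ rather than $R$ in one of the summands; this is harmless for the downstream use (and can be absorbed into the polynomial dimensional factors once the bound~\eqref{eq:bound_R} on $R$ is invoked), but strictly speaking the $x$-Lipschitz constant scales like $R(1+R)$ rather than $R$.
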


\section{Continuous dependence for perturbed Hamiltonian}
\label{sec:perturb_H}

In this section, we analyze the impact of the solution
$V$ and the gradient $\nabla_x V$, when 
the Hamiltonian $H$ and the terminal condition $\Psi$ are perturbed. 
This shall in a later section be applied, 
when the Hamiltonian is approximated by a DNN.
However, we shall formulate the resulting statement
in this section in a general way.

Let us recall concentration bounds of Gaussian vectors in the 
$\infty$-norm.
\begin{lemma}\label{lem:concentration_gauss_maxnorm}
Let $Z=(z_1,\ldots,z_n)^\top$ be a Gaussian vector, i.e., 
$z_i$ is normally distributed with mean zero and variance equal to $\sigma^2$
for some $\sigma>0$.
Then, 
for every $p\in [1,\infty)$
there exists a constant $C_p$ that only depends on $p$
such that for every $\alpha>0$
 \begin{equation*}
\begin{aligned}
\bbE\left( \left |\| Z\|_\infty - \| Z\|_\infty \mathbbm{1}_{\left\{\|Z\|_\infty \leq \sigma \sqrt{ 2\log(2n )  } + \alpha\right\}}\right |^p \right)^{1/p}
&\leq 
\sigma C_p  \sqrt{\log(2n)}
e^{-\alpha^2/(4p\sigma^2)}
.
\end{aligned}
\end{equation*}
.
\end{lemma}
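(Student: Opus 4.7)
The plan is to first observe that the expression inside the expectation simplifies: since $\|Z\|_\infty$ and $\|Z\|_\infty \mathbbm{1}_{\{\|Z\|_\infty \leq T\}}$ coincide on $\{\|Z\|_\infty \leq T\}$ and the second term vanishes on the complement, one has $|\|Z\|_\infty - \|Z\|_\infty \mathbbm{1}_{\{\|Z\|_\infty \leq T\}}| = \|Z\|_\infty \mathbbm{1}_{\{\|Z\|_\infty > T\}}$, with $T := \sigma\sqrt{2\log(2n)} + \alpha$. The task thus reduces to bounding the one-sided truncated moment
\begin{equation*}
 \bbE\left(\|Z\|_\infty^p \mathbbm{1}_{\{\|Z\|_\infty > T\}}\right)^{1/p}.
\end{equation*}

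I would apply the Cauchy--Schwarz inequality to split this into the product of a moment factor and a probability factor:
\begin{equation*}
 \bbE\left(\|Z\|_\infty^p \mathbbm{1}_{\{\|Z\|_\infty > T\}}\right)^{1/p}
 \leq
 \bbE\left(\|Z\|_\infty^{2p}\right)^{1/(2p)} \bbP(\|Z\|_\infty > T)^{1/(2p)}.
\end{equation*}
For the tail probability, the union bound together with the standard sub-Gaussian estimate $\bbP(|z_i| > t) \leq 2 e^{-t^2/(2\sigma^2)}$ yields $\bbP(\|Z\|_\infty > t) \leq 2n e^{-t^2/(2\sigma^2)}$. Plugging in $t = T$ and expanding $T^2/(2\sigma^2) = \log(2n) + \alpha\sqrt{2\log(2n)}/\sigma + \alpha^2/(2\sigma^2)$ gives
\begin{equation*}
 \bbP(\|Z\|_\infty > T) \leq e^{-\alpha\sqrt{2\log(2n)}/\sigma - \alpha^2/(2\sigma^2)} \leq e^{-\alpha^2/(2\sigma^2)},
\end{equation*}
so that the $2p$-th root produces exactly the desired factor $e^{-\alpha^2/(4p\sigma^2)}$.

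For the moment factor, I would use the layer-cake representation $\bbE(\|Z\|_\infty^{2p}) = \int_0^\infty 2p\, t^{2p-1}\bbP(\|Z\|_\infty > t)\,dt$ together with the same union bound, splitting the integral at $t_0 := \sigma\sqrt{2\log(2n)}$ where $2n e^{-t_0^2/(2\sigma^2)} = 1$. The $[0,t_0]$ piece contributes $t_0^{2p} = (\sigma\sqrt{2\log(2n)})^{2p}$, and the $[t_0,\infty)$ piece is handled by the change of variable $t = t_0 + s$ and Gaussian decay, producing an extra constant depending only on $p$. This gives $\bbE(\|Z\|_\infty^{2p})^{1/(2p)} \leq C'_p \sigma \sqrt{\log(2n)}$. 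Multiplying the two estimates yields the claim with $C_p$ a fixed multiple of $C'_p$.

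The tail computation is transparent; the only delicate part is the layer-cake integral, where one has to be careful that the constant coming out of the $[t_0,\infty)$ tail depends only on $p$ and not on $n$. This is routine once the change of variable $t = t_0 + s$ is performed, since then the remaining integrand decays like $e^{-t_0 s/\sigma^2}e^{-s^2/(2\sigma^2)}$ and the $2n$ prefactor is exactly cancelled by $e^{-t_0^2/(2\sigma^2)}$.
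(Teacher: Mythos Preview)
Your argument is correct and follows the same architecture as the paper's: rewrite the truncated quantity as $\|Z\|_\infty\,\mathbbm{1}_{\{\|Z\|_\infty>T\}}$, apply Cauchy--Schwarz, and bound separately the $2p$-th moment of $\|Z\|_\infty$ and the tail probability $\bbP(\|Z\|_\infty>T)$. The only differences lie in how the two factors are controlled. For the tail, the paper invokes the Gaussian concentration bound $\bbP(\|Z\|_\infty\geq\bbE\|Z\|_\infty+\alpha)\leq e^{-\alpha^2/(2\sigma^2)}$ together with the mean estimate $\bbE\|Z\|_\infty\leq\sigma\sqrt{2\log(2n)}$, whereas you get the same $e^{-\alpha^2/(2\sigma^2)}$ directly from the union bound and the sub-Gaussian tail. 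For the moment, the paper simply cites \cite[Lemma~A.1]{Chatterjee_2014}, while you rederive it via the layer-cake formula split at $t_0=\sigma\sqrt{2\log(2n)}$. Both routes yield the same constants in the exponent; your version is more self-contained, the paper's is shorter by deferring to known results.
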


\begin{proof}
 We recall some basic facts 
on the $\infty$-norm of Gaussian vectors.
It holds that
\begin{equation}\label{eq:mean_Zmaxnorm}
\bbE\left( 
\| Z\|_\infty
\right)
\leq 
 \sigma \sqrt{2\log(2n) }
\end{equation}
for any $n$-dimensional random vector $Z=(z_1,\ldots, z_n)$ such that $z_i\sim \calN (0,\sigma^2)$
for some $\sigma > 0$, $i=1,\ldots,n$.
Specifically, for every $\kappa >0 $ by the Jensen inequality
\begin{equation*}
\exp(\kappa \bbE (\|Z\|_\infty) )
\leq 
\bbE( \exp( \kappa \|Z\|_{\infty}) )
\leq 
\bbE\left(
\max_{i=1,\ldots, n}
\exp(\kappa  |z_i|) )
\right)  
\leq
n 
\bbE(\exp(\kappa  |z_1|) )
\leq
2n e^{\kappa^2 \sigma^2/2} 
.
\end{equation*}
The choice $ \kappa = \sqrt{2 \log(2n)}/\sigma $ and applying the logarithm
implies 
\begin{equation*}
\bbE(\| Z \|_\infty)
\leq 
\sigma \sqrt{ 2\log(2n )  }
.
\end{equation*}
Moreover, generally there holds
\begin{equation}\label{eq:lp_estimate_max_norm}
\bbE(\| Z \|_\infty^p)
\leq 
C_p 
\sigma^p  \log(2n )^{p/2}  ,
\end{equation}
where the constant $C_p$ only depends on $p$, see for example~\cite[Lemma~A.1]{Chatterjee_2014}.
The deviation from the mean of $\|Z\|_\infty$ satisfies (see for example~\cite[Equation~(A.7)]{Chatterjee_2014})
that for every $\alpha \geq 0$
\begin{equation}\label{eq:deviation_max_norm}
\bbP\left(
\|Z\|_\infty\geq \sigma \sqrt{ 2\log(2n )  } + \alpha
\right)
\leq
\bbP\left(
\|Z\|_\infty\geq \bbE(\|Z\|_\infty ) + \alpha
\right)
\leq 
e^{-\alpha^2/(2\sigma^2)}
,
\end{equation}
where we used that $\|Z\|_\infty = \max\{z_1,-z_1,\ldots, z_n, -z_n \}$
and applied \eqref{eq:mean_Zmaxnorm}.

We truncate the range of the weight $\|Z\|_\infty$ by $ \sigma \sqrt{ 2\log(2n )  }+ \alpha$
for some $\alpha >0$
and obtain with the Cauchy--Schwarz inequality, \eqref{eq:lp_estimate_max_norm}, and~\eqref{eq:deviation_max_norm}
for every $p\in [1,\infty)$,
\begin{equation*}
\begin{aligned}
&\bbE\left( \left |\| Z\|_\infty - \| Z\|_\infty \mathbbm{1}_{\left\{\|Z\|_\infty \leq \sigma \sqrt{ 2\log(2n )  } + \alpha\right\}}\right |^p \right)^{1/p}
\\
&\qquad\leq 
\left(  \bbE( \|Z\|_\infty^{2p} )  \right)^{1/{2p}}
\left( 
\bbE\left(\mathbbm{1}_{\left\{\|Z\|_\infty > \sigma \sqrt{ 2\log(2n )  } + \alpha\right\}} \right)
\right)^{1/(2p)}
% \\
% &\leq 
\leq \sigma C_p  \sqrt{\log(2n)}
e^{-\alpha^2/(4p\sigma^2)}
.
\end{aligned}
\end{equation*}
\end{proof}

\begin{remark}
The number 4 in the exponent in the statement of Lemma~\ref{lem:concentration_gauss_maxnorm}
may be reduced to $ 2 +\varepsilon $ 
for any $\varepsilon \in (0,2]$ by applying the H\"older inequality instead of the Cauchy--Schwarz inequality in the last step of the proof. The constant $C_p$ would then also depend on the choice of 
$\varepsilon$.
\end{remark}

Let $q\in [1,\infty)$.
Suppose for every $\varepsilon \in (0,1)$ there exists $H_R^*$, which can be seen as an approximation to $H_R$,
and there exists $\Psi^*$, which can be seen as an approximation to $\Psi$,
such that for every $x,p\in \bbR^d$, $t\in [0,t_f]$
\begin{equation}\label{eq:approx_H_Psi}
\max\{| H_R(t,x,p) - H_R^*(t,x,p)|, |\Psi(x) - \Psi^*(x) |\}
\leq 
\varepsilon 
(1+ \|x\|^q_2 + \|p\|^q_2)
.
\end{equation}
Moreover, there exists $C_{H_R,p}>0$ 
such that for every $x,p,p'\in \bbR^d$, $t\in [0,t_f]$
\begin{equation}\label{eq:Lipschitz_H_Hstar}
|H_R(t,x,p) - H_R(t,x,p')|
\leq 
C_{H_R,p} \|p-p'\|_\infty
.
\end{equation}
The constant $C_{H_R,p}$ does not depend 
on the dimension $d$.
The constants $C_{H_R,p}, C$ 
do not depend on $\varepsilon$.
Furthermore, assume that $H_R^*$
is globally Lipschitz continuous.
In particular, there exist constants $C_{H_R^*,x}, C_{H_R^*,p}$
such that for every $x,x',p,p'\in\bbR^d$,
\begin{equation}\label{eq:Lipschitz_H_R*}
    \sup_{t\in [0,t_f]}|H_R^*(t,x,p) - H_R^*(t,x',p')|
    \leq 
    C_{H_R^*,x} \|x-x'\|_1 + C_{H_R^*,p}\|p-p'\|_1
    .
\end{equation}
Also assume that $\Psi^*\in C^2(\bbR^d)$
and is globally Lipschitz continuous, i.e., 
there exists a constant $C_{\Psi^*,x}$
such that for every $x,x' \in\bbR^d$
\begin{equation}\label{eq:Lipschitz_Psi_R*}
    |\Psi^*(x) - \Psi^*(x')|
    \leq 
    C_{\Psi^*,x}
    \|x-x'\|_1
    .
\end{equation}
This implies in particular that $\Psi^*$
is at most linearly growing.
Denote by $V^*$ the solution to~\eqref{eq:HJB_PDE} 
with $H_R$ replaced by $H_R^*$
and $\Psi$ replaced by $\Psi^*$, which is 
in $C^{1,2}([0,t_f]\times \bbR^d)$ by 
Proposition~\ref{prop:appendix_classical_sol}.

The following lemma is inspired by~\cite[Lemma~2.3]{HJKN_2020}
and its proof.

\begin{lemma}
\label{lem:perturbation_value_function}
There exists a constant $C>0$ that does not depend on $d,\bar{d}$
such that for any $\delta\in (0,1)$
and for every $s\in [0,t_f)$, $x\in\bbR^d$
\begin{equation*}
\begin{aligned}
&\bbE( 
\|
(V(s,x + W_s), \nabla_x V(s,x+W_s))^\top
-
(V^*(s,x+W_s), \nabla_x V^*(s,x+W_s))^\top
\|_\infty
 )
 \\
&\leq 
\varepsilon^{1-\delta}
C e^{C_{H_R,p} (2(t_f-s) + \sqrt{t_f-s}3/(\sqrt{2}\delta) )}
\\
&\quad \times
\left(1+ \sup_{t\in[0,t_f]}\sup_{x',p'\in \bbR^d}|H_R(t,x',p')| + \|x\|^q_2 + t_f^2 + d^{q/2}(t_f C_{H_R^*,x} + C_{\Psi^*,x})^q + d^{3/2 +\delta} + \frac{1}{\sqrt{t_f-s}}\right)
.
\end{aligned}
\end{equation*}
\end{lemma}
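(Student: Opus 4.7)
The plan is to subtract the Feynman--Kac-type representations~\eqref{eq:stoch_repr_HJB} for $V$ and $V^*$, and then iterate through a Gronwall-type argument, taming the unbounded Malliavin weight $W_{t-s}/(t-s)$ in the gradient representation via the concentration estimate of Lemma~\ref{lem:concentration_gauss_maxnorm}. Writing $\Delta V := V - V^*$, $\Delta_\nabla := \nabla_x V - \nabla_x V^*$, and $G(t,y) := H_R(t,y,\nabla_x V(t,y)) - H_R^*(t,y,\nabla_x V^*(t,y))$, subtraction yields
\begin{equation*}
(\Delta V(s,x), \Delta_\nabla(s,x))^\top = \bbE\!\left((\Psi-\Psi^*)(x+W_{t_f-s})\bigl(1,\tfrac{W_{t_f-s}}{t_f-s}\bigr)^\top\right) + \bbE\!\left(\int_s^{t_f} G(t,x+W_{t-s})\bigl(1,\tfrac{W_{t-s}}{t-s}\bigr)^\top dt\right).
\end{equation*}
Adding and subtracting $H_R(t,y,\nabla_x V^*(t,y))$ and applying~\eqref{eq:approx_H_Psi} together with the dimension-free Lipschitz bound~\eqref{eq:Lipschitz_H_Hstar} gives $|G(t,y)| \leq C_{H_R,p}\|\Delta_\nabla(t,y)\|_\infty + \varepsilon(1+\|y\|_2^q+\|\nabla_x V^*(t,y)\|_2^q)$. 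The split through $H_R$ rather than $H_R^*$ is essential here because only $C_{H_R,p}$ is independent of $d,\bar d$.

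The second step is to establish the a priori bound $\sup_{t,y}\|\nabla_x V^*(t,y)\|_\infty \leq C_{\Psi^*,x} + t_f C_{H_R^*,x}$. Differentiating the PDE satisfied by $V^*$ in the direction $x_i$ yields a linear parabolic equation for $W_i := \partial_{x_i} V^*$ with terminal data $\partial_{x_i}\Psi^*$ and source $\partial_{x_i}H_R^*(t,y,\nabla_x V^*)$; its Feynman--Kac representation together with the pointwise bounds $\|\partial_{x_i}\Psi^*\|_\infty \leq C_{\Psi^*,x}$ and $\|\partial_{x_i}H_R^*\|_\infty \leq C_{H_R^*,x}$ derived from~\eqref{eq:Lipschitz_H_R*}--\eqref{eq:Lipschitz_Psi_R*} gives the claim. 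The conversion of $2$-norm to $\infty$-norm then contributes the factor $d^{q/2}(t_f C_{H_R^*,x}+C_{\Psi^*,x})^q$. Substituting the $G$-estimate, evaluating the representation at $x+W_s$, and taking expectation, I would obtain, with $F(s,x) := \bbE(\|\Delta V(s,x+W_s)\|_\infty + \|\Delta_\nabla(s,x+W_s)\|_\infty)$, an inequality of the schematic form
\begin{equation*}
F(s,x) \leq A(s,x) + C_{H_R,p}\int_s^{t_f} \bbE\!\left(\|\Delta_\nabla(t,x+W_t)\|_\infty\bigl(1 + \|\widetilde W_{t-s}\|_\infty/(t-s)\bigr)\right) dt,
\end{equation*}
where $\widetilde W$ is the independent Brownian driving the inner representation and $A(s,x)$ collects the $\varepsilon$-sized source terms, carrying the polynomial growth in $\|x\|_2$, the a priori bound on $\nabla_x V^*$, and a $1/\sqrt{t_f-s}$ singularity from the terminal $\Psi-\Psi^*$ contribution to $\Delta_\nabla$.

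To handle the singularity of $\|\widetilde W_{t-s}\|_\infty/(t-s)$ at $t=s$, I would split $\|\widetilde W_{t-s}\|_\infty$ using the indicator sets $A_\alpha:=\{\|\widetilde W_{t-s}\|_\infty \leq \sqrt{(t-s)\,2\log(2d)}+\alpha\}$ and $A_\alpha^c$. Lemma~\ref{lem:concentration_gauss_maxnorm} (together with Cauchy--Schwarz) bounds the contribution on $A_\alpha^c$ by $\calO(\varepsilon^{1-\delta})$ as long as $\alpha$ scales like $\sqrt{(t-s)/\delta}$ up to logarithmic factors; on $A_\alpha$ the truncated kernel is bounded, up to constants, by $(t-s)^{-1/2}/\sqrt{\delta}$, which is integrable with $\int_s^{t_f}(t-s)^{-1/2}dt=2\sqrt{t_f-s}$. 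An iterated (generalized) Gronwall inequality for $L^1$-integrable singular kernels then produces the exponential factor $\exp(C_{H_R,p}(2(t_f-s)+3\sqrt{t_f-s}/(\sqrt{2}\delta)))$, where the first term in the exponent comes from the constant part of the kernel and the second from its singular part.

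The main obstacle is the trade-off in choosing $\alpha$: too small an $\alpha$ leaves a tail remainder exceeding the desired $\varepsilon^{1-\delta}$ rate, while too large an $\alpha$ inflates the kernel on $A_\alpha$ and hence the Gronwall exponent; the specific scaling $\alpha \propto\sqrt{(t-s)\log(\varepsilon^{-1})/\delta}$ optimally balances these effects and is what produces the $1/\delta$ factor in the exponent. The residual auxiliary terms $\sup|H_R|$, $t_f^2$, $d^{3/2+\delta}$, and $1/\sqrt{t_f-s}$ in the final bound are then accumulated from using the crude enclosure $|H_R|\leq\sup|H_R|$ on the tail $A_\alpha^c$ of the terminal contribution, from time integration of the polynomial source, from $\sqrt{\log d}$ factors produced by the concentration estimate (raised to suitable powers when bounding moments of $x+W_t$ in the $\infty$-norm), and from the terminal-time singularity, respectively.
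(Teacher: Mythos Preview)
Your proposal is correct and follows essentially the same route as the paper: subtract the stochastic representations~\eqref{eq:stoch_repr_HJB}, split $G$ through $H_R(\cdot,\cdot,\nabla_x V^*)$ so that the dimension-free constant $C_{H_R,p}$ from~\eqref{eq:Lipschitz_H_Hstar} drives the Gronwall step, truncate the weight $\|W_{t-s}/(t-s)\|_\infty$ via Lemma~\ref{lem:concentration_gauss_maxnorm}, and balance the truncation level against the tail to extract $\varepsilon^{1-\delta}$. The paper's bookkeeping differs only cosmetically: it splits the weight before passing to $x+W_s$ (invoking independent increments as in \cite[Equation~(21)]{HJKN_2020}), keeps the truncation level $\alpha=\beta/\sqrt{t-s}$ with $\beta=\sqrt{8\log(\varepsilon^{-1})}$ free of $\delta$, and only at the very end trades $\exp(\sqrt{c_0\log x})\le\exp(\sqrt{c_0}/(4\delta))\,x^\delta$ to produce the $1/\delta$ in the exponent and the $d^\delta$ factor; it also cites \cite[Lemma~4.2(iv)]{HJK_2019} for the a~priori bound on $\nabla_x V^*$ rather than rederiving it.

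One small gap in your a~priori step: differentiating $H_R^*(t,y,\nabla_x V^*(t,y))$ in $y_i$ produces, in addition to the source $\partial_{x_i}H_R^*$, the chain-rule term $\nabla_p H_R^*\cdot\nabla_x W_i$, so the equation for $W_i=\partial_{x_i}V^*$ is linear parabolic \emph{with a first-order drift} $\nabla_p H_R^*$, not simply the heat equation with source. The Feynman--Kac representation then runs along a drifted diffusion rather than Brownian motion; since the bounds $|\partial_{x_i}\Psi^*|\le C_{\Psi^*,x}$ and $|\partial_{x_i}H_R^*|\le C_{H_R^*,x}$ (from~\eqref{eq:Lipschitz_H_R*}--\eqref{eq:Lipschitz_Psi_R*}) are pointwise, your conclusion $\|\nabla_x V^*(t,\cdot)\|_\infty\le C_{\Psi^*,x}+t_f C_{H_R^*,x}$ still follows, but the sentence as written is not quite right.
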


\begin{proof}
By~\eqref{eq:stoch_repr_HJB} 
and by~\cite[Lemma~4.2(ii)]{HJK_2019} applied to $V^*$, 
for every $s\in [0,t_f)$, $x\in \bbR^d$, 
it holds that
\begin{equation}\label{eq:estimate1}
\begin{aligned}
(I)&:=
\|
(V(s,x), \nabla_x V(s,x))^\top
-
(V^*(s,x), \nabla_x V^*(s,x))^\top
\|_\infty
\\
&\leq 
\bbE
\left(
|\Psi(x+ W_{t_f -s}) - \Psi^*(x+ W_{t_f-s}) |
(1+\| \tfrac{W_{t_f-s}}{t_f-s}\|_\infty )
\right)
\\
&\quad+
\bbE\left( \int_{s}^{t_f}
|H_R(t,x+W_{t-s}, \nabla_x V(t,x+W_{t-s}))
\right.
\\
&\qquad\qquad\qquad
\left.
- H_R^*(t,x+W_{t-s}, \nabla_x V^*(t,x+W_{t-s}))|
(1+\| \tfrac{W_{t-s}}{t-s}\|_\infty)
{\rm d}t
\right)
.
\end{aligned}
\end{equation}
The estimation of the error is here complicated by the occurrence of the unbounded random weight 
$\| \tfrac{W_{t-s}}{t-s}\|_\infty$.
We re-organize the terms in~\eqref{eq:estimate1} to separately study large deviation of $\| \tfrac{W_{t-s}}{t-s}\|_\infty$
from its mean as follows. 
For any $\alpha>0$ to be chosen below,

\begin{equation*}
\begin{aligned}
(I)
&\leq 
\bbE
\left(
|\Psi(x+ W_{t_f -s}) - \Psi^*(x+ W_{t_f-s}) |
\left(1+\| \tfrac{W_{t_f-s}}{t_f-s}\|_\infty \right)
\right)
\\
&\quad +
\bbE\left( \int_{s}^{t_f}
|H_R(t,x+W_{t-s}, \nabla_x V(t,x+W_{t-s}))
- H_R^*(t,x+W_{t-s}, \nabla_x V^*(t,x+W_{t-s}))|
\right.
\\
&\quad\qquad\qquad
\left.
\times
\left(1+\| \tfrac{W_{t-s}}{t-s}\|_\infty \mathbbm{1}_{\left\{\|\tfrac{W_{t-s}}{t-s}\|_\infty > \sqrt{2\log(2d)/(t-s)} +\alpha \right\}}  \right)
{\rm d}t
\right)
\\
&\quad +
\bbE\left( \int_{s}^{t_f}
|H_R(t,x+W_{t-s}, \nabla_x V(t,x+W_{t-s}))
- H_R^*(t,x+W_{t-s}, \nabla_x V^*(t,x+W_{t-s}))|
\vphantom{\left(1+\frac{\sqrt{2\log(2d)}}{\sqrt{t-s}} +\alpha \right)}
\right.
\\
&\quad\qquad\qquad
\left.
\times
\left(1+\frac{\sqrt{2\log(2d)}}{\sqrt{t-s}} +\alpha \right)
{\rm d}t
\right).
\end{aligned}
\end{equation*}
It holds that the gradient of $V$ and $V^*$ are bounded with respect to $t$ and $x$. 
This is implied by~\eqref{eq:bound_grad_V} and by~\cite[Lemma~4.2(iv)]{HJK_2019}, 
respectively. 
Thus, by Lemma~\ref{lem:concentration_gauss_maxnorm} applied with $p=2$ and the Cauchy--Schwarz inequality, 
and~\eqref{eq:approx_H_Psi}
\begin{equation*}
\begin{aligned}
(I)
&\leq 
\bbE
\left(
|\Psi(x+ W_{t_f -s}) - \Psi^*(x+ W_{t_f-s}) |
\left(1+\| \tfrac{W_{t_f-s}}{t_f-s}\|_\infty \right)
\right)
\\
&\quad+
C\left(1+\sup_{t\in [0,t_f ]}\sup_{x',p'\in\bbR^d}\{| H_R(t,x',p') | + \| \nabla_x V^*(t,x')\|^q_2\}+  \sup_{t\in[s,t_f]}\bbE(\|x+W_{t-s})\|^{2q}_2)^{1/2}
\right)
\\
&\qquad\qquad
 \times 
\sqrt{\log(2d)} \int_{s}^{t_f}   \frac{e^{-\alpha^2(t-s)/8}}{\sqrt{t-s}}{\rm d}t
\\
&\quad+
\int_{s}^{t_f} \bbE\left( 
\left|H_R(t,x+W_{t-s}, \nabla_x V(t,x+W_{t-s}))
- H_R^*(t,x+W_{t-s}, \nabla_x V^*(t,x+W_{t-s}))\right| \right)
\\
&\qquad\qquad
\times\left(2+\frac{\sqrt{2\log(2d)}+\beta}{\sqrt{t-s}} \right)
{\rm d}t
.
\end{aligned}
\end{equation*}
We choose $\alpha = \beta/\sqrt{t-s}$
for some $\beta>0$ to be determined below
and obtain 
\begin{equation*}
    \begin{aligned}
(I)
&\leq
\bbE
\left(
|\Psi(x+ W_{t_f -s}) - \Psi^*(x+ W_{t_f-s}) |
\left(1+\| \tfrac{W_{t_f-s}}{t_f-s}\|_\infty \right)
\right)
\\
&\quad+
2C\left( 1+\sup_{t\in [0,t_f ]}\sup_{x',p'\in\bbR^d}\{| H_R(t,x',p') | + \| \nabla_x V^*(t,x')\|^q_2\} +  \sup_{t\in[s,t_f]}\bbE\left(\|x+W_{t-s})\|^{2q}_2\right)^{1/2} 
\right)
\\
&\qquad\qquad
\times
\sqrt{\log(2d)}  \sqrt{t_f-s}  e^{-\beta^2/8}
\\
&\quad+
\int_{s}^{t_f} \bbE\left( 
\left|H_R(t,x+W_{t-s}, \nabla_x V(t,x+W_{t-s}))
- H_R^*(t,x+W_{t-s}, \nabla_x V^*(t,x+W_{t-s}))\right| \right)
\\
&\qquad\qquad
\times\left(2+\frac{\sqrt{2\log(2d)}+\beta}{\sqrt{t-s}} \right)
{\rm d}t
,
\end{aligned}
\end{equation*}
where the constant $C$ does not depend on the dimension $d$.
Now, 
Fubini's theorem, 
the independent increment property of the Brownian motion $(W_t)_{t\in [0,t_f]}$
implies with~\eqref{eq:Lipschitz_H_Hstar}
as in the derivation of~\cite[Equation~(21)]{HJKN_2020}
that
\begin{equation}\label{eq:tech_lemma_main_step}
\begin{aligned}
(II):=
&\bbE( 
\|
(V(s,x + W_s), \nabla_x V(s,x+W_s))^\top
-
(V^*(s,x+W_s), \nabla_x V^*(s,x+W_s))^\top
\|_\infty
 )
 \\
 &\leq 
 \bbE
\left(
|\Psi(x+ W_{t_f }) - \Psi^*(x+ W_{t_f}) |
\left(1+\| \tfrac{W_{t_f-s}}{t_f-s}\|_\infty \right)
\right)
\\
&\quad+
2C(1+\sup_{t\in [0,t_f ]}\sup_{x',p'\in\bbR^d}\{| H_R(t,x',p') | + \| \nabla_x V^*(t,x')\|^q_2\}
+  \sup_{t\in[s,t_f]}\bbE(\|x+W_{t}\|^{2q})^{1/2} )
\\
&\quad\qquad
\times
\sqrt{\log(2d)}  \sqrt{t_f-s}  e^{-\beta^2/8}
\\
&\quad +
C_{H_R,p}
\int_{s}^{t_f} \bbE\left( 
\|  \nabla_x V(t,x+W_{t}) -  \nabla_x V^*(t,x+W_{t}) \|_\infty
\right)
\left(2+\frac{\sqrt{2\log(2d)}+\beta}{\sqrt{t-s}} \right)
{\rm d}t
\\
&\quad  
+
\sup_{t\in[0,t_f]}
 \bbE\left( 
\left|H_R(t,x+W_{t}, \nabla_x V^*(t,x+W_{t}))
- H_R^*(t,x+W_{t}, \nabla_x V^*(t,x+W_{t}))\right| \right)
\\
&\qquad\qquad
\times
\left(2(t_f -s)  + \left(2\sqrt{2\log(2d)}+2\beta\right) \sqrt{t_f -s}\right)
.
 \end{aligned}
 \end{equation}
 The Cauchy--Schwarz inequality, \eqref{eq:approx_H_Psi} and~\eqref{eq:lp_estimate_max_norm}
 imply that there exists a constant $C>0$ (only depending on $q$) that does not depend on $d$ 
 such that
 \begin{equation*}
 \bbE
\left(
|\Psi(x+ W_{t_f }) - \Psi^*(x+ W_{t_f}) |
\left(1+\| \tfrac{W_{t_f-s}}{t_f-s}\|_\infty \right)
\right)
\leq 
\varepsilon 
C \left(1 + \|x\|^q_2 +\sqrt{t_f d }\right)\left(1 + \frac{\sqrt{\log(2d)}}{\sqrt{t_f -s}}\right)
.
\end{equation*}
 Note that by~\cite[Lemma~4.2(iv)]{HJK_2019}, the gradient of $V^*$ is bounded, i.e., 
 \begin{equation*}
     \sup_{t\in [0,t_f]}\sup_{x'\in\bbR^d}\|\nabla_x V^*(t,x')\|_2
     \leq 
     \sqrt{d}(t_f C_{H_R^*,x} + C_{\Psi^*,x}).
 \end{equation*}
 The boundedness of the gradient of $V^*$, the  
 Cauchy--Schwarz inequality,  and~\eqref{eq:approx_H_Psi} imply
 that there exists a constant $C>0$ that only depends on $q$
 such that
 \begin{equation*}
 \begin{aligned}
 &\sup_{t\in [0,t_f]}\bbE\left( 
|H_R(t,x+W_{t}, \nabla_x V^*(t,x+W_{t}))
- H_R^*(t,x+W_{t}, \nabla_x V^*(t,x+W_{t}))| \right)
\\
&\qquad\leq 
\varepsilon 
C \left(1 + \|x\|^q_2 +\sqrt{t_f}\sqrt{d} + 
d^{q/2}(t_f C_{H_R^*,x} + C_{\Psi^*,x})^q
\right)
.
\end{aligned}
 \end{equation*}
 Thus, by~\eqref{eq:tech_lemma_main_step}
 there exists a constant $C>0$ 
 that does not depend on $d$ such that
 \begin{equation*}
 \begin{aligned}
 (II)
 &\leq
 C(1+ \sup_{t\in [0,t_f]}\sup_{x',p'\in\bbR^d}\{|H_R(t,x',p')|\} +\|x\|^q_2 + \sqrt{t_f d} + d^{q/2}(t_f C_{H_R^*,x} + C_{\Psi^*,x})^q)
(1+\sqrt{\log(d)} + \beta)
\\
&\qquad
\times
\left(1+(t_f-s)  + \frac{1}{\sqrt{t_f -s }}\right)  (e^{-\beta^2/8} + \varepsilon)
\\
&\quad +
C_{H_R,p}
%\sqrt{t_f -s}
\int_{s}^{t_f} \bbE\left( 
\|  \nabla_x V(t,x+W_{t}) -  \nabla_x V^*(t,x+W_{t}) \|_\infty
\right)
\left(2+\frac{\sqrt{2\log(2d)}+\beta}{\sqrt{t-s}} \right)
{\rm d}t
.
 \end{aligned}
 \end{equation*}
 The Gronwall inequality implies that 
 \begin{equation*}
 \begin{aligned}
 (II)
 &\leq 
 C(1+ \sup_{t\in [0,t_f]}\sup_{x',p'\in\bbR^d}\{|H_R(t,x',p')|\} + \|x\|^q_2 + \sqrt{t_f d} + d^{q/2}(t_f C_{H_R^*,x} + C_{\Psi^*,x})^q)
(1+\sqrt{\log(d)} + \beta)
\\
&\quad 
\times\left(1+(t_f-s)  + \frac{1}{\sqrt{t_f -s }}\right)  (e^{-\beta^2/8} + \varepsilon)
 \\
 &\quad \times
 \exp\left( 
 C_{H_R,p}
 2(t_f -s) + 2C_{H_R,p}\left(\sqrt{2\log(2d)} +\beta\right)\sqrt{t_f -s}
 \right) .
 \end{aligned}
 \end{equation*}
 The assertion follows with the choice $\beta = \sqrt{8\log(\varepsilon^{-1})}$,
 where we used the
 fact that
 $\exp(\sqrt{c_0 \log(x)  }) \leq \exp(\sqrt{c_0}/(4\delta)) x^\delta$
 for every $c_0,\delta >0$ and every $x\in [1,\infty)$.
\end{proof}

\section{DNN approximation of the Hamiltonian}
\label{sec:DNN_approx_H}

The Hamiltonian shall be approximated by DNNs with explicit bounds on the Lipschitz constants 
of these DNN approximations.
We shall begin by collecting
some elementary properties of certain ReLU DNNs, 
which will serve us as building blocks in our analysis.

\begin{lemma}[Proposition~2 in \cite{yarotsky_2017}]
\label{lem:DNN_sq}
For every $\varepsilon\in (0,1)$, 
there exists a ReLU DNN $\phi_{{\rm sq},\varepsilon}$
such that for every $x\in [0,1]$
\begin{equation*}
    |x^2 - \phi_{{\rm sq},\varepsilon}(x)|
    \leq \varepsilon
\end{equation*}
and ${\rm size}(\phi_{{\rm sq},\varepsilon})=\calO(\lceil \log(\varepsilon^{-1})\rceil)$.
\end{lemma}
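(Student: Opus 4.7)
The plan is to follow Yarotsky's classical sawtooth construction. First I would define the elementary \emph{tooth} function $g\colon[0,1]\to[0,1]$ by $g(x)=2x$ on $[0,1/2]$ and $g(x)=2(1-x)$ on $[1/2,1]$, and observe that it admits the exact ReLU representation
\begin{equation*}
g(x)=2\sigma_1(x)-4\sigma_1(x-\tfrac12),
\end{equation*}
which is a ReLU DNN of constant size and depth. Then for every $k\in\bbN$ I would consider the $k$-fold self-composition $g_k:=g\circ\cdots\circ g$; this is a sawtooth with $2^{k-1}$ teeth on $[0,1]$, and as a composition of $k$ copies of $g$ it is realized by a ReLU DNN of size $\calO(k)$.

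Next I would exploit the key algebraic identity that links the sawtooths to $x^2$: the piecewise linear interpolant $\Pi_m$ of $x\mapsto x^2$ at the dyadic nodes $\{i/2^m:i=0,\ldots,2^m\}$ satisfies
\begin{equation*}
\Pi_m(x)=x-\sum_{k=1}^{m}\frac{g_k(x)}{4^{k}},\qquad x\in[0,1].
\end{equation*}
This is the crucial step and I would verify it by induction on $m$, using that subtracting $g_{m+1}/4^{m+1}$ halves the linear pieces and produces the next dyadic interpolant. Because $x\mapsto x^2$ is convex with second derivative $2$, the standard error bound for piecewise linear interpolation on a mesh of width $2^{-m}$ yields
\begin{equation*}
\sup_{x\in[0,1]}|x^2-\Pi_m(x)|\leq \tfrac{1}{4}\cdot 2^{-2m}=2^{-2m-2}.
\end{equation*}

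To realize $\Pi_m$ as a single ReLU DNN I would form the sum $x-\sum_{k=1}^m g_k(x)/4^k$ by parallelizing the computations of $g_1,\ldots,g_m$ and the identity map (using $x=\sigma_1(x)-\sigma_1(-x)$), then applying an affine output layer with weights $1,-1/4,\ldots,-1/4^m$. Using the parallelization and composition bounds cited in the paper (e.g.\ \cite[Lemmas~II.5 and II.6]{EDGB_2019}), and noting that the $g_k$ can either be stacked in parallel with depths $1,2,\ldots,m$ or constructed more efficiently by reusing intermediate layers, the resulting DNN has
\begin{equation*}
\size(\phi_{{\rm sq},\varepsilon})=\calO(m).
\end{equation*}

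Finally, setting $m:=\lceil \tfrac12\log_2(\varepsilon^{-1})\rceil+1$ gives $2^{-2m-2}\leq\varepsilon$ and $m=\calO(\lceil\log(\varepsilon^{-1})\rceil)$, which yields the asserted size bound. I do not expect any real obstacle here; the only technical point that requires attention is the inductive verification of the $\Pi_m$ identity and a careful accounting of sizes when parallelizing the composed maps $g_k$, but both are routine once the sawtooth framework is in place.
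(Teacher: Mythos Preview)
Your proposal is correct and reproduces precisely Yarotsky's sawtooth construction from \cite{yarotsky_2017}, which is exactly what the paper cites; the paper itself does not give a proof of this lemma but simply refers to Proposition~2 in \cite{yarotsky_2017}. The only point worth flagging is that the naive parallelization of $g_1,\ldots,g_m$ yields size $\calO(m^2)$, so the $\calO(m)$ bound genuinely relies on the ``reusing intermediate layers'' variant you mention (computing $g_k$ and the running partial sum in two parallel channels layer by layer); since you already note this, the argument is complete.
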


The following lemma is a slight extension of~\cite[Proposition~3]{yarotsky_2017}

\begin{lemma}\label{lem:ReLU_Prod_Lip_bound}
For every $M>0$ and $\delta>0$, there exists a ReLU DNN
such that
for every $x,y\in [-M,M]$
\begin{equation*}
    |xy - \tilde{\times}_\delta(x,y) |
    \leq 
    \delta 
\end{equation*}
and ${\rm size}(\tilde{\times}_\delta)= \calO(\lceil\log(\delta^{-1})\rceil )$. 
Moreover, for 
every $x,x'y,y' \in [-M,M]$
\begin{equation*}
    | \tilde{\times}_\delta(x,y) - \tilde{\times}_\delta(x',y')|
    \leq 
    4M (|x-x'| + |y-y'|)
    .
\end{equation*}
\end{lemma}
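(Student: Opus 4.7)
The plan is to build $\tilde{\times}_\delta$ from the scalar polarization identity $xy=\frac{1}{4}((x+y)^2-(x-y)^2)$, with each square replaced by the ReLU approximation $\phi_{\mathrm{sq},\varepsilon}$ of Lemma~\ref{lem:DNN_sq}, and to handle the box $[-M,M]$ by rescaling to $[-1,1]$ and clipping. Since $\phi_{\mathrm{sq},\varepsilon}$ only approximates $t\mapsto t^2$ on $[0,1]$, I rewrite each squared quantity via its absolute value (using $|\cdot|=\sigma_1(\cdot)+\sigma_1(-\cdot)$, which is an exact ReLU gadget) and exploit $t^2=|t|^2$. Clipping the inputs before evaluation is the key device that allows the claimed Lipschitz estimate to hold on all of $\bbR^2$, not just on $[-M,M]^2$, which is convenient when $\tilde{\times}_\delta$ is later composed with other DNNs whose outputs are only approximately in the required box.

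Concretely, set $\mathrm{clip}(t):=\sigma_1(t+1)-\sigma_1(t-1)-1$, a $1$-Lipschitz ReLU DNN satisfying $\mathrm{clip}(t)=t$ on $[-1,1]$ and $|\mathrm{clip}(t)|\leq 1$ everywhere. Let $u:=\mathrm{clip}(x/M)$, $v:=\mathrm{clip}(y/M)$, so that $|(u\pm v)/2|\in[0,1]$; choose $\varepsilon:=\delta/(2M^2)$ and define
\begin{equation*}
\tilde{\times}_\delta(x,y):=M^2\Bigl[\phi_{\mathrm{sq},\varepsilon}\bigl(|(u+v)/2|\bigr)-\phi_{\mathrm{sq},\varepsilon}\bigl(|(u-v)/2|\bigr)\Bigr].
\end{equation*}
This is a ReLU DNN built by parallelization and composition of constant-size gadgets with two copies of $\phi_{\mathrm{sq},\varepsilon}$; by Lemma~\ref{lem:DNN_sq} its size is $\calO(\lceil\log(\varepsilon^{-1})\rceil)=\calO(\lceil\log(\delta^{-1})\rceil)$, with the $\log M$ absorbed into the implicit constant. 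For $(x,y)\in[-M,M]^2$ one has $u=x/M$, $v=y/M$, so the identity $uv=((u+v)/2)^2-((u-v)/2)^2$ together with the pointwise bound of Lemma~\ref{lem:DNN_sq} yields $|\tilde{\times}_\delta(x,y)-xy|\leq 2M^2\varepsilon=\delta$.

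The main obstacle is the explicit Lipschitz bound, which is \emph{not} stated in Lemma~\ref{lem:DNN_sq}. I rely on the structural information that Yarotsky's construction in \cite{yarotsky_2017} realizes $\phi_{\mathrm{sq},\varepsilon}$ as a piecewise linear interpolant of $t\mapsto t^2$ at dyadic nodes of $[0,1]$; its slopes are thus secant slopes of $t^2$ on $[0,1]$ and lie in $[0,2]$, so $\phi_{\mathrm{sq},\varepsilon}$ is $2$-Lipschitz on $[0,1]$. Chaining this with the $1$-Lipschitz absolute value, the map $x\mapsto(u+v)/2$ (which is $\tfrac{1}{2M}$-Lipschitz in $x$ because $\mathrm{clip}\circ(\cdot/M)$ is $\tfrac{1}{M}$-Lipschitz in $x$), and the outer factor $M^2$, each of the two branches in the display above is $M$-Lipschitz in $x$ and in $y$. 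Summing the two branches yields Lipschitz constant at most $2M$ in each argument, hence
\begin{equation*}
|\tilde{\times}_\delta(x,y)-\tilde{\times}_\delta(x',y')|\leq 2M(|x-x'|+|y-y'|)\leq 4M(|x-x'|+|y-y'|),
\end{equation*}
and because $\mathrm{clip}$ is globally $1$-Lipschitz this estimate extends from $[-M,M]^2$ to all of $\bbR^2$.
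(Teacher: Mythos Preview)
Your proof is correct and follows essentially the same strategy as the paper: build the product network via a polarization identity and Yarotsky's squaring network $\phi_{\mathrm{sq},\varepsilon}$, and derive the Lipschitz bound from the structural fact that $\phi_{\mathrm{sq},\varepsilon}$ is the piecewise-linear interpolant of $t\mapsto t^2$ on $[0,1]$ and hence $2$-Lipschitz. The paper uses the three-term polarization $xy=\tfrac{1}{2}\bigl((x+y)^2-x^2-y^2\bigr)$ with scaling by $2M$ and no clipping, whereas you use the two-term form $xy=\tfrac14\bigl((x+y)^2-(x-y)^2\bigr)$ and prepend a clip to $[-1,1]$; the latter buys you a global Lipschitz estimate on all of $\bbR^2$ (the paper's estimate is only asserted on $[-M,M]^2$) and the slightly sharper constant $2M$ in place of $4M$.
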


\begin{proof}
 By Lemma~\ref{lem:DNN_sq}, 
 for every $\varepsilon\in (0,1)$,
 there exists a ReLU DNN $\phi_{\rm sq,\varepsilon}$ with uniformly (with respect to $\varepsilon$) bounded weights 
 such that 
 \begin{equation}\label{eq:bound_DNN_sq}
     |x^2 - \phi_{{\rm sq},\varepsilon}(x)|
     \leq 
     \varepsilon
     \quad \forall x \in [0,1]
 \end{equation}
 and ${\rm size}(\phi_{{\rm sq}, \varepsilon})=\calO(\lceil \log(\varepsilon^{-1}) \rceil)$.
 Since the mapping $[0,1]\ni x\to \phi_{{\rm sq},\varepsilon}(x)$
 equals the linear interpolant of the mapping $[0,1]\ni x \to x^2$
 on an equispaced grid for every $\varepsilon\in (0,1)$,
 the Lipschitz constant 
 of the ReLU DNN $\phi_{{\rm sq},\varepsilon}$ is upper bounded by $2$ for every $\varepsilon \in (0,1)$.
 For the approximation of the product we apply as in the proof of~\cite[Proposition~3]{yarotsky_2017}
 the following formular and subsequently replace the square by $\phi_{{\rm sq},\varepsilon}$. 
 For any $M>0$ and any $x,y \in \bbR$,
  $
     xy 
     = 
     2M^2
     [ 
     (\tfrac{|x+y|}{2M} )^2
     -(\tfrac{|x|}{2M})^2
     - (\tfrac{|y}{2M})^2
     ]
     $.
 Thus, for any $M>0$ there exists a DNN $\tilde{\times}_{\delta}$ 
 such that 
 \begin{equation*}
     \tilde{\times}_\delta(x,y)
     =
     2 M^2
     \left[ 
     \phi_{{\rm sq},\varepsilon}\left(\frac{|x+y|}{2M} \right)
     -\phi_{{\rm sq},\varepsilon}\left(\frac{|x|}{2M}\right)
     - \phi_{{\rm sq},\varepsilon}\left(\frac{|y|}{2M}\right)
     \right].
 \end{equation*}
 By~\eqref{eq:bound_DNN_sq}, for any $x,y\in\bbR$
 \begin{equation*}
     |xy - \tilde{\times}_\delta(x,y)|
     \leq 
     2M^2 \varepsilon
     ,
 \end{equation*}
 which implies the first assertion with the choice $\varepsilon = \delta /(2M^2)$.
 The Lipschitz continuity of $\phi_{{\rm sg},\varepsilon}$ implies
 that 
 \begin{equation*}
     \begin{aligned}
     |\tilde{\times}_\delta(x,y)
     -
     \tilde{\times}_\delta(x',y')|
     &\leq 
     2M^2\left[  
     \left|\phi_{{\rm sq},\varepsilon}\left(\frac{|x+y|}{2M} \right) - \phi_{{\rm sq},\varepsilon}\left(\frac{|x'+y'|}{2M} \right) \right|
     + 
     \left|\phi_{{\rm sq},\varepsilon}\left(\frac{|x|}{2M}\right) -\phi_{{\rm sq},\varepsilon}\left(\frac{|x'|}{2M}\right)\right|
     \right.
     \\
     &
     \qquad+\left.
     \left|\phi_{{\rm sq},\varepsilon}\left(\frac{|y|}{2M}\right)- \phi_{{\rm sq},\varepsilon}\left(\frac{|y'|}{2M}\right)\right|
     \right]
     \\
     &\leq
     2M
     \left( 
     ||x+y| - |x'+y'||
     + ||x| - |x'|| + ||y| -|y'||
     \right)
     \\
     &\leq 
     4M 
     (|x-x'| + |y-y'|)
     .
     \end{aligned}
 \end{equation*}
\end{proof}

\begin{lemma}\label{lem:ReLU_Matr_vect_prod}
For every $M>0$ and $\delta\in (0,1)$,
there exists a ReLU DNN $\tilde{\times}_{\delta}$
such that for every $A\in [-M,M]^{m\times n}$, every
$b\in [-M,M]^{n}$, $p\in [1,\infty]$
 \begin{equation*}
      \|Ab -  \tilde{\times}_\delta(A,b)  \|_p
     \leq \delta m^{1/p}n 
 \end{equation*}
and ${\rm size}(\tilde{\times}_\delta) = 
\calO((\lceil\log(\delta^{-1})\rceil mn)$
with obvious modifications in the case $p=\infty$.
Moreover, for every $A,A'\in [-M,M]^{m\times n}$, every
$b,b'\in [-M,M]^{n}$, $p\in [1,\infty]$,
\begin{equation*}
    \begin{aligned}
    \| \tilde{\times}_\varepsilon(A,b) - \tilde{\times}_\varepsilon(A',b')\|_p
    &\leq 4M \left(
    \sum_{i=1}^m
    \left( 
    \sum_{j=1}^n
    |A_{ij}  - A'_{ij}|
    +
    |b_j - b'_j|
    \right)^p
    \right)^{1/p}
    \\
    &\leq 
    4M (2n)^{((p-1)/p)}
    (\|A - A'\|_p
    +
    m^{1/p}\|b-b'\|_p)
    \end{aligned}
\end{equation*}
with obvious modifications in the case $p=\infty$.
\end{lemma}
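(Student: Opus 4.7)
The plan is to build $\tilde{\times}_\delta(A,b)$ entrywise out of the scalar approximate product DNN $\tilde{\times}_{\delta'}$ from Lemma~\ref{lem:ReLU_Prod_Lip_bound}. Since the matrix-vector product satisfies $(Ab)_i = \sum_{j=1}^n A_{ij} b_j$, I would first parallelize $mn$ copies of $\tilde{\times}_{\delta'}$, one per pair $(A_{ij}, b_j)$, and then collapse them into a vector of length $m$ using a final affine layer that computes the sums over $j$ for each fixed $i$. The summation can be folded into the last linear map of the parallelized block, so no additional non-linear units are needed. Choosing $\delta' = \delta$ and invoking the size bound of the scalar construction gives ${\rm size}(\tilde{\times}_\delta) = \calO(\lceil \log(\delta^{-1})\rceil mn)$, as asserted.

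For the approximation error, I would estimate componentwise: for every $i \in \{1,\ldots,m\}$,
\begin{equation*}
\left| (Ab)_i - (\tilde{\times}_\delta(A,b))_i \right|
\leq \sum_{j=1}^n \left| A_{ij} b_j - \tilde{\times}_\delta(A_{ij}, b_j) \right|
\leq n\delta,
\end{equation*}
by the first assertion of Lemma~\ref{lem:ReLU_Prod_Lip_bound}. Taking the $\ell^p$-norm over $i$ then yields $\|Ab - \tilde{\times}_\delta(A,b)\|_p \leq m^{1/p} n \delta$, with the usual modification $\|Ab - \tilde{\times}_\delta(A,b)\|_\infty \leq n \delta$ in the limiting case.

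The Lipschitz bound is the only place requiring some care. Applying the scalar Lipschitz bound of Lemma~\ref{lem:ReLU_Prod_Lip_bound} in each coordinate gives
\begin{equation*}
\left| (\tilde{\times}_\delta(A,b))_i - (\tilde{\times}_\delta(A',b'))_i \right|
\leq 4M \sum_{j=1}^n \left( |A_{ij} - A'_{ij}| + |b_j - b'_j| \right),
\end{equation*}
which already establishes the first inequality of the claim upon taking the $\ell^p$-norm in $i$. For the second, cleaner inequality I would apply the power-mean inequality $\sum_{j=1}^n |a_j| \leq n^{(p-1)/p} \bigl( \sum_{j=1}^n |a_j|^p \bigr)^{1/p}$ to the bracket, so that
\begin{equation*}
\sum_{j=1}^n \bigl( |A_{ij}-A'_{ij}| + |b_j-b'_j| \bigr)
\leq (2n)^{(p-1)/p} \left( \sum_{j=1}^n |A_{ij}-A'_{ij}|^p + \sum_{j=1}^n |b_j - b'_j|^p \right)^{1/p},
\end{equation*}
and then take the $\ell^p$-norm in $i$, noting that $\sum_{i=1}^m \sum_{j=1}^n |b_j - b'_j|^p = m \|b-b'\|_p^p$.

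The main technical obstacle is not conceptual but combinatorial: keeping track of the correct norm exponents and of the factor $m^{1/p}$ in the $b$-contribution (which arises because the same vector $b$ enters every row). The case $p = \infty$ is handled analogously, replacing sums by maxima and dropping the factor $(2n)^{(p-1)/p}$; no further modifications of the construction are needed.
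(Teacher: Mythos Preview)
Your proposal is correct and follows exactly the approach the paper indicates: a componentwise application of Lemma~\ref{lem:ReLU_Prod_Lip_bound} together with the H\"older (power-mean) inequality for the second Lipschitz estimate. One tiny slip: in the case $p=\infty$ the factor $(2n)^{(p-1)/p}$ tends to $2n$ rather than disappearing, but this is precisely the sort of ``obvious modification'' the statement already anticipates.
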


\begin{proof}
The assertion follows by a component-wise application of
Lemma~\ref{lem:ReLU_Prod_Lip_bound} and the H\"older inequality.
\end{proof}

\begin{proposition}\label{prop:DNN_approx_H}
Let $q\in [1,\infty)$.
 Suppose for every $\delta_1,\delta_2,\delta_{\bar{L}}\in (0,1)$, there exist DNNs $\phi_{f_1,\delta_1}$,
 $\phi_{f_2,\delta_2}$, $\phi_{\bar{L},\delta_{\bar{L}}}$
 such that for every $t\in [0,t_f]$
 and every $x\in \bbR^d$
 \begin{equation*}
     \|f_{1}(t,x) - \phi_{f_1,\delta_1}(t,x)\|_2,
     \leq \delta_1(1 + \|x\|_2^q),
 \end{equation*}
 \begin{equation*}
     \|f_{2}(t,x) - \phi_{f_2,\delta_2}(t,x)\|_2 \leq \delta_2(1 + \|x\|_2^q),
 \end{equation*}
 and 
 \begin{equation*}
     |\bar{L}(t,x) - \phi_{\bar{L},\delta_{\bar{L}}}(t,x)| \leq \delta_{\bar{L}}(1 + \|x\|_2^q).
 \end{equation*}
 Suppose that there exist $C,\kappa>0$ 
 such that for every $d,\bar{d}\in\bbN$,
 \begin{equation*}
     \sup_{t\in [0,t_f]}\sup_{x\in\bbR^d}\{\|\phi_{f_1,\delta_1}(t,x)\|_\infty
     +
     \|\phi_{f_2,\delta_2}(t,x)\|_\infty
     +\|\phi_{\bar{L},\delta_{\bar{L}}}(t,x)\|_\infty\}
     \leq C d^\kappa \bar{d}^\kappa
     .
 \end{equation*}
 Moreover, we assume that ${\rm  size}(\phi_{f_1,\delta_1})= \calO( d^{\kappa_1} \delta_1^{-\kappa_1})$,
 ${\rm  size}(\phi_{f_2,\delta_2})= \calO( d^{\kappa_2} \delta_2^{-\kappa_2})$, 
 and 
 ${\rm  size}(\phi_{\bar{L},\delta_{\bar{L}}})= \calO( d^{\kappa_L} \delta_{\bar{L}}^{-\kappa_L})$. 
 Then, for every $\delta \in (0,1)$, 
 there exists a DNN $\phi_{H,\delta}$
 such that for every $x\in\bbR^d$,
 every $p\in [-R,R]^d$, and every $t\in [0,t_f]$
 \begin{equation*}
     |H(t,x,p) - \phi_{H,\delta}(t,x,p)| \leq \delta(1 + \|x\|^{q}_2  ).
 \end{equation*}
 Moreover, it holds that ${\rm size } (\phi_{H,\delta}) = \calO(d^{\max\{ \kappa_1,\kappa_2,\kappa_{\bar{L}}   \}+ \kappa_2 } \bar{d}^{\kappa_2} \delta^{-\max\{ \kappa_1,\kappa_2,\kappa_{\bar{L}}   \}} + d \bar{d}\log(d \bar{d}) \lceil\log(\delta^{-1})\rceil) $.
\end{proposition}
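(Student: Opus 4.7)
The plan is to construct $\phi_{H,\delta}$ directly from the explicit formula of Lemma~\ref{lem:formula_H},
\begin{equation*}
H(t,x,p) = p^\top f_1(t,x) + p^\top f_2(t,x)\bar u(t,x,p) + \bar L(t,x) + \gamma\|\bar u(t,x,p)\|_2^2,
\end{equation*}
where $\bar u(t,x,p)_i = \min\{\max\{-(f_2(t,x)^\top p)_i/(2\gamma),a_i\},b_i\}$. The clipping defining $\bar u$ is piecewise linear and hence \emph{exactly} representable by a small ReLU network via $\min\{y,b\}=y-\sigma_1(y-b)$ and $\max\{y,a\}=y+\sigma_1(a-y)$, so all approximation error originates in the hypothesised DNNs $\phi_{f_1,\delta_1}$, $\phi_{f_2,\delta_2}$, $\phi_{\bar L,\delta_{\bar L}}$ and in the approximate (matrix-)vector products of Lemmas~\ref{lem:ReLU_Prod_Lip_bound} and~\ref{lem:ReLU_Matr_vect_prod}.

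The construction proceeds in four stages. First, parallelize $\phi_{f_1,\delta_1}$, $\phi_{f_2,\delta_2}$, $\phi_{\bar L,\delta_{\bar L}}$ with an identity channel carrying $p$; since $p\in[-R,R]^d$ and each $\phi_{f_i}$ has $\ell_\infty$-norm $\calO(d^{\kappa}\bar d^{\kappa})$ by hypothesis, all downstream signals live in an $\ell_\infty$-box of radius $M=\calO(R\,d^{\kappa}\bar d^{\kappa})$. Second, apply the approximate matrix-vector product of Lemma~\ref{lem:ReLU_Matr_vect_prod} to $\phi_{f_2,\delta_2}(t,x)^\top$ and $p$, rescale by $-1/(2\gamma)$, and apply the coordinatewise exact clipping to obtain $\tilde u\approx\bar u$. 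Third, form the three scalar contractions $p^\top\phi_{f_1,\delta_1}$, $(\phi_{f_2,\delta_2}^\top p)^\top\tilde u$ (reusing the already computed $\phi_{f_2,\delta_2}^\top p$), and $\|\tilde u\|_2^2=\sum_i \tilde u_i^2$, by further applications of Lemmas~\ref{lem:ReLU_Prod_Lip_bound} and~\ref{lem:ReLU_Matr_vect_prod}. Fourth, a final affine layer sums these with $\phi_{\bar L,\delta_{\bar L}}$ and $\gamma\|\tilde u\|_2^2$.

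The error estimate is a triangle-inequality decomposition into four contributions, one per summand. For $\bar L$ the hypothesis gives $\delta_{\bar L}(1+\|x\|_2^q)$ directly. For $p^\top f_1$, splitting as $p^\top(f_1-\phi_{f_1,\delta_1})+(p^\top\phi_{f_1,\delta_1}-\widetilde\times(p,\phi_{f_1,\delta_1}))$ yields $R\sqrt d\,\delta_1(1+\|x\|_2^q)+\calO(\eta d)$, where $\eta$ is the product tolerance. The $p^\top f_2\bar u$ summand is the most delicate: I split it into (a) replacing $f_2$ by $\phi_{f_2,\delta_2}$ in the matrix-vector product with $p$, (b) exchanging $\bar u$ for $\tilde u$, using that clipping is $1$-Lipschitz so that $\|\tilde u-\bar u\|_\infty\leq \|f_2^\top p-\widetilde\times(\phi_{f_2,\delta_2}^\top,p)\|_\infty/(2\gamma)$, and (c) replacing the two remaining exact products by the approximate ones, using the a priori bounds $\|\tilde u\|_\infty\leq\max(\|a\|_\infty,\|b\|_\infty)$ and $\|p\|_\infty\leq R$. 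The quadratic $\gamma\|\tilde u\|_2^2$ is handled analogously through Lemma~\ref{lem:ReLU_Prod_Lip_bound}. Choosing $\delta_1,\delta_2,\delta_{\bar L}$ and every product tolerance to be $\delta$ divided by a fixed polynomial in $d,\bar d,R$ gives a total error at most $\delta(1+\|x\|_2^q)$, and the size estimate then follows from the parallelization and composition rules of~\cite[Lemmas~II.5 and~II.6]{EDGB_2019}: the first term in the stated bound comes from the $\phi_{f_i}$ and $\phi_{\bar L}$ DNNs after inserting their rescaled tolerances, while the approximate products contribute $\calO(d\bar d\log(d\bar d)\lceil\log\delta^{-1}\rceil)$, since $\log M=\calO(\log(d\bar d))$ by the uniform bounds.

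The main obstacle I expect is careful bookkeeping rather than a conceptual difficulty: I must keep every intermediate signal in an explicit $\ell_\infty$-box of polynomial-in-$d,\bar d$ width so that Lemmas~\ref{lem:ReLU_Prod_Lip_bound} and~\ref{lem:ReLU_Matr_vect_prod} apply with a controlled $M$, and I must track how the perturbation of $\phi_{f_2,\delta_2}^\top p$ propagates through the nonlinear clipping into the two further occurrences of $\tilde u$ (inside the bilinear and the quadratic terms) without accumulating superpolynomial factors. The $1$-Lipschitz property of clipping together with the hard a priori bound $\tilde u\in\prod_i[a_i,b_i]$ prevents any such blow-up and keeps every dependence on $d,\bar d,\delta^{-1}$ polynomial.
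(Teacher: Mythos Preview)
Your proposal is correct and follows essentially the same approach as the paper: define $\phi_{H,\delta}$ by plugging DNN surrogates into the explicit formula of Lemma~\ref{lem:formula_H}, build $\tilde u$ from an approximate $\phi_{f_2}^\top p$ followed by exact ReLU clipping, and bound the error term by term via the triangle inequality, the $1$-Lipschitz property of clipping, and the a priori box bounds on $p$ and $\tilde u$. The only cosmetic differences are that the paper groups the bilinear part as a single product $\tilde\times\bigl(p^\top,\phi_{f_1}+\tilde\times(\phi_{f_2},\tilde u)\bigr)$ whereas you split off $p^\top\phi_{f_1}$ and reuse the already computed $\phi_{f_2}^\top p$ in $(\phi_{f_2}^\top p)^\top\tilde u$, and the paper realises $\|\tilde u\|_2^2$ via a rescaled $\phi_{{\rm sq}}$ rather than the product DNN; neither change affects the error or size asymptotics.
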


\begin{proof}
 There exists a DNN $\phi_{H,\delta}$
 such that
 \begin{equation}
 \label{eq:def_DNN_H}
 \begin{aligned}
 \phi_{H,\delta}(t,x,p)
 &=
 \tilde{\times}_{\delta_{\rm prod}}(p^\top,
 \phi_{f_1,\delta_1}(t,x) + 
 \tilde{\times}_{\delta_{\rm prod}}(\phi_{f_2,\delta_2}(t,x),\tilde{u}(t,x,p)))
 \\
 &\quad+ 
 \phi_{\bar{L},\delta_{\bar{L}}}(t,x) + \gamma
 \sum_{i=1}^{\bar{d}}
 \max\{|a_i|^2, |b_i|^2\}
 \phi_{{\rm sq},\delta_{\rm sq}}\left(\frac{(\tilde{u}(t,x,p))_i}{\max\{|a_i|,|b_i|\}}\right)
 ,
 \end{aligned}
 \end{equation}
 where the coordinates of $\tilde{u}$ are given by
 \begin{equation*}
 (\tilde{u}(t,x,p))_i
    =
    \min\{ \max\{- (\tilde{\times}_{\delta_{\rm prod}}(\phi_{f_2,\delta_2}(t,x)^\top, p) )_i /(2\gamma)  , a_i\}, b_i\}, 
    \quad 
    i=1,\ldots, \bar{d}
    .
 \end{equation*}
 The DNNs $\tilde{\times}_{\rm prod}$
 and $\phi_{{\rm sq},\delta_{\rm sq}}$
 are applied with values from bounded sets, 
 we shall not indicate this in our notation.
 The expression of $\phi_{H,\delta}$
 has three terms. The second does not pose problems in the analysis of the error. 
 The first and third term include compositions of several DNNs. 
 These need to be treated in detail.
 
 The fact that for any $y_1,y_2,c,d \in \bbR $
 it holds that 
 \begin{equation}\label{eq:Lipschitz_min_max}
 |\min\{\max\{y_1,c\},d\} - \min\{\max\{y_2,c\},d\}|
 \leq 
 \min\{\max\{2|c|,2|d|\}, |y_1-y_2|\}
 \end{equation}
 implies with Lemma~\ref{lem:formula_H} that
 \begin{equation*}
 |u_i(t,x,p) - \tilde{u}_i(t,x,p)|
 \leq 
 \min\{\max\{2|a_i|, 2|b_i|\}, | (f_2(t,x)^\top p)_i - (\tilde{\times}_{\delta_{\rm prod}}(\phi_{f_2,\delta_2}(t,x)^\top, p) )_i| /(2\gamma)     \} 
 .
 \end{equation*}
By Lemma~\ref{lem:ReLU_Matr_vect_prod} it holds that 
\begin{equation}\label{eq:bounde_f2_phi2}
\begin{aligned}
&| (f_2(t,x)^\top p)_i - (\tilde{\times}_\delta(\phi_{f_2,\delta_2}(t,x)^\top, p) )_i|
\\
&\quad\leq 
|((f_2(t,x)  -   \phi_{f_2,\delta_2}(t,x))^\top p)_i|
+
|(\phi_{f_2,\delta}(t,x)^\top p)_i -  (\tilde{\times}_\delta(\phi_{f_2,\delta_2}(t,x)^\top, p) )_i|
\\
&\quad\leq 
\|f_2(t,x)  -   \phi_{f_2,\delta_2}(t,x)\|_2\|p\|_2
+
\delta_{\rm prod}
d
\\
&\quad \leq 
\delta_2(1+\|x\|_2^q )\|p\|_2
+
\delta_{\rm prod}
d
.
\end{aligned}
\end{equation}
In conclusion, it holds that 
\begin{equation}\label{eq:estimate_0}
\begin{aligned}
&|u_i(t,x,p) - \tilde{u}_i(t,x,p)|
 \\
 &\quad \leq 
 \min\{\max\{2|a_i|, 2|b_i|\},
  \delta_2(2\gamma)^{-1}(1+\|x\|_2^q )\|p\|_2
+
\delta_{\rm prod}(2\gamma)^{-1}
d
.
\end{aligned}
\end{equation}
We obtain the following bound for the error related to the forth term
in~\eqref{eq:def_DNN_H}
\begin{equation*}
\begin{aligned}
&\left|\|u(t,x,p)\|^2_2 
-
\sum_{i=1}^{\bar{d}}
\max\{|a_i|, |b_i|\}
\phi_{{\rm sq},\delta_{\rm sq}}
\left(
\frac{\tilde{u}_i}{\max\{|a_i|,|b_i|\}}
\right)\right|
\\
&\qquad\leq 
|\|u(t,x,p)\|^2_2  - \|\tilde{u}_i\|_2^2   |
+ 
\sum_{i=1}^{\bar{d}}
\left|\tilde{u}_i^2 - \max\{|a_i|, |b_i|\}
\phi_{{\rm sq},\delta_{\rm sq}}
\left(
\frac{\tilde{u}_i}{\max\{|a_i|,|b_i|\}}
\right) \right|
\\
&\qquad\leq 
2 \sum_{i=1}^{\bar{d}}
\max\{|a_i|, |b_i|\}
|u_i(t,x,p)  - \tilde{u}_i   |
+ \bar{d}\max\{\|a\|_\infty,\|b\|_\infty\} \delta_{\rm sq}
\\
&\qquad\leq 
2 
\bar{d}\max\{\|a\|_\infty,\|b\|_\infty\}
[
  \delta_2(2\gamma)^{-1}(1+\|x\|_2^q )\|p\|_2
+
\delta_{\rm prod}(2\gamma)^{-1}
d
  \\
&\quad\qquad
+\bar{d}\max\{\|a\|_\infty,\|b\|_\infty\} \delta_{\rm sq}
\\ 
&\qquad\leq 
\max\{\delta_2,\delta_{\rm prod}, \delta_{\rm sq}\}
C
\bar{d}\max\{\|a\|_\infty,\|b\|_\infty\}
d
,
\end{aligned}
\end{equation*}
where the constant $C>0$ does not depend on $\bar{d},d,a,b$. 

By Lemma~\ref{lem:ReLU_Matr_vect_prod} and elementary manipulations, 
we estimate the second term on the right hand side of~\eqref{eq:def_DNN_H}
as follows
\begin{equation}\label{eq:estimate_1}
    \begin{aligned}
    &| p^\top f_2(t,x) u(t,x,p)
    -
    \tilde{\times}_{\delta_{\rm prod}}(p^\top, \tilde{\times}_{\delta_{\rm prod}}(\phi_{f_2,\delta_2}(t,x),\tilde{u}(t,x,p)))|
    \\
    &\quad \leq 
    | p^\top (f_2(t,x) u(t,x,p) -
    \phi_{f_2,\delta_2}(t,x) u(t,x,p)
    )|
    \\
    &\qquad + 
    |p^\top \phi_{f_2,\delta_2}(t,x) (u(t,x,p)- \tilde{u}(t,x,p)    )|
    \\
    &\qquad+
    |p^\top (\phi_{f_2,\delta_2}(t,x) \tilde{u}(t,x,p)
    -
    \tilde{\times}_{\delta_{\rm prod}}(\phi_{f_2,\delta_2}(t,x),\tilde{u}(t,x,p)))  | \\
    &\qquad+
    |p^\top \tilde{\times}_{\delta_{\rm prod}}(\phi_{f_2,\delta_2}(t,x),\tilde{u}(t,x,p))
    - \tilde{\times}_{\delta_{\rm prod}}(p^\top, \tilde{\times}_{\delta_{\rm prod}}(\phi_{f_2,\delta_2}(t,x),\tilde{u}(t,x,p)))| 
    \\
    &\quad\leq 
     \delta_2(1+ \|x\|_2^q)\|p\|_2 \max\{\|a\|_\infty,\|b\|_\infty\} 
     + 
     \|p\|_2 \| \phi_{f_2,\delta_2}(t,x)\|_2 \|u(t,x,p)- \tilde{u}(t,x,p) \|_2
     \\
     &\qquad+ 
     \|p\|_2 \delta_{\rm prod}
     d^{1/2}\bar{d}
     + 
     \delta_{\rm prod} 
     d
     .
    \end{aligned}
\end{equation}
The estimates in~\eqref{eq:estimate_0}
and \eqref{eq:estimate_1}
result in
\begin{equation*}
    \begin{aligned}
    &| p^\top f_2(t,x) u(t,x,p)
    -
    \tilde{\times}_{\delta_{\rm prod}}(p^\top, \tilde{\times}_{\delta_{\rm prod}}(\phi_{f_2,\delta_2}(t,x),\tilde{u}(t,x,p)))|
    \\
    &\quad\leq 
    C \max\{\delta_2,\delta_{\rm prod}\}
    (d+d^{1/2} \bar{d} + \|x\|_2^q)
    .
    \end{aligned}
\end{equation*}
The assertion now follows upon a corresponding bound for 
$|p^\top (f_1 -  \phi_{f_1,\delta_1})|$ (see~\eqref{eq:bounde_f2_phi2}) with assumed approximation properties of $\phi_{f_1,\delta_1}$, $\phi_{f_2,\delta_2}$, and $\phi_{\bar{L}, \delta_{\bar{L}}}$
with the choices 
$ \delta_1\simeq \delta_2  \simeq \delta_{\rm prod} \simeq \delta_{\rm sq }
\simeq \delta /(d+d^{1/2} \bar{d})$ 
and $ \delta_{\bar{L}} \simeq \delta$.
\end{proof}

In the following, we study the Lipschitz continuity of DNN approximations 
of the Hamiltonian. 
We shall remark that the function $\chi_R$ may be represented exactly by a ReLU DNN. 

\begin{remark}\label{rmk:clip_RelU_DNN}
For every $R'>0$, there exists a ReLU DNN $\phi$ with size that does not depend on $R'$ 
such that $\chi_{R'}(y) = \phi(y)$ for every $y\in \bbR$, cf.~\cite[Lemma~A.1]{BGJ_2018}.
\end{remark}

\begin{proposition}\label{prop:DNN_Lipschitz}
 Let the assumptions of Proposition~\ref{prop:DNN_approx_H}
 be satisfied.
 Let the DNNs $\phi_{f_1,\delta}$, $\phi_{f_2,\delta}$, 
 $\phi_{\bar{L},\delta_{\bar{L}}}$ be globally Lipschitz continuous with respect to the second variable and the $1$-norm
 with corresponding
 Lipschitz constants 
 $C_{1,x}$, $C_{2,x}$, $C_{\bar{L},x}$.
 Then the DNN satisfying 
 \begin{equation}\label{eq:DNN_phi_H_R}
 \begin{aligned}
 \phi_{H,\delta,R}(t,x,p)
 &=
 \tilde{\times}_{\delta_{\rm prod}}(\chi_R(p)^\top,
 \phi_{f_1,\delta_1}(t,x) + 
 \tilde{\times}_{\delta_{\rm prod}}(\phi_{f_2,\delta_2}(t,x),\tilde{u}_R(t,x,p)))
 \\
 &\quad+ 
 \phi_{\bar{L},\delta_{\bar{L}}}(t,x) + \gamma
 \sum_{i=1}^{\bar{d}}
 \max\{|a_i|^2, |b_i|^2\}
 \phi_{{\rm sq},\delta_{\rm sq}}\left(\frac{(\tilde{u}_R(t,x,p))_i}{\max\{|a_i|,|b_i|\}}\right)
 ,
 \end{aligned}
 \end{equation}
 where the coordinates of $\tilde{u}_R$ are given by
 \begin{equation*}
 (\tilde{u}_R(t,x,p))_i
    =
    \min\{ \max\{- (\tilde{\times}_{\delta_{\rm prod}}(\phi_{f_2,\delta_2}(t,x)^\top, \chi_R(p)) )_i /(2\gamma)  , a_i\}, b_i\} 
    \quad 
    i=1,\ldots, \bar{d}
    ,
 \end{equation*}
 satisfies
 for every $t\in [0,t_f]$, $x,x',p,p' \in\bbR^d$,
 \begin{equation*}
 \begin{aligned}
     &|\phi_{H,\delta,R}(t,x,p) - \phi_{H,\delta,R}(t,x',p')   |
     \\
     &\quad\leq 
    \bar{C} 
    \left(
     (1+C_{\bar{L},x} + R C_{1,x} + C_{2,x} )\|x-x'\|_1  
     + \|p-p'\|_1
     \right)
     \end{aligned}
 \end{equation*}
 with 
 \begin{equation*}
     \bar{C}
     =
     \widetilde{C}
    d\bar{d}
    (1+R)^2
    \left(1+\sup_{t\in [0,t_f],x\in\bbR^d}\{\|\phi_{f_1,\delta_1}(t,x)\|_\infty +\|\phi_{f_2,\delta_2}(t,x)\|_\infty \}\right)^3,
 \end{equation*}
 where the constant $\widetilde{C}$ is a generic constant
 independent of $d$, $\bar{d}$, $C_{1,x}$, $C_{2,x}$, $C_{\bar{L},x}$, $R$
 that depends on $\gamma$, $\|a\|_\infty$, 
and $\|b\|_\infty$.
\end{proposition}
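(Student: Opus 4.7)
The plan is to propagate Lipschitz estimates through the compositional structure of $\phi_{H,\delta,R}$ using the building-block bounds of Lemmas~\ref{lem:ReLU_Prod_Lip_bound} and~\ref{lem:ReLU_Matr_vect_prod}, the fact that $\chi_R$ is $1$-Lipschitz with range $[-R,R]$ (Remark~\ref{rmk:clip_RelU_DNN}), and the $1$-Lipschitz continuity of the coordinate-wise projection $y\mapsto \min\{\max\{y,a_i\},b_i\}$. Throughout, let $M_0 := 1 + \sup_{t,x}\{\|\phi_{f_1,\delta_1}(t,x)\|_\infty + \|\phi_{f_2,\delta_2}(t,x)\|_\infty\}$ so that all inputs fed into the product DNNs lie in $[-\widetilde{M},\widetilde{M}]$ for some $\widetilde{M}\leq C(R+1)M_0$; this bounds the constant $4M$ arising in Lemmas~\ref{lem:ReLU_Prod_Lip_bound}--\ref{lem:ReLU_Matr_vect_prod}.

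The first step is to bound the Lipschitz constant of the auxiliary mapping $\tilde{u}_R(t,x,p)$. Since $\chi_R$ is $1$-Lipschitz, and the outer projection is $1$-Lipschitz in its first slot (by equation~\eqref{eq:Lipschitz_min_max}), we have $\|\tilde{u}_R(t,x,p)-\tilde{u}_R(t,x',p')\|_1 \leq (2\gamma)^{-1}\|\tilde{\times}_{\delta_{\rm prod}}(\phi_{f_2,\delta_2}(t,x)^\top,\chi_R(p)) - \tilde{\times}_{\delta_{\rm prod}}(\phi_{f_2,\delta_2}(t,x')^\top,\chi_R(p'))\|_1$. Applying the Lipschitz bound of Lemma~\ref{lem:ReLU_Matr_vect_prod} with $M\simeq (R+1)M_0$ and using the hypothesis that $\phi_{f_2,\delta_2}$ has Lipschitz constant $C_{2,x}$ in $x$ (in the $1$-norm), I obtain
\begin{equation*}
\|\tilde{u}_R(t,x,p)-\tilde{u}_R(t,x',p')\|_1 \leq \widetilde{C}\,d\bar{d}(R+1)M_0\bigl(C_{2,x}\|x-x'\|_1 + \|p-p'\|_1\bigr)
\end{equation*}
for a generic constant $\widetilde{C}$ depending only on $\gamma$.

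The second step handles the first summand of $\phi_{H,\delta,R}$. I apply Lemma~\ref{lem:ReLU_Prod_Lip_bound} to the outer product $\tilde{\times}_{\delta_{\rm prod}}(\chi_R(p)^\top,\cdot)$ together with Lemma~\ref{lem:ReLU_Matr_vect_prod} to the inner product $\tilde{\times}_{\delta_{\rm prod}}(\phi_{f_2,\delta_2}(t,x),\tilde{u}_R(t,x,p))$. The Lipschitz constant of the inner expression in $(x,p)$ is bounded by $\widetilde{C}\,d\bar{d}(R+1)M_0 \bigl((C_{2,x}+C_{2,x}M_0)\|x-x'\|_1 + M_0\|p-p'\|_1\bigr)$, where I have separated the contributions from $\phi_{f_2,\delta_2}$ (via its $x$-Lipschitz constant $C_{2,x}$) and from $\tilde{u}_R$. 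Adding the Lipschitz bound from $\phi_{f_1,\delta_1}$ (constant $C_{1,x}$) to the sum inside, and then composing with the outer $\tilde{\times}_{\delta_{\rm prod}}(\chi_R(p)^\top,\cdot)$ using that $\|\chi_R(p)\|_\infty\leq R$ (so the outer Lipschitz factor is $4(R+1)M_0$ times a bookkeeping factor polynomial in $d\bar{d}$), this term contributes a Lipschitz constant of order $\widetilde{C}\,d\bar{d}(R+1)^2 M_0^3\bigl((RC_{1,x}+C_{2,x})\|x-x'\|_1 + \|p-p'\|_1\bigr)$.

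The third and fourth steps are routine. The term $\phi_{\bar{L},\delta_{\bar{L}}}(t,x)$ contributes $C_{\bar{L},x}\|x-x'\|_1$ directly. For the quadratic sum, I use that $\phi_{{\rm sq},\delta_{\rm sq}}$ is $2$-Lipschitz on $[0,1]$ (recorded inside the proof of Lemma~\ref{lem:ReLU_Prod_Lip_bound}), so each summand is $C\max\{|a_i|,|b_i|\}$-Lipschitz in the corresponding entry of $\tilde{u}_R/\max\{|a_i|,|b_i|\}$; composing with the Lipschitz bound on $\tilde{u}_R$ from Step~1 produces a contribution of the same order (absorbed into the $\widetilde{C}$ depending on $\gamma,\|a\|_\infty,\|b\|_\infty$). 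Combining all four contributions through the triangle inequality yields precisely the claimed bound
\begin{equation*}
|\phi_{H,\delta,R}(t,x,p)-\phi_{H,\delta,R}(t,x',p')|\leq \bar{C}\bigl((1+C_{\bar{L},x}+RC_{1,x}+C_{2,x})\|x-x'\|_1 + \|p-p'\|_1\bigr),
\end{equation*}
with $\bar{C}=\widetilde{C}\,d\bar{d}(1+R)^2 M_0^3$. The main obstacle is simply careful bookkeeping so that the constant $\bar{C}$ ends up polynomial in $d\bar{d}(1+R)M_0$ of the correct order, and in particular that the factor $R$ multiplies only $C_{1,x}$ (since $\chi_R(p)$ of norm $R$ multiplies the $f_1$-term in the outer product) and not $C_{2,x}$; this structural point drives the separation of $RC_{1,x}+C_{2,x}$ in the final estimate.
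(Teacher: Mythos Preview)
Your proposal is correct and follows essentially the same approach as the paper: both arguments exploit the principle that the Lipschitz constant of a composition is the product of the individual Lipschitz constants (this is stated explicitly in the paper's proof as equation~\eqref{eq:tech_trick_Lip_const}), and both propagate this through the compositional structure of $\phi_{H,\delta,R}$ using Lemmas~\ref{lem:ReLU_Prod_Lip_bound}--\ref{lem:ReLU_Matr_vect_prod}, the $1$-Lipschitz property of $\chi_R$ and of the coordinate projection, and the $2$-Lipschitz property of $\phi_{{\rm sq},\delta_{\rm sq}}$.

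The only cosmetic differences are that the paper treats the Lipschitz constants in $p$ and in $x$ separately (first bounding $C_{\tilde u_R,p}$ and $C_{\tilde u_R,x}$, then $C_{\phi_{H,\delta,R},p}$ and $C_{\phi_{H,\delta,R},x}$), whereas you estimate the joint variation in $(x,p)$ at each stage; and that your final remark singling out why $R$ multiplies $C_{1,x}$ is a structural rationalization that the paper does not make---in fact the paper's bookkeeping produces simply $(1+R^2)(1+C_{\bar L,x}+C_{1,x}+C_{2,x})$ in the $x$-Lipschitz constant, which is then absorbed into the stated form. Neither difference affects the validity of the argument.
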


\begin{proof}
Since the maximum and the minimum and also the function $\chi_R$ may be represented by ReLU DNNs, cf.~Remark~\ref{rmk:clip_RelU_DNN}, 
there exists a DNN satisfying~\eqref{eq:DNN_phi_H_R}.
 Our main tool in this proof will be the simple observation that the Lipschitz constant of the composition of two functions is the product of 
 the individual Lipschitz constants,
 which follows readily.
 Let $F,G$ be two compatiple Lipschitz mappings, 
 then for every $x,y$ in the domain of $G$
 it holds (formally) that 
 \begin{equation}\label{eq:tech_trick_Lip_const}
     \| F(G(x)) - F(G(y))\|
     \leq 
     C_F \|G(x) - G(y)\|
     \leq 
     C_F C_G \|x - y\|, 
 \end{equation}
where $C_F,C_G$ denote the respective Lipschitz constants and $\|\cdot\|$ shall denote a (not necessarily the same)
appropriate norm.

The Lipschitz constant of the coordinates of $\tilde{u}_R$
with respect to the third argument $p$ is
$$\frac{1}{2\gamma}4\max\left\{R,\sup_{t\in [0,t_f], x\in\bbR^d} \|\phi_{f_2,\delta_2}(t,x)\|_\infty\right\}, $$ 
which follows by~\eqref{eq:tech_trick_Lip_const}, Lemma~\ref{lem:ReLU_Matr_vect_prod},
and the observation that the Lipschitz constant of $\chi_R$ and the maximum and mininum is bounded by one.
Thus, the Lipschitz constant of $\tilde{u}_R$ with respect to $p$ and 
the $1$-norm is 
\begin{equation*}
    C_{\tilde{u}_R,p}
    :=
    \frac{1}{\gamma}\bar{d}2\max\left\{R,\sup_{x\in\bbR^d} \|\phi_{f_2,\delta_2}(x)\|_\infty\right\}
    .
\end{equation*}
Similarly, it holds that the Lipschitz constant of $\tilde{u}_R$ with respect to $x$ and 
the $1$-norm is 
\begin{equation*}
    C_{\tilde{u}_R,x}
    :=
    \frac{1}{\gamma}\bar{d} C_{2,x}2\max\left\{R,\sup_{x\in\bbR^d} \|\phi_{f_2,\delta_2}(x)\|_\infty\right\}
    .
\end{equation*}
The expression of the DNN approximation $\phi_{H,\delta,R}$
has three terms. The respective Lipschitz contants are additive. 
We shall begin by analyzing the Lipschitz constant with respect to the third variable $p$.
By Lemma~\ref{lem:ReLU_Matr_vect_prod} and~\eqref{eq:tech_trick_Lip_const}, 
the  Lipschitz constant of $\phi_{H,\delta,R}$ with respect to $p$
and the $1$-norm
is
\begin{equation*}
\begin{aligned}
C_{\phi_{H,\delta,R},p}
&\leq 
    d C_{\tilde{u}_R,p} 16 \max\{\sup_{x\in\bbR^d}\|\phi_{f_2,\delta_2}(x)\|_\infty,\|a\|_\infty,\|b\|_\infty\}
    \\
    &\quad\times
    \max\left\{R,\sup_{x\in\bbR^d}\|\phi_{f_1,\delta_1}(x)\|_\infty+\sup_{x\in\bbR^d}\|\phi_{f_2,\delta_2}(x)\|_\infty\max\{\|a\|_\infty,\|b\|_\infty\}\right\}
    \\
    &\quad+
    2 C_{\tilde{u}_R,p}\gamma \max\{\|a\|_\infty,\|b\|_\infty\}
    \\
    &\leq 
    \widetilde{C}
    d\bar{d}
    (1+R^2)\left(1+\sup_{t\in [0,t_f],x\in\bbR^d}\{\|\phi_{f_1,\delta_1}(t,x)\|_\infty +\|\phi_{f_2,\delta_2}(t,x)\|_\infty \}\right)^3
    ,
\end{aligned}
\end{equation*}
where $\widetilde{C}>0$
only depends on $\gamma,\|a\|_\infty,\|b\|_\infty$.
By the same reasoning, the Lipschitz constant of $\phi_{H,\delta,R}$ 
with respect to $x$ denoted by $C_{\phi_{H,\delta,R},x}$
satisfies
\begin{equation*}
  C_{\phi_{H,\delta,R},x}
  \leq 
  \widetilde{C}d \bar{d}
  (1+R^2)
  (1+C_{\bar{L},x} + C_{1,x} + C_{2,x} )
  \left(1+\sup_{t\in [0,t_f],x\in\bbR^d}\{\|\phi_{f_1,\delta_1}(t,x)\|_\infty +\|\phi_{f_2,\delta_2}(t,x)\|_\infty \}\right)^3,
\end{equation*}
where again $\widetilde{C}>0$ only depends on $\gamma,\|a\|_\infty,\|b\|_\infty$.
\end{proof}

\begin{remark}\label{rmk:DNN_phi_H_R}
The DNN satisfying~\eqref{eq:DNN_phi_H_R}
also satisfies the approximation estimate and the size estimate in 
Proposition~\ref{prop:DNN_approx_H} with $p\in\bbR^d$
instead.
\end{remark}

\begin{corollary}\label{cor:bound_phi_H}
Let the assumptions of Proposition~\ref{prop:DNN_Lipschitz}
and Assumption~\ref{ass:lineardynamics}
be satisfied. 
Then, 
there exist constants
$C,\kappa>0$
such that for every $d,\bar{d}\in\bbN$
\begin{equation*}
    \sup_{\delta\in (0,1)}\sup_{t\in [0,t_f] ,x,p\in \bbR^d}
    |\phi_{H,\delta,R}|
    \leq 
    C d^\kappa 
    \bar{d}^\kappa
\end{equation*}
\end{corollary}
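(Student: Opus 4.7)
The plan is to bound $\phi_{H,\delta,R}(t,x,p)$ directly from the explicit expression~\eqref{eq:DNN_phi_H_R} by estimating each constituent piece using the uniform bounds already assumed or derived. The key observation is that every potentially unbounded quantity appearing in~\eqref{eq:DNN_phi_H_R} is in fact controlled polynomially in $d,\bar d$: the truncation parameter satisfies $R\leq Ce^{\bar\kappa t_f}d^{\kappa_1}\bar d^{\kappa_2}$ by~\eqref{eq:bound_R}, so $\|\chi_R(p)\|_\infty\leq R$ regardless of $p\in\bbR^d$; the vector $\tilde u_R$ is clipped coordinate-wise to $[a_i,b_i]$, hence $\|\tilde u_R\|_\infty\leq\max\{\|a\|_\infty,\|b\|_\infty\}$, a constant under Assumption~\ref{ass:lineardynamics}; and the hypotheses of Proposition~\ref{prop:DNN_approx_H}, which are inherited, provide a uniform bound $\|\phi_{f_1,\delta_1}(t,x)\|_\infty+\|\phi_{f_2,\delta_2}(t,x)\|_\infty+\|\phi_{\bar L,\delta_{\bar L}}(t,x)\|_\infty\leq C d^{\kappa}\bar d^{\kappa}$ that is independent of $\delta$, $t$, and $x$.

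First I would estimate the innermost product $\tilde{\times}_{\delta_{\rm prod}}(\phi_{f_2,\delta_2}(t,x),\tilde u_R(t,x,p))\in\bbR^d$. By Lemma~\ref{lem:ReLU_Matr_vect_prod} applied entrywise, each of its $d$ coordinates is a sum of $\bar d$ approximated scalar products whose factors lie in a box of radius $Cd^{\kappa}\bar d^{\kappa}$, so each coordinate is bounded by $\bar d\,(Cd^{\kappa}\bar d^{\kappa}\max\{\|a\|_\infty,\|b\|_\infty\}+\delta_{\rm prod})$, which remains polynomial in $d,\bar d$. Adding $\phi_{f_1,\delta_1}(t,x)$ preserves this polynomial growth. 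The outer $\tilde{\times}_{\delta_{\rm prod}}(\chi_R(p)^\top,\cdot)$ is then a scalar-product-type approximation: a sum of $d$ approximated products whose factors are bounded by $R$ on one side and by the polynomial bound just obtained on the other, hence bounded by $d\bigl(R\cdot C'd^{\kappa}\bar d^{\kappa+1}+\delta_{\rm prod}\bigr)$, still polynomial in $d,\bar d$.

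For the remaining two summands in~\eqref{eq:DNN_phi_H_R}, the term $\phi_{\bar L,\delta_{\bar L}}(t,x)$ is already bounded by $Cd^{\kappa}\bar d^{\kappa}$ by hypothesis, and the quadratic-penalty sum $\gamma\sum_{i=1}^{\bar d}\max\{|a_i|^2,|b_i|^2\}\,\phi_{{\rm sq},\delta_{\rm sq}}\bigl((\tilde u_R)_i/\max\{|a_i|,|b_i|\}\bigr)$ has each argument in $[-1,1]$, so Lemma~\ref{lem:DNN_sq} (combined with the standard extension to $[-1,1]$ used in Lemma~\ref{lem:ReLU_Prod_Lip_bound}) gives $|\phi_{{\rm sq},\delta_{\rm sq}}|\leq 1+\delta_{\rm sq}$ there, and the whole sum is bounded by $\gamma\bar d\max\{\|a\|_\infty^2,\|b\|_\infty^2\}(1+\delta_{\rm sq})$. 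Using that $\delta_{\rm prod},\delta_{\rm sq}\in(0,1)$ contribute only additive constants, summing all three contributions yields the desired bound $Cd^{\kappa}\bar d^{\kappa}$ with $\kappa$ depending only on the fixed exponents in Assumption~\ref{ass:lineardynamics} and on the polynomial exponent of the input DNN hypotheses, and independent of $\delta\in(0,1)$.

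I expect the argument to be essentially bookkeeping; there is no new analytical ingredient beyond the already-invoked Lemmas~\ref{lem:DNN_sq}, \ref{lem:ReLU_Prod_Lip_bound}, \ref{lem:ReLU_Matr_vect_prod}, the uniform polynomial bound on the input DNNs, and the polynomial bound~\eqref{eq:bound_R} on $R$. The one point that needs to be handled with care is to track how many factors of $R$, $d$, and $\bar d$ accumulate at each level of composition, so that a single exponent $\kappa$ that is independent of $\delta$ can be extracted at the end; since there are only finitely many composition levels in~\eqref{eq:DNN_phi_H_R}, this causes no difficulty.
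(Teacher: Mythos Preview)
Your proposal is correct and follows essentially the same approach as the paper, which proves the corollary in one line by citing Lemma~\ref{lem:ReLU_Matr_vect_prod} and the bound~\eqref{eq:bound_R} on $R$. Your argument simply unpacks what that invocation means: bound each factor entering the nested $\tilde{\times}_{\delta_{\rm prod}}$ and $\phi_{{\rm sq},\delta_{\rm sq}}$ in~\eqref{eq:DNN_phi_H_R} using $\|\chi_R(p)\|_\infty\leq R$, $\|\tilde u_R\|_\infty\leq\max\{\|a\|_\infty,\|b\|_\infty\}$, and the assumed polynomial bound on the input DNNs, then let the finitely many composition levels accumulate finitely many polynomial factors in $d,\bar d$.
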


\begin{proof}
The assertion follows by Lemma~\ref{lem:ReLU_Matr_vect_prod} and~\eqref{eq:bound_R}.
\end{proof}

\section{DNN approximation of the value function}
\label{sec:DNN_approx_Value_fct}

In this section we will establish the approximation of the value function and its gradient 
by DNNs. 
There exist DNNs $\phi_{H,\delta_H,R}$, $\delta_H\in(0,1)$,
that approximate the Hamiltonian $H_R$ 
and are globally Lipschitz continuous and bounded according to Corollary~\ref{cor:bound_phi_H},
see Propositions~\ref{prop:DNN_approx_H}
and~\ref{prop:DNN_Lipschitz}.
Recall that $R$ is specified in~\eqref{eq:def_R} and an upper bound is given in~\eqref{eq:bound_R}.
We suppose that there exist globally Lipschitz ReCU DNN approximations $\phi_{\Psi,\delta_\Psi}$, $\delta_\Psi\in (0,1)$, 
to the terminal condition
$\Psi$. 
Specifically, there exists a constant $C_{\phi_{\Psi},x}>0$
such that for every $\delta_\Psi\in(0,1)$ and every $x,x'\in \bbR^d$
\begin{equation}\label{eq:Lipschitz_DNN_Psi}
    |\phi_{\Psi, \delta_\Psi}(x) - \phi_{\Psi, \delta_\Psi}(x')|
    \leq 
    C_{\phi_{\Psi},x}
    \|x-x'\|_1
    .
\end{equation}
The property that every ReCU DNN is twice continuously 
differentiable will be useful in the ensuing proof.
We shall denote the solution to the HJB equation~\eqref{eq:HJB_PDE} with respect to 
the DNN approximation $\phi_{H,\delta_H,R}$ of the Hamiltonian and with respect to the 
terminal condition $\phi_{\Psi,\delta_\Psi}$
by
$\widetilde{V}$, 
i.e., 
\begin{equation}\label{eq:HJB_PDE_DNN_H}
\partial_t \widetilde{V}(t,x)
 + \frac{1}{2}\Delta \widetilde{V}(t,x) + \phi_{H,\delta_H,R}(t,x,\nabla_x \widetilde{V}(t,x))=0
  \text{ on } [0,t_f)\times \bbR^d, 
  \quad \widetilde{V}(t_f,x) = \phi_{\Psi,\delta_\Psi}(x) \text{ on } \bbR^d
  .
\end{equation}
Proposition~\ref{prop:appendix_classical_sol} implies that 
$\widetilde{V} \in C^{1,2}([0,t_f]\times \bbR^d)$.

\begin{proposition}\label{prop:MLP_approx}
Let Assumption~\ref{ass:lineardynamics} and
 let the assumptions of 
 Proposition~\ref{prop:DNN_Lipschitz}
 be satisfied.
 Suppose there exist $C,\kappa_1,\kappa_2>0$ such that for every $d,\bar{d}\in\bbN$, 
 \begin{equation*}
 \sup_{\delta_\Psi\in(0,1)}\sup_{x\in\bbR^d}|\phi_{\Psi,\delta_\Psi}(x)|+ C_{\phi_{\Psi},x} + C_{1,x}+ C_{2,x}+ C_{\bar{L},x} \leq C d^{\kappa_1} \bar{d}^{\kappa_2}.
 \end{equation*}
 Let $\alpha\in(0,1)$.
 For every $N,M\in\bbN$, 
 define the index set $\Theta^N_N$ by $\Theta^N_0 = \{0\}$ and 
 \begin{equation}\label{eq:def_set_Theta}
  \Theta^N_k 
  =
  \Theta^N_{k-1} 
  \cup
  \left[
  \bigcup_{\stackrel{l=-k+1,\ldots,k-1}{i=-M^k,\ldots,M^k}}
  \{(\theta,l,i) : \theta \in \Theta^N_{k-1}\}\right]
  \quad 
  k=1,\ldots,N.
 \end{equation}
 Let $Z^\theta$, $\theta\in\Theta^N_N$, be i.i.d.\ $d$-dimensional, standard normally distributed 
 random variables, let $\tau^\theta$, $\theta\in\Theta^N_N$, i.i.d. $(0,1)$-valued random variables
 with distribution function $(0,1)\ni b\mapsto b^\alpha$.
 Define for every $n=1,\ldots,N$, $\theta\in \Theta^N_{N-n}$
 \begin{equation}\label{eq:def_MLP_alg}
 \begin{aligned}
  &\begin{pmatrix} \widetilde{v}^\theta_{n,M}(t,x) \\ \widetilde{v}^\theta_{n,M,\nabla}(t,x) \end{pmatrix}
  \\
  &\quad:=\begin{pmatrix}\phi_{\Psi,\delta_\Psi}(x) \\ 0 \end{pmatrix}
  + 
  \frac{1}{M^n}\sum_{i=1}^{M^n}
  \left(\phi_{\Psi,\delta_\Psi}\left(x + \sqrt{t_f-t} Z^{(\theta,0,-i)} \right)     - \phi_{\Psi,\delta_\Psi}(x)\right) 
  \begin{pmatrix} 1 \\ (t_f-t)^{-1/2} Z^{(\theta,0,-i))} \end{pmatrix} 
  \\
  &\qquad+
  \sum_{l=0}^{n-1} \sum_{i=1}^{M^{n-l}}
  \frac{(t_f-t)(\tau^{(\theta,l,i)})^{1-\alpha} }{\alpha M^{n-l}}
  \begin{pmatrix} 1 \\ [(t_f - t )\tau^{(\theta,l,i)}]^{-1/2} Z^{(\theta,l,i)} \end{pmatrix}
  \\
  &\qquad \times \left[\phi_{H,\delta_H,R}\left(t', x', v^{(\theta,l,i)}_{l,M,\nabla}(t',x')   \right)
  - \mathbbm{1}_\bbN(l) \phi_{H,\delta_H,R}\left(t', x', v^{(\theta,-l,i)}_{l-1,M,\nabla}(t',x')   \right)\right]
  \Bigg\vert_{(t',x') = (t^{(\theta,l,i)},x^{(\theta,l,i)}_t)}
  ,
  \end{aligned}
 \end{equation}
where for every $\theta \in \Theta^N_{N}$ and every $(t,x)\in [0,t_f]\times \bbR^d$
\begin{equation*}
(\widetilde{v}^\theta_{0,M}(t,x), \widetilde{v}^\theta_{0,M,\nabla}(t,x))^\top 
  = (\widetilde{v}^\theta_{-1,M}(t,x), \widetilde{v}^\theta_{-1,M,\nabla}(t,x))^\top=0
\end{equation*}  
  and for every $\theta \in \Theta^N_{N-n}$, $n=1,\ldots,N$, and every $(t,x)\in [0,t_f]\times \bbR^d$
  \begin{equation*}
      x^\theta_t =
      x + \sqrt{(t_f-t)\tau^\theta}Z^\theta
      \quad\text{and}\quad 
      t^\theta = 
      t + (t_f - t )\tau^\theta .
  \end{equation*}
Let $Q\subset\bbR^d$ be a bounded domain.
For every $\varepsilon\in (0,1)$ there exists $N_\varepsilon\in\bbN$ such that for every $N\geq N_\varepsilon$ 
\begin{equation*}
\sup_{\delta_H,\delta_{\Psi}\in (0,1)}
 \sqrt{\bbE\left( \left\|\widetilde{V}(0,\cdot) - \widetilde{v}^0_{N,\lfloor N^\alpha\rfloor}(0,\cdot)   \right\|^2_{L^2(Q)}\right) }
 +
 \sqrt{\bbE\left( \left\|\|\nabla_x \widetilde{V}(0,\cdot) - \widetilde{v}^0_{N,\lfloor N^\alpha\rfloor,\nabla}(0,\cdot) \|_2   \right\|^2_{L^2(Q)}\right) }
 \leq 
 \varepsilon
\end{equation*}
and for every $\delta>0$,
$\sup_{\delta_H,\delta_{\Psi}\in (0,1)}\# (\Theta^{N_\varepsilon}_{N_\varepsilon}) = \calO(|Q|^{1+\delta/2}d^\kappa\bar{d}^\kappa \varepsilon^{-2+\delta}) $
for some constant $\kappa$ that neither depends on $d$ nor on $\bar{d}$.
 
\end{proposition}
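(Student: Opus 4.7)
The plan is to identify the scheme \eqref{eq:def_MLP_alg} as the multilevel Picard (MLP) iteration of \cite{HJK_2019,HJKN_2020} applied to the semi-linear Kolmogorov equation \eqref{eq:HJB_PDE_DNN_H}, and to invoke the corresponding pointwise $L^2(\bbP)$ error estimate with the observation that, by the hypotheses of Proposition~\ref{prop:DNN_Lipschitz}, Corollary~\ref{cor:bound_phi_H} and the standing assumption on $\phi_{\Psi,\delta_\Psi}$, the driver $\phi_{H,\delta_H,R}$ and the terminal $\phi_{\Psi,\delta_\Psi}$ have uniform sup-norms and Lipschitz constants that are at most polynomial in $d,\bar{d}$, uniformly in $\delta_H,\delta_\Psi\in(0,1)$. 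This polynomial dependence is precisely what will let us upgrade the MLP complexity to polynomial-in-$(d,\bar d)$ behaviour.

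The first step is to record the stochastic fixed-point representation for the perturbed value function. Since $\widetilde{V}\in C^{1,2}([0,t_f]\times\bbR^d)$ by Proposition~\ref{prop:appendix_classical_sol} and its spatial gradient is bounded (again by the Lipschitz control of $\phi_{H,\delta_H,R}$ and $\phi_{\Psi,\delta_\Psi}$), \cite[Lemma~4.2]{HJK_2019} gives the analogue of \eqref{eq:stoch_repr_HJB} with $H_R,\Psi$ replaced by $\phi_{H,\delta_H,R},\phi_{\Psi,\delta_\Psi}$. The recursion \eqref{eq:def_MLP_alg} is then the full-history Picard iteration of this representation, where the independent time substitution $\tau^\theta$ with density $\alpha b^{\alpha-1}$ on $(0,1)$ absorbs the singular Malliavin weight $(t-s)^{-1/2}$ appearing in the gradient component. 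The second step is to apply the error decomposition of \cite[Proposition~3.6/Theorem~3.8]{HJKN_2020}, adapted for the joint $(\widetilde V,\nabla_x\widetilde V)$-component as in \cite[Section~4]{HJK_2019}. Writing $\calE_n^2 := \sup_{x\in\bbR^d}(1+\|x\|_2^q)^{-2}\bbE(|\widetilde V(0,x)-\widetilde v^0_{n,M}(0,x)|^2+\|\nabla_x\widetilde V(0,x)-\widetilde v^0_{n,M,\nabla}(0,x)\|_2^2)$, the contraction/telescoping argument yields a recursion of the form
\begin{equation*}
\calE_n \;\le\; \frac{C_{\rm MLP}}{M^{n/2}}\sum_{l=0}^{n-1}(1+C_{\rm Lip})^{n-l}\,\calE_l \;+\; \frac{C_{\rm MLP}}{M^{n/2}},
\end{equation*}
with $C_{\rm MLP}, C_{\rm Lip}=\calO(d^{\kappa'}\bar d^{\kappa'})$ independent of $\delta_H,\delta_\Psi$. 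A standard Gronwall-type induction (as in \cite[Corollary~3.11]{HJKN_2020}) then gives $\calE_N \le C_0 d^{\kappa'}\bar d^{\kappa'}\bigl[\tfrac{(1+C_{\rm Lip})}{M^{1/2}}\bigr]^N$ up to combinatorial factors.

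The third step is to translate this pointwise bound into the $L^2(Q)$-bound. Fubini's theorem yields
\begin{equation*}
\bbE\bigl(\|\widetilde V(0,\cdot)-\widetilde v^0_{N,M}(0,\cdot)\|^2_{L^2(Q)}\bigr)+\bbE\bigl(\|\|\nabla_x\widetilde V(0,\cdot)-\widetilde v^0_{N,M,\nabla}(0,\cdot)\|_2\|^2_{L^2(Q)}\bigr)\le |Q|\,(1+\sup_{x\in Q}\|x\|_2^q)^2\,\calE_N^2 .
\end{equation*}
Plugging in $M=\lfloor N^\alpha\rfloor$ and choosing $N_\varepsilon$ as the smallest integer such that the right-hand side is at most $\varepsilon^2$ gives $N_\varepsilon$ logarithmic in $\varepsilon^{-1}, d,\bar d, |Q|, \sup_{x\in Q}\|x\|_2^q$. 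The final step counts $\#(\Theta^N_N)$: from \eqref{eq:def_set_Theta} one has $\#(\Theta^N_k)=(1+(2k-1)(2M^k+1))\#(\Theta^N_{k-1})$, and plugging in the choice of $N_\varepsilon$ together with $M=\lfloor N^\alpha\rfloor$ and optimizing $\alpha\in(0,1)$ close to $1$ (as in the complexity analysis of \cite[Theorem~3.8]{HJKN_2020}) produces the stated bound $\#(\Theta^{N_\varepsilon}_{N_\varepsilon})=\calO(|Q|^{1+\delta/2}d^{\kappa}\bar d^{\kappa}\varepsilon^{-2+\delta})$ for any prescribed $\delta>0$.

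The main obstacle is in step two: one must propagate the Lipschitz constant of $\phi_{H,\delta_H,R}$ through the $N$-fold Picard recursion without the factor $(1+C_{\rm Lip})^N$ overwhelming the Monte Carlo decay $M^{-N/2}$. This is exactly where the $\tau^\theta$-substitution with exponent $\alpha<1$ pays off: the variance of the weight $[(t_f-t)\tau^\theta]^{-1/2}$ becomes integrable, so that the variance contribution at each level is of order $(t_f-t)(\tau^\theta)^{1-\alpha}$ with bounded $L^2$-moments, and the resulting recursion is contractive for $M$ large enough. A secondary obstacle is ensuring all constants remain polynomial in $d,\bar d$ uniformly in $\delta_H,\delta_\Psi$, which is already guaranteed by Corollary~\ref{cor:bound_phi_H} and the standing hypothesis \eqref{eq:Lipschitz_DNN_Psi} together with the a priori bounds on $C_{1,x},C_{2,x},C_{\bar L,x}$.
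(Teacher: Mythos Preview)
Your proposal is correct and follows essentially the same route as the paper: identify \eqref{eq:def_MLP_alg} as the MLP scheme of \cite{HJK_2019} for the perturbed PDE \eqref{eq:HJB_PDE_DNN_H}, invoke the pointwise $L^2(\bbP)$ error/complexity estimates there, integrate over $Q$ via Fubini, and control all constants polynomially in $d,\bar d$ through Proposition~\ref{prop:DNN_Lipschitz} and Corollary~\ref{cor:bound_phi_H}. The paper's proof is more streamlined in that it cites \cite[Equations~(132) and~(133)]{HJK_2019} directly rather than re-deriving the recursion and Gronwall step.

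Two minor points to tighten. First, both $\phi_{H,\delta_H,R}$ and $\phi_{\Psi,\delta_\Psi}$ are uniformly \emph{bounded} here (Corollary~\ref{cor:bound_phi_H} and the hypothesis of the proposition), so there is no need for the weighted error $\calE_n$ with the $(1+\|x\|_2^q)^{-2}$ prefactor; introducing it yields a spurious factor $(1+\sup_{x\in Q}\|x\|_2^q)^2$ that does not appear in the stated complexity bound. The paper simply picks up a factor $\sqrt{|Q|}$ from integrating the (unweighted) pointwise bound and then rescales $\varepsilon' = \varepsilon/\sqrt{|Q|}$. Second, $\alpha\in(0,1)$ is a fixed hypothesis in the statement, not something to optimize; the parameter $\delta>0$ in the complexity comes out of the MLP complexity analysis in \cite{HJK_2019} for any fixed $\alpha$, not from tuning $\alpha$ close to $1$.
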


\begin{proof}
The error estimate follows for every $x\in Q$ pointwise 
by~\cite[Equation~(132)]{HJK_2019}. 
Then, for every $\varepsilon'\in (0,1)$
there exist $N_{\varepsilon'}(x)$, $x\in Q$,
such that for every $N\geq \sup_{x\in Q}N_{\varepsilon'}(x) =: \bar{N}_{\varepsilon'}$,
\begin{equation*}
 \sqrt{\bbE\left( \left\|\widetilde{V}(0,\cdot) - \widetilde{v}^0_N(0,\cdot)   \right\|^2_{L^2(Q)}\right) }
 +
 \sqrt{\bbE\left( \left\|\|\nabla_x \widetilde{V}(0,\cdot) - \widetilde{v}^0_{N,\nabla}(0,\cdot) \|_2  \right\|^2_{L^2(Q)}\right) }
 \leq 
 \varepsilon \sqrt{|Q|}
\end{equation*}
By~\cite[Equation~(133)]{HJK_2019},
\begin{equation*}
\begin{aligned}
  \# (\Theta^{\bar{N}_{\varepsilon'}}_{\bar{N}_{\varepsilon'}})
  &\leq 
  C d 
  (\varepsilon')^{-(2+\delta)}
  \left [
  1+ \sqrt{t_f} C_{\phi_\Psi,x}d + \sup_{x\in \bbR^d}|\phi_\Psi(x)|
  + C_{\phi_\Psi,x} + t_f C_{H,x} 
  \right.
  \\
  &\quad  \left.
  + t_f \sup_{t\in [0,t_f],x\in\bbR^d}|\phi_{H,\delta_H,R}(t,x,0)| 
  + t_f d C_{\phi_H,p}(C_{\phi_\Psi,x} + t_f C_{\phi_H,x})
  \right]^{2+\delta}
  ,
  \end{aligned}
\end{equation*}
where $C>0$ is a constant that only depends on $\alpha$
and $C_{\phi_H,p}$, $C_{\phi_H,x}$, and 
$C_{\phi_\Psi,x}$
are the corresponding Lipschitz constants 
of $\phi_{H,\delta_H,R}$
and $\phi_{\Psi,\delta_\Psi}$
with respect to the $1$-norm.
These Lipschitz constants are independent from 
$\delta_H,\delta_\Psi\in (0,1)$.
By Proposition~\ref{prop:DNN_Lipschitz} and~\eqref{eq:bound_R},
there exist $C',\kappa' >0$
such that for every $d,\bar{d}\in \bbN$,
\begin{equation*}
    C_{\phi_H,x} + C_{\phi_H,p}
    \leq 
    C'
    d^{\kappa'}
    \bar{d}^{\kappa'}
    .
\end{equation*}
Lemma~\ref{cor:bound_phi_H} and the assumptions of this proposition
imply that
there exist $C',\kappa' >0$
such that for every $d,\bar{d}\in \bbN$,
\begin{equation*}
    \sup_{\delta_H\in(0,1)}
    \sup_{t\in [0,t_f], x',p' \in \bbR^d}
    |\phi_{H,\delta_H,R}(t,x',p')|
    +|\phi_{\Psi,\delta_\Psi}(x')|
    \leq
    C'
    d^{\kappa'}
    \bar{d}^{\kappa'}
    .
\end{equation*}
The assertion follows upon the choice $\varepsilon' = \varepsilon/ \sqrt{|Q|}$.
\end{proof}

\begin{theorem}\label{thm:DNN_approx_value_fct}
Let the assumptions of Proposition~\ref{prop:MLP_approx}
be satisfied.
Suppose that for every $\delta_\Psi \in (0,1)$ the
ReCU DNNs $\phi_{\Psi,\delta_\Psi}$
satisfies
that for every $x\in\bbR^d$
\begin{equation*}
    |\Psi(x) - \phi_{\Psi,\delta_\Psi}(x)|
    \leq \delta_\Psi(1 + \|x\|_2^q)
\end{equation*}
and ${\rm size}(\phi_{\Psi,\delta_\Psi}) = \calO((d\bar{d})^{\bar{\kappa}} \delta_\Psi^{-\bar{\kappa}})$ for some fixed $\bar{\kappa},q\geq 1$.
Then, for every $\varepsilon>0$ there exist DNNs $\phi_\varepsilon$ and $\phi_{\varepsilon,\nabla}$
such that
\begin{equation*}
 \| V(0,\cdot) - \phi_\varepsilon\|_{L^2(Q)} 
 +
 \left\|\|\nabla_x V(0,\cdot) - \phi_{\varepsilon,\nabla}\|_2\right\|_{L^2(Q)}
 \leq \varepsilon
\end{equation*}
and ${\rm size}(\phi_\varepsilon) = \calO(\sup_{x\in Q}\{\|x\|_2^\kappa\} |Q|^\kappa d^\kappa \bar{d}^\kappa \varepsilon^{-\kappa})$ and 
${\rm size}(\phi_{\varepsilon,\nabla}) = \calO(\sup_{x\in Q}\{\|x\|_2^\kappa\} |Q|^\kappa d^\kappa \bar{d}^\kappa \varepsilon^{-\kappa})$ 
for some $\kappa>0$ that does not depend on $d,\bar{d}$.
\end{theorem}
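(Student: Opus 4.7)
The plan is to combine the perturbation estimate of Lemma~\ref{lem:perturbation_value_function} with the MLP approximation of Proposition~\ref{prop:MLP_approx} via the triangle inequality
\begin{equation*}
\| V(0,\cdot) - \widetilde{v}^0_{N,M}(0,\cdot)\|_{L^2(Q)}
\leq \| V(0,\cdot) - \widetilde{V}(0,\cdot)\|_{L^2(Q)} + \|\widetilde{V}(0,\cdot) - \widetilde{v}^0_{N,M}(0,\cdot)\|_{L^2(Q)},
\end{equation*}
and the analogous split for the gradient. The perturbation term is controlled by Lemma~\ref{lem:perturbation_value_function} applied with $H_R^* = \phi_{H,\delta_H,R}$ and $\Psi^* = \phi_{\Psi,\delta_\Psi}$, using that the hypotheses on Lipschitz constants, on the pointwise approximation errors (with the bound for $\phi_{H,\delta_H,R}$ supplied by Proposition~\ref{prop:DNN_approx_H} and Remark~\ref{rmk:DNN_phi_H_R}), and on the $L^\infty$ bound (Corollary~\ref{cor:bound_phi_H}) are all satisfied; this yields a bound of the form $C d^{\kappa}\bar{d}^{\kappa}(1+\sup_{x\in Q}\|x\|_2^q)\max(\delta_H,\delta_\Psi)^{1-\delta}$ on the perturbation term after integrating over $Q$.

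The second term is then handled by Proposition~\ref{prop:MLP_approx}: for any target accuracy $\varepsilon'>0$ there is $N_{\varepsilon'}$ with the expected $L^2$ error below $\varepsilon'$, and $\#(\Theta^{N_{\varepsilon'}}_{N_{\varepsilon'}}) = \calO(|Q|^{1+\delta/2}d^\kappa\bar{d}^\kappa(\varepsilon')^{-2-\delta})$ uniformly in $\delta_H,\delta_\Psi\in(0,1)$. To extract a single deterministic DNN from the random $\widetilde{v}^0_{N,M}$, I would invoke a standard probabilistic argument: since the expected squared $L^2$ error is small, at least one realization of the family of i.i.d.\ random variables $(Z^\theta,\tau^\theta)_{\theta\in\Theta}$ produces an error no larger than the expectation, and conditional on that realization the mappings $x\mapsto \widetilde{v}^0_{N,M}(0,x)$ and $x\mapsto \widetilde{v}^0_{N,M,\nabla}(0,x)$ are deterministic compositions and weighted sums of the DNNs $\phi_{\Psi,\delta_\Psi}$ and $\phi_{H,\delta_H,R}$ and hence realized by a DNN in the sense of~\eqref{eq:def_relu_nn}.

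Choosing $\max(\delta_H,\delta_\Psi)$ as a small polynomial in $\varepsilon/(d^\kappa\bar{d}^\kappa(1+\sup_{x\in Q}\|x\|_2^q))$ makes the perturbation contribution at most $\varepsilon/2$, and setting $\varepsilon'=\varepsilon/2$ in Proposition~\ref{prop:MLP_approx} makes the MLP contribution at most $\varepsilon/2$. The size count then follows by bounding each of the $\#\Theta^{N_{\varepsilon'}}_{N_{\varepsilon'}}$ summands in~\eqref{eq:def_MLP_alg} by a DNN of size $\calO((d\bar{d})^{\bar{\kappa}}\max(\delta_H,\delta_\Psi)^{-\bar{\kappa}})$ via Proposition~\ref{prop:DNN_approx_H} and the assumed size bound on $\phi_{\Psi,\delta_\Psi}$, together with the parallelization and composition rules for DNNs recalled in Section~1.1. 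Multiplying the two polynomial bounds produces the claimed polynomial dependence of $\size(\phi_\varepsilon)$ and $\size(\phi_{\varepsilon,\nabla})$ on $d,\bar{d},\varepsilon^{-1},|Q|,\sup_{x\in Q}\|x\|_2$.

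The main obstacle is the bookkeeping needed to turn the recursive definition~\eqref{eq:def_MLP_alg} into a single DNN with a controlled total number of non-zero weights: the $n$-th level calls the $(n-1)$-st level inside $\phi_{H,\delta_H,R}$, and each of the $M^{n-l}$ inner samples needs its own independent copy, so the recursion must be unrolled and the sizes accumulated carefully through a finite induction on $n\le N_{\varepsilon'}$. One also has to verify that the building blocks remain compatible (ReLU/ReCU parallelizations are admissible, as discussed in Section~1.1) and that the Lipschitz and boundedness hypotheses feeding into Proposition~\ref{prop:MLP_approx} remain uniform in $\delta_H,\delta_\Psi\in(0,1)$; both follow from Proposition~\ref{prop:DNN_Lipschitz} and Corollary~\ref{cor:bound_phi_H} together with the hypotheses of the theorem. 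Corollary~\ref{cor:DNN_approx_control} then deduces the statement for the optimal policy $\alpha$ from the explicit formula for $\bar u$ in Lemma~\ref{lem:formula_H} applied to $\phi_{\varepsilon,\nabla}$.
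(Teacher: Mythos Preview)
Your proposal is correct and follows essentially the same route as the paper: split via the triangle inequality into a perturbation term (handled by Lemma~\ref{lem:perturbation_value_function} after verifying its hypotheses via Propositions~\ref{prop:DNN_approx_H}, \ref{prop:DNN_Lipschitz}, Remark~\ref{rmk:DNN_phi_H_R}, and Corollary~\ref{cor:bound_phi_H}) and an MLP term (handled by Proposition~\ref{prop:MLP_approx}), then extract a deterministic realization by the standard ``expectation $\leq c$ implies some $\omega$ achieves $\leq c$'' argument, and finally count sizes by multiplying the number of samples against the per-call DNN sizes. The paper is in fact terser than you on the recursive size bookkeeping you flag as the main obstacle, and your remark about Corollary~\ref{cor:DNN_approx_control} is extraneous to this theorem (it is a separate statement), but otherwise the arguments coincide.
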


\begin{proof}
 This proof will be carried out in two steps. 
 First, we study the perturbation by the approximation of the Hamiltonian and the terminal condition.
 Then, we verify the consistent approximation of the value function by  DNNs without incurring the curse of dimension.
 
 The error between $V$ and $\widetilde{V}$
 and between the gradients can be controlled by Lemma~\ref{lem:perturbation_value_function}.
 In order to be able to apply Lemma~\ref{lem:perturbation_value_function},
 we need to verify the assumptions in \eqref{eq:approx_H_Psi}, \eqref{eq:Lipschitz_H_Hstar},
 \eqref{eq:Lipschitz_H_R*}, and \eqref{eq:Lipschitz_Psi_R*}.
 The assumptions in \eqref{eq:approx_H_Psi} and~\eqref{eq:Lipschitz_H_Hstar} with $\Psi^* = \phi_{\Psi,\delta_\Psi}$
 and $H_{R}^*= \phi_{H,\delta_H,R}$ are satisfied according to Proposition~\ref{prop:DNN_approx_H}, Remark~\ref{rmk:DNN_phi_H_R}, and Corollary~\ref{cor:bound_phi_H}.
 The assumption in~\eqref{eq:Lipschitz_H_R*} is implied by Proposition~\ref{prop:DNN_Lipschitz}
 and the assumption in~\eqref{eq:Lipschitz_Psi_R*} 
 is assumed here in~\eqref{eq:Lipschitz_DNN_Psi}.
 Furthermore, as a consequence of Assumption~\ref{ass:lineardynamics}(i),(iv) and~\eqref{eq:bound_R}, 
 there exist $C,\kappa>0$ such that
 for every $d,\bar{d}\in\bbN$,
 \begin{equation*}
 \sup_{t\in [0,t_f]}\sup_{x',p'\in\bbR^d}|H_R(t,x',p')|
 \leq 
 C
 d^\kappa \bar{d}^\kappa.
 \end{equation*}
 Thus, there exists $\kappa>0$ such that for every $\delta\in (0,1)$ 
 \begin{equation*}
 \| V(0,\cdot) - \widetilde{V}(0,\cdot)\|_{L^2(Q)} 
 +
 \left\|\|\nabla_x V(0,\cdot) - \nabla_x\widetilde{V}(0,\cdot)\|_2\right\|_{L^2(Q)}
 \leq \delta
 \end{equation*}
 with $\max\{{\rm size}(\phi_{H,\delta_H,R}),{\rm size}(\phi_{\Psi,\delta_\Psi})  \} = \calO(|Q|^\kappa\sup_{x\in Q}\{\|x\|_2^\kappa\}d^{\kappa} \bar{d}^{\kappa}  \delta^{-\kappa})$.
 Note that for every $x\in\bbR^d$, $\|x\|_2 \leq \sqrt{d}\|x\|_\infty$.

 Proposition~\ref{prop:MLP_approx} states an error estimate in a root mean squared sense.
 Recall the fact that for any positive random variable $X$ that satisfies $\bbE(X)\leq c$ for some $c>0$
 there exists a measurable set $\Omega'\subset\Omega$ such that 
 $X(\omega)\leq c$ for every $\omega\in \Omega'$ such that
 $\bbP(\Omega')>0$.
 Let $\delta'\in (0,1)$ be arbitrary and to be determined below.
 As a result there exists $\Omega'$ with $\bbP(\Omega')>0$ and
 \begin{equation*}
  \|\widetilde{V}(0,\cdot) - \widetilde{v}^0_{N,\lfloor N^\alpha\rfloor}(0,\cdot)(\omega)   \|_{L^2(Q)} 
 +
 \left\|\|\nabla_x \widetilde{V}(0,\cdot) - \widetilde{v}^0_{N,\lfloor N^\alpha\rfloor,\nabla}(0,\cdot) (\omega) \|_2  \right\|_{L^2(Q)} 
 \leq 
 \delta'
\end{equation*}
for every $\omega\in \Omega'$.
Since the definition of the mappings $\widetilde{v}^0_{N,\lfloor N^\alpha\rfloor}$ 
and $\widetilde{v}^0_{N,\lfloor N^\alpha\rfloor,\nabla} $
in Proposition~\ref{prop:MLP_approx} requires only composition and addition of realizations 
of DNNs, there exist DNNs
$\phi_\varepsilon$ and $\phi_{\nabla,\varepsilon}$ 
such that for every $x\in Q$ and fixed $\omega\in \Omega'$
\begin{equation*}
    \phi_\varepsilon(x) = \widetilde{v}^0_{N,\lfloor N^\alpha\rfloor}(0,x)(\omega)
    \quad \text{and} \quad 
    \phi_{\nabla,\varepsilon}(x) = \widetilde{v}^0_{N,\lfloor N^\alpha\rfloor,\nabla}(0,x) (\omega)
    .
\end{equation*}
Thus, the assertion of the error estimate follows upon 
choosing $\delta\simeq \varepsilon\simeq \delta'$.
The assertion on the size of the DNNs
$\phi_\varepsilon$ and $\phi_{\nabla,\varepsilon}$ follows by the estimate on the size of 
$\phi_{H,\delta,R}$ in Proposition~\ref{prop:DNN_approx_H} and Remark~\ref{rmk:DNN_phi_H_R}, by the assumption on the size of $\phi_{\Psi,\delta}$, 
and by Proposition~\ref{prop:MLP_approx}, which contains an upper bound on the number of scalar-valued random variables
that are used to sample $\widetilde{v}^0_{N,\lfloor N^\alpha\rfloor}(0,x)$ 
and $\widetilde{v}^0_{N,\lfloor N^\alpha\rfloor,\nabla}(0,x)$.
\end{proof}

\begin{corollary}\label{cor:DNN_approx_control}
Let the assumptions of Theorem~\ref{thm:DNN_approx_value_fct}
be satisfied.
For every $\varepsilon\in (0,1)$, 
there exists a DNN $\phi_{\varepsilon,\alpha}$
such that
\begin{equation*}
     \left\|\|\alpha(0,\cdot) - \phi_{\varepsilon,\alpha}\|_2\right\|_{L^2(Q)}
 \leq \varepsilon
\end{equation*}
and ${\rm size}(\phi_{\varepsilon,\alpha}) = \calO(\sup_{x\in Q}\{\|x\|_2^\kappa\} |Q|^\kappa d^\kappa \bar{d}^\kappa \varepsilon^{-\kappa})$ 
for some $\kappa>0$ 
that does not depend on $d,\bar{d}$.
\end{corollary}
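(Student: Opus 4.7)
The plan is to exploit the explicit form of the minimizer from Lemma~\ref{lem:formula_H}: applied to $p=\nabla_x V(0,x)$, this gives
\begin{equation*}
  \alpha(0,x)_i
  =
  \min\bigl\{\max\{-(f_2(0,x)^\top \nabla_x V(0,x))_i/(2\gamma),a_i\},b_i\bigr\},
  \quad i=1,\ldots,\bar{d}.
\end{equation*}
Since Theorem~\ref{thm:DNN_approx_value_fct} already supplies a DNN $\phi_{\varepsilon,\nabla}$ approximating $\nabla_x V(0,\cdot)$ in $L^2(Q)$ without curse of dimension, and the hypotheses of Theorem~\ref{thm:main_resutl} provide $\phi_{f_2,\delta_2}$ approximating $f_2$, the natural candidate is obtained by substituting these approximations into the closed-form expression for $\alpha$ and replacing the exact product with the ReLU product DNN from Lemma~\ref{lem:ReLU_Matr_vect_prod}.

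Concretely, I would define
\begin{equation*}
 \phi_{\varepsilon,\alpha}(x)_i
 :=
 \min\bigl\{\max\bigl\{-\bigl(\tilde{\times}_{\delta_{\mathrm{prod}}}(\phi_{f_2,\delta_2}(0,x)^\top,\chi_R\circ \phi_{\varepsilon,\nabla}(x))\bigr)_i/(2\gamma),\,a_i\bigr\},b_i\bigr\},
\end{equation*}
where $\chi_R$ is realized exactly by a small ReLU DNN via Remark~\ref{rmk:clip_RelU_DNN} and $\min/\max$ against constants are representable by ReLU DNNs of constant size and Lipschitz constant $1$. Because \eqref{eq:def_R} guarantees $\|\nabla_x V(0,\cdot)\|_\infty\le R$ pointwise, inserting $\chi_R$ in front of $\phi_{\varepsilon,\nabla}$ can only reduce the $L^2(Q)$ error: the map $y\mapsto \chi_R(y)$ is a $1$-Lipschitz projection onto $[-R,R]^d$ that already contains the true gradient.

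For the error, I would apply the triangle inequality together with the $1$-Lipschitzness of $\min\{\max\{\cdot,a_i\},b_i\}$, the Lipschitz bound of Lemma~\ref{lem:ReLU_Matr_vect_prod} (on the ball of radius $\max\{R,\sup\|\phi_{f_2,\delta_2}\|_\infty\}$), and the boundedness of $\phi_{f_2,\delta_2}$ from Assumption~\ref{ass:lineardynamics}, to obtain pointwise
\begin{equation*}
 \|\alpha(0,x)-\phi_{\varepsilon,\alpha}(x)\|_2
 \le
 C\,d^{\kappa}\bar{d}^{\kappa}\bigl(
 \|\nabla_x V(0,x)-\chi_R\phi_{\varepsilon,\nabla}(x)\|_2
 +R\,\|f_2(0,x)-\phi_{f_2,\delta_2}(0,x)\|_2
 + \delta_{\mathrm{prod}}\,d
 \bigr).
\end{equation*}
Taking $L^2(Q)$ norms, the first summand is controlled by Theorem~\ref{thm:DNN_approx_value_fct}, while the remaining ones are controlled by the polynomial bound $(1+\|x\|_2^q)$ integrated over $Q$, whence choosing $\delta_2,\delta_{\mathrm{prod}}$ polynomially small in $\varepsilon,d,\bar{d},|Q|,\sup_{x\in Q}\|x\|_2$ yields the asserted error estimate. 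The size bound then follows by aggregating the sizes of $\phi_{\varepsilon,\nabla}$ (from Theorem~\ref{thm:DNN_approx_value_fct}), $\phi_{f_2,\delta_2}$ (from the hypothesis), $\tilde{\times}_{\delta_{\mathrm{prod}}}$ (Lemma~\ref{lem:ReLU_Matr_vect_prod}) and the constant-size ReLU blocks for $\chi_R$ and $\min/\max$, all of which are polynomial in $d,\bar{d},\varepsilon^{-1}$.

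The main obstacle is purely bookkeeping: one has to balance the tolerances $\delta_2$ and $\delta_{\mathrm{prod}}$ against the polynomially growing factors $R=\calO(d^{\kappa_1}\bar{d}^{\kappa_2})$, $\sup_{x\in Q}(1+\|x\|_2^q)$, and $|Q|^{1/2}$ so that no hidden dimension-exponential factor appears in the final size estimate. This is the same juggling of constants already performed in the proofs of Proposition~\ref{prop:DNN_approx_H} and Theorem~\ref{thm:DNN_approx_value_fct}, so no new analytical ingredient is required beyond Lemma~\ref{lem:formula_H} and the Lipschitz and boundedness estimates already established.
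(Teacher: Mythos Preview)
Your proposal is correct and follows essentially the same route as the paper's proof: both start from the explicit minimizer formula of Lemma~\ref{lem:formula_H}, substitute $\phi_{\varepsilon,\nabla}$ for $\nabla_x V(0,\cdot)$ and $\phi_{f_2,\delta_2}$ for $f_2$, replace the matrix-vector product by $\tilde{\times}_{\delta_{\mathrm{prod}}}$, and conclude via the $1$-Lipschitzness of the outer $\min/\max$, Lemma~\ref{lem:ReLU_Matr_vect_prod}, and Theorem~\ref{thm:DNN_approx_value_fct}. The only notable difference is that you insert the clipping $\chi_R$ in front of $\phi_{\varepsilon,\nabla}$ to guarantee that both arguments of $\tilde{\times}_{\delta_{\mathrm{prod}}}$ lie in the bounded box required by Lemma~\ref{lem:ReLU_Matr_vect_prod}; the paper omits this step and instead records the two-sided estimate~\eqref{eq:Lipschitz_min_max}, relying on the outer projection onto $[a_i,b_i]$ to absorb any unbounded behavior. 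Your explicit clipping is a cleaner way to make the application of Lemma~\ref{lem:ReLU_Matr_vect_prod} rigorous, since $\phi_{\varepsilon,\nabla}$ is only controlled in $L^2(Q)$ and has no a~priori $L^\infty$ bound, while the paper's presentation leaves this point implicit.
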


\begin{proof}
 It follows similarly to Lemma~\ref{lem:formula_H} that
\begin{equation*}
    \alpha(0,x)_i
    =
    \min\{ \max\{- (f_2(0,x)^\top \nabla_x V(0,x) )_i /(2\gamma)  , a_i\}, b_i\} 
    \quad 
    i=1,\ldots, \bar{d}
    .
\end{equation*}
Consider the DNNs
\begin{equation*}
    \phi_{\varepsilon,\alpha}(x)_i  
    = 
     \min\{ \max\{- (\tilde{\times}_{\varepsilon_0}(\phi_{f_2,\delta_2}(0,x)^\top, \phi_{\delta,\nabla}(0,x)) )_i /(2\gamma)  , a_i\}, b_i\} 
    \quad 
    i=1,\ldots, \bar{d}.
\end{equation*}
By~\eqref{eq:Lipschitz_min_max}, 
\begin{equation*}
\begin{aligned}
   \left\|\|\alpha(0,\cdot) -  \phi_{\varepsilon,\alpha}\|_2\right\|_{L^2(Q)}
    &\leq 
    \frac{\sqrt{\bar{d}}} {2\gamma}
    \max_{i=1,\ldots,\bar{d}}  
    \min\left\{\max\{\|a\|_\infty, \|b\|_\infty\},   \ldots 
    \vphantom{ \|(f_2(0,x)^\top \nabla_x V(0,x) )_i
    -
    (\tilde{\times}_{\varepsilon_0}(\phi_{f_2,\delta_2}(0,x)^\top, \phi_{\delta,\nabla}(0,x)) )_i\|_{L^2(Q)}}
    \right.
    \\
    &\qquad\left.
    \|(f_2(0,x)^\top \nabla_x V(0,x) )_i
    -
    (\tilde{\times}_{\varepsilon_0}(\phi_{f_2,\delta_2}(0,x)^\top, \phi_{\delta,\nabla}(0,x)) )_i\|_{L^2(Q)}
    \right\}.
    \end{aligned}
\end{equation*}
The assertion now follows by Lemma~\ref{lem:ReLU_Matr_vect_prod}, Theorem~\ref{thm:DNN_approx_value_fct}, 
and the assumed approximation properties of the DNNs $\phi_{f_2,\delta_2}$.
\end{proof}

Theorem~\ref{thm:main_resutl} is now implied by Theorem~\ref{thm:DNN_approx_value_fct}
and Corollary~\ref{cor:DNN_approx_control}, which concludes the theoretical part of this manuscript.

\section{Conclusions}

This paper provides the first DNN approximation results for stochastic optimal control problems that are free of the curse of dimension. In particular our results confirm that current numerical approaches for the solution of high dimensional HJB equations constitute a viable approach. Natural next questions include the generalization of our results to more general stochastic optimal control problems as well as the investigation of possible algorithmic realizations of the convergence rates that our approximation results provide. We leave these for future work.

\begin{appendix}
\section{A priori Estimates}
\label{sec:apriori_est}

We provide quantitative bounds on the value function and the gradient
in the following lemma.
Note that the Hamiltonian $H$ is not globally Lipschitz continuous here. 
So, the given estimates are non-trivial.
\begin{lemma}[Theorem~A.1(iii) of \cite{HJB_MComp_2007}]
\label{lem:bound_grad_V}
Suppose that there exist constants $C^d_1,C^{d,\bar{d}}_2$, $d,\bar{d}\in\bbN$, such that for every  $d,\bar{d}\in\bbN$
and every $x,x'\in\bbR^d$
\begin{equation*}
    \sup_{s\in [0,t_f]}\|f_1(s,x)
    -f_1(s,x')\|_2
    \leq C^d_1\|x-x'\|_2
\end{equation*}
and
\begin{equation*}
    \sup_{s\in [0,t_f]}\|f_2(s,x)
    -f_2(s,x')\|_2
    \leq C^{d,\bar{d}}_2\|x-x'\|_2
    .
\end{equation*}
It holds that
\begin{equation*}
    \sup_{s\in [0,t_f]}\sup_{x\in \bbR^d}|V(s,x)|\leq 
    e^{c_0 t_f }
    \left(
    \sup_{x\in\bbR^d}\|\Psi(x)\|_2
    +t_f \sup_{s\in[0,t_f]} \sup_{x\in\bbR^d}
    \bar{L}(x,s) + \gamma\max\{\|a\|^2_2,\|b\|^2_2\}\right),
\end{equation*}
where $c_0= \sup_{s\in [0,t_f],x\in\bbR^d}\|f^d_1(s,x)\|_2 $,
and
\begin{equation*}
\begin{aligned}
    &\sup_{s\in [0,t_f]}\sup_{x\in \bbR^d}\|\nabla_x V(s,x)\|_2
    \\
    &\leq 
    e^{c_1 t_f}
    \left( 
    \sup_{x\in\bbR^d}\|\nabla_x \psi(x)\|_2
    + t_f \sup_{s\in [0,t_f]}\sup_{x\in \bbR^d}|V(s,x)|
    (\sup_{s\in [0,t_f]}\sup_{x\in\bbR^d}
    \|f_2(s,x)\|_2 + C^{d,\bar{d}}_2)
    \right.
    \\
    &\quad
    \left.
    \vphantom{\sup_{x\in\bbR^d}}
    +
    t_f
    \sup_{s\in [0,t_f]}\sup_{x\in \bbR^d}(
    |\bar{L}(s,x)|+
    \|\nabla_x \bar{L}(s,x)\|_2)+ \gamma\max\{\|a\|_2^2,\|b\|_2^2\}
    \max\{\|a\|_\infty,\|b\|_\infty\} 
    \right)
    ,
\end{aligned}
\end{equation*}
where 
$c_1 = 
1/4 
+ \sup_{s\in [0,t_f]}\sup_{x\in\bbR^d}
[\|f_1(s,x)\|_2 + \|f_2(s,x)\|_2
\max\{\|a\|_\infty,\|b\|_\infty\}]
+
C^d_1$.
\end{lemma}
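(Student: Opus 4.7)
The plan is to follow a standard PDE/BSDE route for a priori bounds on value functions of stochastic optimal control problems, exploiting (a) the Feynman--Kac type stochastic representation already recorded in~\eqref{eq:stoch_repr_HJB} and (b) the explicit expression of $H$ supplied by Lemma~\ref{lem:formula_H}. Note that under Assumption~\ref{ass:lineardynamics}(i) together with the quoted Lipschitz hypotheses, the function $H$ has a concrete decomposition $H(t,x,p) = \bar L(t,x) + p^{\top} f_1(t,x) + \min_{v\in U}\bigl(p^{\top} f_2(t,x)v + \gamma\|v\|_2^2\bigr)$, in which the last summand is pointwise bounded in absolute value by $\gamma\max\{\|a\|_2^2,\|b\|_2^2\}$ (since the minimiser is the projection of the unconstrained optimum onto the box $U$ and hence has $\|v\|_\infty \le \max\{\|a\|_\infty,\|b\|_\infty\}$).

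For the $\sup|V|$ bound I would plug this decomposition into~\eqref{eq:stoch_repr_HJB}, take absolute values, and integrate in $t$ to obtain, for $w(s):=\sup_{x\in\bbR^d}|V(s,x)|$, an inequality of the schematic form
\begin{equation*}
w(s) \le \sup_x|\Psi(x)| + (t_f-s)\Bigl(\sup_{t,x}|\bar L(t,x)|+\gamma\max\{\|a\|_2^2,\|b\|_2^2\}\Bigr) + c_0\int_s^{t_f} w(t)\,\dd t,
\end{equation*}
where the $c_0\int w$ term comes from $|p^{\top}f_1|\le c_0\|p\|_\infty$ combined with the a priori relation between $\|\nabla_x V(t,\cdot)\|_\infty$ and $\sup|V|$ encoded in the PDE (alternatively, via a bounded-drift Girsanov change of measure the linear-in-$p$ term can be absorbed into the Brownian expectation, which is the cleaner route and the one I would use). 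The Gronwall lemma then yields the announced factor $e^{c_0 t_f}$.

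For the gradient bound I would differentiate~\eqref{eq:HJB_PDE} in $x$, obtaining a semilinear equation for each $\partial_{x_i}V$ whose nonlinearity decomposes, via Lemma~\ref{lem:formula_H}, into (i) a term $(\nabla_p H)^{\top} D_x^2 V = (f_1 + f_2\bar u)^{\top} D_x^2 V$, which after a stochastic representation contributes to the exponential factor the constant $\sup\|f_1\|_2 + \sup\|f_2\|_2\max\{\|a\|_\infty,\|b\|_\infty\}$, and (ii) an $\nabla_x H|_{p=\nabla V}$ part involving the Lipschitz constants $C_1^d, C_2^{d,\bar d}$ of $f_1,f_2$ times $\|\nabla_x V\|$ and a lower-order inhomogeneity $\|\nabla_x\bar L\|$. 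Applying Young's inequality to split $\|\nabla V\|\cdot\|\nabla V\|$ into $\tfrac14\|\nabla V\|^2+\|\nabla V\|^2$ gives the $\tfrac14$ appearing in $c_1$, and Gronwall on $t\mapsto \sup_x\|\nabla_x V(t,x)\|_2$ closes the estimate, with the inhomogeneity producing the factor $\sup\|\nabla\Psi\|_2 + t_f\sup|V|(\sup\|f_2\|_2+C_2^{d,\bar d}) + t_f\sup(|\bar L|+\|\nabla\bar L\|_2) + \gamma\max\{\|a\|_2^2,\|b\|_2^2\}\max\{\|a\|_\infty,\|b\|_\infty\}$ exactly as asserted.

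The main obstacle is the non-smoothness of $\bar u$ at the boundary of $U$: the projection introduces points where $\bar u$ is only Lipschitz, so the formal differentiations above must be carried out with weak derivatives and a careful tracking of the Jacobian $D_x\bar u$, which vanishes on the active set and equals $-\tfrac{1}{2\gamma}D_x(f_2^{\top}p)$ on the interior (exactly as in the proof of Lemma~\ref{lem:Lipschitz_H}). A standard mollification of $\bar u$ and passage to the limit handles this cleanly, after which Gronwall delivers the bound. Since the conclusion is stated as a direct quotation of~\cite[Theorem~A.1(iii)]{HJB_MComp_2007}, this is the route I would adopt, with all steps reducing to the structural properties already used in Lemmas~\ref{lem:formula_H} and~\ref{lem:Lipschitz_H}.
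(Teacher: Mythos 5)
The paper does not prove this lemma at all: the bracketed attribution ``[Theorem~A.1(iii) of \cite{HJB_MComp_2007}]'' indicates that the estimate is cited verbatim from Barles and Jakobsen, and no argument follows the lemma statement in the appendix. So there is nothing to compare your proof against inside this paper; instead let me assess your proposal on its own.

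Your decomposition of $H$ is the right starting point, but it contains a concrete error that propagates through the first half. You assert that the summand $\min_{v\in U}\bigl(p^\top f_2(t,x)v+\gamma\|v\|_2^2\bigr)$ is bounded in absolute value by $\gamma\max\{\|a\|_2^2,\|b\|_2^2\}$, citing that the minimiser $\bar v$ is the projection onto the box. That argument only controls the quadratic piece $\gamma\|\bar v\|_2^2$; it ignores the term $p^\top f_2(t,x)\bar v$, which for $\|p\|_2$ large places $\bar v$ at a corner of $U$ and makes the minimum grow linearly in $\|p\|_2$, of order $\|p\|_2\sup\|f_2\|_2\max\{\|a\|_2,\|b\|_2\}$. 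So the inhomogeneity in your Gronwall inequality is not as small as you claim, and the linear-in-$p$ part cannot be handled by Girsanov without also incorporating $f_2\bar v$ into the drift (which would then change your exponent $c_0$ from $\sup\|f_1\|_2$ to $\sup(\|f_1\|_2+\|f_2\|_2\max\{\|a\|_\infty,\|b\|_\infty\})$, overshooting the stated $c_0$). The cleaner route for the $\sup|V|$ bound avoids the Feynman--Kac representation and Gronwall entirely: use the defining identity $V(s,x)=\inf_u J(s,x;u)$ from \eqref{eq:value_fctn}, note that $|J(s,x;u)|\le\sup|\Psi|+(t_f-s)\bigl(\sup|\bar L|+\gamma\max\{\|a\|_2^2,\|b\|_2^2\}\bigr)$ for every admissible $u$ because $u(t)\in U$ almost surely, and observe that the stated exponential factor is then a harmless enlargement. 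Your passage through \eqref{eq:stoch_repr_HJB} reintroduces $H$ and hence $\nabla_x V$, which creates exactly the circularity you then have to fight.

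For the gradient bound, differentiating the PDE and closing with Gronwall is plausible in spirit, but the passage from ``Young's inequality to split $\|\nabla V\|\cdot\|\nabla V\|$ into $\tfrac14\|\nabla V\|^2+\|\nabla V\|^2$'' is not a valid identity, so the claimed origin of the $1/4$ in $c_1$ is not justified as written; it should be traced to the second-order term $\tfrac12\Delta$ in the stochastic representation of the linearised equation, not to an a posteriori Young split. You also correctly flag that $\bar u$ is only Lipschitz, but mollification alone does not close the argument: one must check that the formal terms involving $D_x\bar u$ on the active set vanish and that the limiting Gronwall constant is exactly $c_1$, which is where the Lipschitz constants $C_1^d$, $C_2^{d,\bar d}$ and the supremum $\sup\|f_2\|_2\max\{\|a\|_\infty,\|b\|_\infty\}$ enter; this bookkeeping is left implicit. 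In short, the skeleton is reasonable, but as written there are two genuine gaps: the unjustified boundedness of the control-dependent part of $H$, and the unsubstantiated derivation of the precise constant $c_1$.
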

We state an immediate corollary of this lemma. 
\begin{corollary}\label{cor:bound_grad_V}
Let the assumptions of Lemma~\ref{lem:bound_grad_V}
be satisfied.
Suppose that there exist $C>0$ and $\kappa_1,\kappa_2>0$
such that for every $d,\bar{d}\in \bbN$,
\begin{equation*}
    \sup_{s\in [0,t_f]}\sup_{x\in\bbR^d}
    \|f_1(s,x)\|_2 
    + 
    \|f_2(s,x)\|_2 
    +C_1^d
     \leq 
     C 
\end{equation*}
and 
\begin{equation*}
    \sup_{x\in\bbR^d}|\Psi(x)|
    +
    \sup_{x\in\bbR^d}\|\nabla_x\Psi(x)\|_2
    +
    \sup_{s\in [0,t_f]}\sup_{x\in\bbR^d}
    (\bar{L}(s,x)
    +
    \|\nabla_x \bar{L}(s,x)\|_2)
    +
    C_2^{d,\bar{d}}
    \leq 
    C d^{\kappa_1} \bar{d}^{\kappa_2} 
    .
\end{equation*}
Then, there exist $\bar{\kappa},\kappa_3,\kappa_4>0$ and $C'>0 $
such that for every $d,\bar{d}\in\bbN$ 
\begin{equation*}
    \sup_{s\in [0,t_f]}\sup_{x\in\bbR^d}
    \|\nabla_x V(s,x)\|_2
    \leq 
    C' e^{\bar{\kappa} t_f}
    d^{\kappa_3} \bar{d}^{\kappa_4} 
    .
\end{equation*}
\end{corollary}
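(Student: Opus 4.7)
The plan is to obtain the corollary as a direct bookkeeping consequence of Lemma~\ref{lem:bound_grad_V}, by substituting the uniform polynomial hypotheses on the data and carefully tracking the resulting powers of $d$ and $\bar{d}$. I would organize the argument in two passes: first pin down the bound on $\sup_{s,x}|V(s,x)|$ implied by the first assertion of Lemma~\ref{lem:bound_grad_V}, then feed that bound into the second assertion.

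For the value function itself, I observe that the exponent $c_0 = \sup_{s,x}\|f_1(s,x)\|_2$ is controlled by a constant independent of $d$ and $\bar{d}$ by hypothesis. The terminal cost and running cost contribute $\sup\|\Psi\|_2, \sup\bar{L} \lesssim d^{\kappa_1}\bar{d}^{\kappa_2}$. For the quadratic control penalty I use that $U\subset\bbR^{\bar{d}}$ with $\|a\|_\infty, \|b\|_\infty$ uniformly bounded (an implicit consequence of Assumption~\ref{ass:lineardynamics}), so $\gamma\max\{\|a\|_2^2,\|b\|_2^2\} \lesssim \bar{d}$. Combining these in the first estimate of Lemma~\ref{lem:bound_grad_V} yields
\begin{equation*}
\sup_{s\in[0,t_f]}\sup_{x\in\bbR^d} |V(s,x)| \;\lesssim\; e^{Ct_f}\, d^{\kappa_1}\bar{d}^{\max\{\kappa_2,1\}}.
\end{equation*}

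For the gradient estimate, the key observation is that the exponent $c_1 = 1/4 + \sup_{s,x}(\|f_1(s,x)\|_2 + \|f_2(s,x)\|_2\max\{\|a\|_\infty,\|b\|_\infty\}) + C_1^d$ is again controlled by a dimension-free constant $\bar{\kappa}$, since each summand is uniformly bounded by hypothesis. The remaining prefactor in Lemma~\ref{lem:bound_grad_V} is a sum of products of the polynomially bounded quantities $\sup\|\nabla\Psi\|_2$, $\sup|V|\cdot(\sup\|f_2\|_2 + C_2^{d,\bar{d}})$, $\sup(|\bar L|+\|\nabla_x\bar L\|_2)$, and $\gamma\max\{\|a\|_2^2,\|b\|_2^2\}\max\{\|a\|_\infty,\|b\|_\infty\}$. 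Inserting the bound on $\sup|V|$ from the first step and the hypothesized polynomial growth of $C_2^{d,\bar{d}}$, $\Psi$, and $\bar{L}$, the dominant contribution is $t_f\sup|V|\cdot C_2^{d,\bar{d}} \lesssim t_f\, e^{Ct_f}\, d^{2\kappa_1}\bar{d}^{\max\{\kappa_2,1\}+\kappa_2}$, and absorbing all other terms into one product gives the stated form $C'e^{\bar{\kappa}t_f}d^{\kappa_3}\bar{d}^{\kappa_4}$.

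There is no real obstacle here beyond careful bookkeeping of the exponents; the main thing to check is that $c_1$ picks up no $d,\bar{d}$ dependence, which follows from the uniform bounds on $\|f_1\|_2$, $\|f_2\|_2$, $C_1^d$, and $\|a\|_\infty, \|b\|_\infty$. One can then set $\kappa_3 = 2\kappa_1$ and $\kappa_4 = 2\max\{\kappa_2,1\}$ (with room to spare) and choose $\bar{\kappa}$ as the implicit constant above.
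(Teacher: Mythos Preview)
Your proposal is correct and matches the paper's approach: the paper provides no explicit proof and treats the corollary as an immediate consequence of Lemma~\ref{lem:bound_grad_V}, and your argument is precisely the bookkeeping one has to do to justify that implication. The only point worth noting is that the dimension-independence of $\|a\|_\infty,\|b\|_\infty$ is not listed among the hypotheses of the corollary itself but is inherited from the ambient Assumption~\ref{ass:lineardynamics}, which you correctly flag.
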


\section{Classical solutions to certain quasi-linear parabolic equations}

We consider the following quasi-linear parabolic equation
\begin{equation}\label{eq:appendix_semilinear_PDE}
    \partial_t W(t,x) + \frac{1}{2}\Delta W(t,x) 
    + G(t,x,\nabla_x W(t,x)) = 0 \text{ on } [0,t_f)\times \bbR^d,
    \quad
    W(t_f,x) = \Phi(x) \text{ on } \bbR^d .
\end{equation}
We assume that $G$ is globally Lipschitz continuous and suppose that 
$\Phi \in C^2(\bbR^d)$ is at most linearly growing.

\begin{proposition}\label{prop:appendix_classical_sol}
 There is a unique solution 
 $W$ to~\eqref{eq:appendix_semilinear_PDE} and 
 $W\in C^{1,2}([0,t_f]\times \bbR^d)$.
\end{proposition}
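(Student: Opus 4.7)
The plan is to view \eqref{eq:appendix_semilinear_PDE} as a backward semilinear parabolic equation with globally Lipschitz nonlinearity and to construct the solution via a Picard iteration built on a Feynman--Kac / Duhamel representation, then bootstrap regularity to $C^{1,2}$.

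First I would reverse time by setting $u(t,x) := W(t_f - t, x)$ so that $u$ solves the forward problem $\partial_t u = \tfrac{1}{2}\Delta u + \widetilde{G}(t,x,\nabla_x u)$ with $\widetilde{G}(t,\cdot,\cdot) := G(t_f - t,\cdot,\cdot)$ and initial datum $u(0,\cdot)=\Phi$. Its mild formulation reads
\begin{equation*}
u(t,x) = \bbE(\Phi(x+W_t)) + \int_0^t \bbE\bigl(\widetilde{G}(s, x+W_{t-s}, \nabla_x u(s, x+W_{t-s}))\bigr)\, {\rm d}s.
\end{equation*}
Setting $u_0(t,x):=\bbE(\Phi(x+W_t))$ and iterating the right-hand side, the global Lipschitz property of $G$ in $(x,p)$ together with the Gaussian gradient estimate $|\nabla_x \bbE(\varphi(x+W_t))| \leq C t^{-1/2}\|\varphi\|_\infty$ (adapted to linearly growing $\varphi$ via weighted norms) yields contraction in a time-weighted supremum norm over functions with at most linear growth in $x$ and continuous, locally bounded spatial gradient. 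This produces a unique mild solution $u$, with $\nabla_x u_k \to \nabla_x u$ in the same norm.

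Second, I would bootstrap regularity to $C^{1,2}$. Since $\Phi\in C^2(\bbR^d)$ with at most linear growth, differentiation under the Gaussian expectation gives $u_0 \in C^{1,2}([0,t_f]\times \bbR^d)$. Proceeding inductively, the source term $\widetilde{G}(s,x,\nabla_x u_k(s,x))$ inherits Lipschitz-in-$x$ continuity, which embeds locally into $C^{\gamma}$ for any $\gamma \in (0,1)$, so parabolic Schauder estimates applied to $u_{k+1}$ produce uniform local $C^{1+\gamma/2,2+\gamma}$ bounds. Passing to the limit $k\to\infty$ with local uniform convergence of derivatives up to order two in $x$ and one in $t$ yields $u\in C^{1,2}$ satisfying the PDE in the classical sense, so that $W(t,x):=u(t_f - t, x)$ solves \eqref{eq:appendix_semilinear_PDE}. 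Uniqueness follows by applying Gronwall to the mild formulation of the difference of two $C^{1,2}$ solutions sharing the terminal datum, using the Lipschitz property of $G$ in its arguments.

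The main obstacle will be the regularity upgrade in the third step: since $G$ is only assumed Lipschitz rather than $C^1$, the composed source $\widetilde{G}(s,x,\nabla_x u_k(s,x))$ does not a priori improve beyond Lipschitz in $x$, so classical Schauder theory has to be invoked through the Hölder embedding just mentioned, and one must carefully check that the resulting a priori bounds are uniform in $k$ on compact subsets of $[0,t_f]\times \bbR^d$ despite the unboundedness of the domain. As an alternative, I would invoke the probabilistic BSDE representation of Pardoux--Peng together with a four-step-scheme argument, under which global Lipschitz nonlinearity and $C^2$ terminal data with linear growth are precisely the hypotheses known to guarantee existence of a unique classical $C^{1,2}$ solution of the associated semilinear PDE.
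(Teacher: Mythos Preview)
Your Picard/mild-solution construction for existence and uniqueness is reasonable and, in spirit, parallel to the Pardoux--Tang BSDE result the paper invokes to obtain a unique viscosity solution $W$ that is globally Lipschitz in $x$. The divergence is in the regularity upgrade, and there your argument has a genuine gap.

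You claim that the source $\widetilde G(s,x,\nabla_x u_k(s,x))$ ``inherits Lipschitz-in-$x$ continuity'' and then invoke Schauder. But since $G$ is only Lipschitz, this composition is Lipschitz in $x$ only if $\nabla_x u_k$ is itself Lipschitz in $x$, i.e.\ only if you already control second spatial derivatives of $u_k$. This is not available at the outset: the hypothesis $\Phi\in C^2(\bbR^d)$ with linear growth does not bound $\nabla^2\Phi$, so even for $u_0(t,x)=\bbE(\Phi(x+W_t))$ the gradient need not be Lipschitz. More to the point, after your fixed-point step you only know $\nabla_x u\in L^\infty_{\rm loc}$, hence the frozen source is merely $L^\infty$, not H\"older, and Schauder theory does not apply directly. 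Your acknowledged obstacle (uniform-in-$k$ Schauder bounds) is a symptom of this, and the BSDE alternative you mention does not close the gap either: under Lipschitz $G$ the Pardoux--Peng theory yields viscosity solutions, not $C^{1,2}$ regularity.

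The paper closes exactly this gap by inserting an $L^p$ step before Schauder. Having $W$ Lipschitz in $x$ (from Pardoux--Tang), it localises with a cutoff $\chi$ and views $\chi W$ as solving a \emph{linear} parabolic equation on a ball with right-hand side $F=G(\cdot,\cdot,\nabla_x(\chi W))\in L^\infty$. Lieberman's $L^p$ interior estimates then give $\tilde w,\partial_{x_j}\tilde w\in W^{1,p}$ for all $p<\infty$, and Sobolev embedding upgrades $\nabla_x W$ to $C^\alpha$ on compacts. Only now is $F\in C^\alpha$, and a mollification/limit argument using interior H\"older estimates (again Lieberman) yields $W\in C^{1,2}$ locally. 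The essential idea you are missing is this $L^\infty\to W^{2,p}\to C^\alpha$ bridge that turns a merely bounded source into a H\"older one before Schauder can be used.
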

\begin{proof}
We begin this proof by showing that there exists a unique viscosity solution $W$ to 
\eqref{eq:appendix_semilinear_PDE} which is Lipschitz continuous with respect to $x$. Then, by a bootstrap argument we establish the claimed regularity of $W$.

Since $G$ is globally Lipschitz continuous and $\Phi$ is at most linearly growing,
\cite[Theorem~5.1]{PardouxTang_1999} implies that there exists a unique viscosity solution $W$ 
to~\eqref{eq:appendix_semilinear_PDE}.
The second estimate in~\cite[Equation~(4.22)]{PardouxTang_1999}
implies that $W$ is Lipschitz continuous with respect to $x$; here $W(t,x) = Y^{t,x}(t)$ in the notation of~\cite{PardouxTang_1999}.

 The Lipschitz continuity of $W(t,\cdot)$ 
 implies that for every $t\in [0,t_f]$, 
 $\nabla_x W(t , \cdot)$ has essentially bounded entries.
 Let $x\in \bbR^d$ and $s\in [0,t_f]$ be arbitrary and $r>0$. 
 Let $\chi$ be a smooth function such that 
 $\chi:B_{4r}(x) \to \bbR$,
 $\chi\vert_{B_{r}(x)} =1$ and $\chi $ is compactly supported in $B_{2r}(x)$. 
 Note that 
 for any $x'\in\bbR^d$ and any $r'>0$ we denote 
 the Euclidean ball with center $x'$ and radius $r'$
 by $B_{r'}(x'):=\{x''\in\bbR^d : \|x'' - x'\|_2 < r'\}$.
 Consider the linear parabolic PDE
 on $[0,t_f)\times B_{4r}(x)$
 \begin{equation}\label{eq:appendix_local_PDE}
     \partial_t \tilde{w}  + \frac{1}{2}\Delta \tilde{w} = F, 
     \quad \tilde{w}\vert_{[0,t_f) \times \partial B_{4r}(x)} = 0, 
     \quad \tilde{w}(t_f,\cdot) = \chi\Phi, 
 \end{equation}
 where $F(t,x') = G(t,x,\nabla_x (\chi(x') W(t,x')))$ for every $t\in [0,t_f]$, $x'\in \bbR^d$.
 Note that $F\in L^\infty([0,t_f] \times B_{4r}(x) )$.
 It is apparent that $\chi W\vert_{[0,t_f] \times B_{4r}(x)}$ is a solution 
 to~\eqref{eq:appendix_local_PDE}
 and $\chi W\vert_{[0,t_f] \times B_{r}(x)} = W\vert_{[0,t_f] \times B_{r}(x)}$.
 
 According to~\cite[Theorem~6.6]{Lieberman}
 and~\cite[Theorem~7.14]{Lieberman}, it holds that $\tilde{w}, \partial_{x_j} \tilde{w}  \in W^{1,p}(\mathcal{D})$
 for any compactly included $\mathcal{D}\subset\subset [0,t_f]\times B_{4r}(x)$ 
 and every $p\in [2,\infty)$ and $j=1,\ldots,d$ such that $\{(s,x)\}\subset\subset \mathcal{D}$.
 Let $s_1,s_2$ satisfy $0<s_1<s<s_2<t_f$.
 Choose $\mathcal{D} = [s_1, s_2)\times B_{3r}(x)$.
 By the Sobolev embedding theorem 
 \cite[Theorem 1.107]{Triebel_Fct_Sp_III},
 %\todo{[add standard ref]}, 
 $\tilde{w}, \partial_{x_j} \tilde{w}  \in C^{\alpha}(\overline{\mathcal{D}})$
 for any $\alpha\in [0,1)$
 and $j=1,\ldots, d$.
 By the continuity of the value function $W$ and uniqueness of the solution to~\eqref{eq:appendix_local_PDE},
 it holds that $\chi W\vert_{\mathcal{D}} = \tilde{w}\vert_{\mathcal{D}}$ and therefore $F\vert_{\overline{\mathcal{D}}} \in C^{\alpha}(\overline{\mathcal{D}})$.
 Now the regularity theory on interior H\"older estimates for parabolic PDEs becomes applicable.
 In particular, we consider the PDE in~\eqref{eq:appendix_local_PDE} 
 on the space-time domain $\mathcal{D}$ and also recall that $\{(s,x)\}\subset\subset \mathcal{D}$.
 Let us construct a sequence of solutions by mollifying the right hand side $F \in C^{\alpha}(\overline{\mathcal{D}})$
 to obtain $F_k$, $k\in \bbN$, such that $F_k \to F $ in the $C^{\bar{\alpha}}(\overline{\mathcal{D}})$-norm for any $\bar{\alpha}\in (0,\alpha)$ and $F_k$ is smooth
 for every $k\in \bbN$.
 Let us denote the associated solutions by $\tilde{w}_k$ which solve the parabolic PDE 
 on $[s_1,s_2)\times B_{3r}(x)$
  \begin{equation*}
     \partial_t \tilde{w}_k  + \frac{1}{2}\Delta \tilde{w}_k = F_k, 
     \quad \tilde{w}_k\vert_{[s_1,s_2) \times \partial B_{3r}(x)} = 0, 
     \quad \tilde{w}(s_2,\cdot) = g_k(s_2,\cdot), 
 \end{equation*}
 where $g_k$, $k\in \bbN$, is smooth and converges to $\chi W\vert_{[s_1,s_2]\times\overline{B_{3r}(x)}}$
 in the $C^0([s_1,s_2]\times\overline{B_{3r}(x)})$-norm.
 By a classical result, e.g.~\cite[Theorem~7.7]{EvansPDEs}, the solutions $\tilde{w}_k$ are smooth for every $k\in \bbN$.
 The regularity estimate in \cite[Theorem~4.9]{Lieberman} implies that
 $w_k$ is a Cauchy sequence in a space-time H\"older space.
 Particularly, for any subdomain $\mathcal{D}'$ such that 
 $\{(s,x)\} \subset\subset \mathcal{D}' \subset \subset [s_1, s_2)\times B_{3r}(x)$
 there exists a constant $C>0$ such that for every $k,k'\in \bbN$, 
 \begin{equation*}
 \begin{aligned}
     &\|\tilde{w}_k - \tilde{w}_{k'}\|_{C^{1+\alpha/2}(\overline{\mathcal{D}'})}
     +
     \max_{i=1,\ldots,d}\{\|\partial_{x_i}\tilde{w}_k \|_{C^{1+\bar{\alpha}/2}(\overline{\mathcal{D}'})}\}
     \\
     &\qquad 
     \leq 
     C
     (\|\tilde{w}_k - \tilde{w}_{k'}\|_{C^0([s_1,s_2]\times\overline{B_{3r}(x)}))} + \|F_k - F_{k'}\|_{C^{\bar{\alpha}}([s_1,s_2]\times\overline{B_{3r}(x)})})
 .
 \end{aligned}
 \end{equation*}
 The term $\|\tilde{w}_k - \tilde{w}_{k'}\|_{C^0([s_1,s_2]\times\overline{B_{3r}(x)}))}$ converges to zero as $k,k' \to \infty$
 by~\cite[Theorem~2.10]{Lieberman}, 
 where we used the linearity of the PDE and the fact that 
 for every $k,k'\in\bbN$,
 \begin{equation*}
 \|g_k - g_{k'}\|_{C^0([s_1,s_2]\times \partial B_{3r}(x)\cup \{s_2\}\times B_{3r}(x) ) }
 \leq
 \|g_k - g_{k'}\|_{C^0([s_1,s_2]\times\overline{B_{3r}(x)})}
 .
 \end{equation*}
 Hence, the limit of $\tilde{w}_k$ exists and is in $C^{1,2}(\overline{\mathcal{D}'})$.
 Choose $\mathcal{D}' := [\bar{s}_1,\bar{s}_2) \times B_{2r}(x)$,
 where $\bar{s}_1,\bar{s}_2$ satisfy $s_1<\bar{s}_1<s<\bar{s}_2<s_2$.
 Since the convergence is also uniform, the limit $\bar{w}:=\lim_{k\to\infty} \tilde{w}_k$
 solves 
 \begin{equation*}
     \partial_t \bar{w}  + \frac{1}{2}\Delta \bar{w} = F, 
     \quad \bar{w}\vert_{[\bar{s}_1,\bar{s}_2) \times \partial B_{2r}(x)} = 0, 
     \quad \bar{w}(\bar{s}_2,\cdot) = \chi W(\bar{s}_2,\cdot)\vert_{B_{2r}(x)}, 
 \end{equation*}
 which is also solved by $\chi W\vert_{[s_1,s_2]\times \overline{B_{2r}(x)}}$. 
 By uniqueness it follows that $\bar{w}=\chi W\vert_{[s_1,s_2]\times \overline{B_{2r}(x)}}$.
 Since $\chi\vert_{B_{r}(x)}=1$ and $\bar{s}_1<s<\bar{s}_2$, it follows that 
 $W$ is of class $C^{1,2}$ at the point $(s,x)$. 
 Since $(s,x)\in [0,t_f)\times \bbR^d$ 
 was arbitrary and since $W(t_f,\cdot)=\Phi \in C^2(\bbR^d)$ by assumption, 
 the assertion of this proposition is proved.
\end{proof}

\end{appendix}

%\bibliography{bibliography}

\end{document}